\theoremstyle{plain}
\newtheorem{theorem}{Theorem}[section]
\newtheorem{conjecture}[theorem]{Conjecture}
\newtheorem{corollary}[theorem]{Corollary}
\newtheorem{lemma}[theorem]{Lemma}
\newtheorem{proposition}[theorem]{Proposition}
\theoremstyle{definition}
\newtheorem{definition}[theorem]{Definition}
\newtheorem{algorithm}[theorem]{Algorithm}
\newtheorem{example}[theorem]{Example}
\newtheorem{remark}[theorem]{Remark}
\newcommand{\rr}{\mathbb{R}}
\newcommand{\calD}{\mathcal{D}}
\newcommand{\calP}{\mathcal{P}}
\newcommand{\calR}{\mathcal{R}}
\newcommand{\calS}{\mathcal{S}}
\newcommand{\Le}{\reflectbox{L}}
\title{Combinatorics of the Deodhar decomposition of the Grassmannian}
\author{Cameron Marcott}
\date{}
\begin{document}
\maketitle

\begin{abstract}{\small{
{\textbf{Abstract:}} The Deodhar decomposition of the Grassmannian is a refinement of the Schubert, Richardson, and positroid stratifications of the Grassmannian. Go-diagrams are certain fillings of Ferrers diagrams with black stones, white stones, and pluses which index Deodhar components in the Grassmannian.

We provide a series of corrective flips on diagrams which may be used to transform arbitrary fillings of Ferrers shapes with black stones, white stones, and pluses into a Go-diagram. This provides an extension of Lam and Williams' Le-moves for transforming reduced diagrams into Le-diagrams to the context of non-reduced diagrams.

Next, we address the question of describing when the closure of one Deodhar component is contained in the closure of another. We show that if one Go-diagram $D$ is obtained from another $D'$ by replacing certain stones with pluses, then applying corrective flips, that there is a containment of closures of the associated Deodhar components, $\overline{\calD'} \subset \overline{\calD}$.

Finally, we address the question of verifying whether an arbitrary filling of a Ferrers shape with black stones, white stones, and pluses is a Go-diagram. We show that no reasonable description of the class of Go-diagrams in terms of forbidden subdiagrams can exist by providing an injection from the set of valid Go-diagrams to the set of minimal forbidden subdiagrams for the class of Go-diagrams. In lieu of such a description, we offer an inductive characterization of the class of Go-diagrams.}}
\end{abstract}

\section{Introduction} \label{sec:intro}

The Deodhar decomposition of a flag manifold was introduced in \cite{deodhar:geometric_aspects} with the purpose of computing Kazhdan-Lusztig $R$-polynomials. Associated to each pair of permutations $u \leq v$ in Bruhat order is a Richardson cell in the flag manifold. Components in the Deodhar decomposition are indexed by certain subexpressions ${\mathbf{u}}$ for $u$ of an expression ${\mathbf{v}}$ for $v$ in the Coxeter generators. The Deodhar decomposition refines the Richardson decomposition, with the Richardson cell indexed by $u \leq v$ being the disjoint union of Deodhar components indexed by ${\mathbf{u}} \prec {\mathbf{v}}$. Deodhar components are homeomorphic to products of tori and affine spaces, and an explicit parameterization of Deodhar components is given in \cite{marsh:parametrizations}.

A Deodhar component in the Grassmannian is the projection of Deodhar component from the flag manifold to the Grassmannian. A positroid in the Grassmannian is the projection of a Richardson cell from the flag manifold to the Grassmannian. So, the Deodhar decomposition refines the positroid decomposition. In fact, \cite{marsh:parametrizations} shows that the Deodhar decomposition agrees with the positroid decomposition when restricted to the positive part of the Grassmannian and refines it away from the positive Grassmannian.

From any point in $A$ the Grassmannian, one may construct a soliton solution $u_A(t,x,y)$ to the KP differential equation. One may construct a contour plot of this soliton, and \cite{kodama:deodhar_decomposition} shows that when $t \ll 0$ this contour plot depends only on which Deodhar component $A$ lies in. In developing this theory, Kodama and Williams introduce Go-diagrams, certain fillings of a Ferrers shape with black stones, white stones, and pluses which index Deodhar components in the Grassmannian. When a Deodhar component intersects the positive Grassmannian, its Go-diagram is exactly the \Le-diagram indexing the positroid cell it agrees with. In \cite{lam:total_positivity}, a set of ``\Le-moves" is given which may be used to transform any reduced diagram into a \Le-diagram.

In this paper, we address the problems:

\begin{itemize}
\item[(1)] Provide a set of local moves which may be used to transform any, not necessarily reduced, diagram into a Go-diagram.
\item[(2)] Describe the boundary structure of Deodhar components in the Grassmannian.
\item[(3)] Given an arbitrary filling of a Ferrers shape with black stones, white stones, and pluses, provide a test for whether this diagram is a Go-diagram.
\end{itemize}

Section \ref{sec:corrective_flips} answers question (1), describing a set of {\textit{corrective flips}} which may be used to transform any diagram into a Go-diagram. Unlike the \Le-moves of \cite{lam:total_positivity}, it is possible to obtain more than one Go-diagram for a fixed starting diagram via corrective flips.

Section \ref{sec:boundaries} addresses question (2). In general, one does not expect questions of this form to have a reasonable answer. The Deodhar decomposition is known to not even be a stratification of the flag manifold, \cite{dudas:not_a_stratification}. However, Theorem \ref{thm:diagrammatic_boundaries} provides an intriguing class of boundaries, showing there is a containment of closures of Deodhar components $\overline{\calD'} \subset \overline{\calD}$ when the associated Go-diagrams $D'$ and $D$ are related by degenerating certain stones to pluses, then performing corrective flips. We conjecture that this theorem in fact provides a complete characterization of when there is a containment of closures of Deodhar components within a Schubert cell. Other aspects of the boundary structure of Deodhar components are explored in \cite{agarwala:nonorientable}.

Section \ref{sec:classification} addresses question (3). Ideally, one would like a description of Go-diagrams in terms of forbidden subdiagrams, analogous to the description of \Le-diagrams. We show that a reasonable description of this form cannot exist, by providing an injection from the set of valid Go-diagrams to the set of ``minimal forbidden subdiagrams" in Theorem \ref{thm:sad_injection}. So, the task of providing a list of forbidden subdiagrams for the class of Go-diagrams is at least as hard as providing a list of all valid Go-diagrams. In lieu of such a description, Theorem \ref{thm:characterization} provides an inductive characterization of Go-diagrams.

\section{Background and notation} \label{sec:background}

\subsection{The symmetric group}

Let $s_i$ denote the adjacent transposition $(i,i+1)$ in the symmetric group $\mathfrak{S}_n$. Italicized lowercase letters, $v$, will denote permutations and bold faced letters, ${\mathbf{v}}$, will denote specific expressions of permutations in the $s_i$'s. A {\textit{subexpression}} of $\mathbf{v}$ is a permutation obtained by replacing some of the factors in ${\mathbf{v}}$ by $\varepsilon$, the identity permutation in $\mathfrak{S}_n$. The terms ``expression" and ``word" will be used interchangeably.

Given an expression ${\mathbf{v}} = v_1 v_2 \cdots v_m$, let $v_{(i)} = v_1 v_2 \cdots v_{i}$ denote the product of the initial $i$ factors of ${\mathbf{v}}$. So, $v_{(0)} = \varepsilon$ and $v_{(m)} = v$. 

The {\textit{length}} of a permutation, $\ell(v)$, is the minimum number of letters in an expression of $v$. A word is {\textit{reduced}} if $\ell(v_{(i+1)}) = \ell(v_{(i)}) + 1$ for every $i$. All reduced words for a permutation contain the same number of factors. The {\textit{Bruhat order}} on permutations is the order given by setting $u \leq v$ if and only if some reduced word for $u$ is a subword of some reduced word for $v$.

A subexpression $\textbf{u}$ of $\textbf{v}$ is {\textit{distinguished}} if whenever $\ell(u_{(i)}v_{i+1}) < \ell(u_{(i)})$ , one also has $u_{i+1} = v_{i+1}$, (i.e. $u_{i+1} \neq \varepsilon$). Write $\textbf{u} \prec \textbf{v}$ if $\textbf{u}$ is a distinguished subexpression of $\textbf{v}$. The subexpression ${\textbf{u}}$ of ${\textbf{v}}$ is {\textit{positive}} if $\ell(u_{(i+1)}) \geq \ell(u_{(i)})$ for all $i$.

\begin{example}
Let ${\mathbf{v}} = s_1 s_2 s_1 s_3 s_2 s_1$. Then,
\begin{displaymath}
\varepsilon \varepsilon \varepsilon \varepsilon \varepsilon \varepsilon,
\mbox{ } s_1 \varepsilon s_1 \varepsilon \varepsilon \varepsilon,
\mbox{ and } s_1 \varepsilon \varepsilon \varepsilon \varepsilon s_1
\end{displaymath}
\noindent
are three subexpressions for the identity permutation in ${\mathbf{v}}$. The first is positive and distinguished, the second is distinguished but not positive, and the third is neither positive nor distinguished.
\end{example}

\begin{lemma}[Lemma 3.5 in \cite{marsh:parametrizations}] \label{lem:unique_positive_expression}
Let $u \leq v$ be permutations and $\textbf{v}$ be a reduced expression for $v$. Then, there is a unique positive distinguished subexpression for $u$ in $\textbf{v}$.
\end{lemma}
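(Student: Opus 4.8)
The plan is to prove existence and uniqueness of the positive distinguished subexpression (PDS) for $u$ in $\mathbf{v}$ by induction on the length $m$ of the expression $\mathbf{v}$, building the subexpression from the right-hand end. More precisely, since $\mathbf{v} = v_1 v_2 \cdots v_m$ is reduced, write $\mathbf{v}' = v_1 \cdots v_{m-1}$, a reduced expression for $v' := v_{(m-1)} = v s_{i_m}$ where $v_m = s_{i_m}$. The key observation is that the choice of the last factor $u_m \in \{\varepsilon, v_m\}$ is forced once we know $u$ and $v$, because of the interplay between the distinguished and positive conditions at step $m-1$.

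First I would do the base case $m = 0$ (or $m=1$), which is immediate. For the inductive step, I consider the two cases according to whether $\ell(u v_m) < \ell(u)$ or $\ell(u v_m) > \ell(u)$. If $\ell(u v_m) < \ell(u)$: the distinguished condition applied at index $m-1$ says that if $\ell(u_{(m-1)} v_m) < \ell(u_{(m-1)})$ then $u_m = v_m$ must hold. I would argue that we are forced into $u_m = v_m$ — if instead $u_m = \varepsilon$, then $u_{(m-1)} = u$, and $\ell(u v_m) < \ell(u)$ would violate the distinguished condition. So $u_m = v_m$, hence $u_{(m-1)} = u v_m$, which has length $\ell(u) - 1 < \ell(u)$; note also $u v_m \le v' = v v_m$ in Bruhat order (standard property of Bruhat order under multiplication by a generator that descends both sides, or via the subword characterization). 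Apply the inductive hypothesis to $u v_m \le v'$ and $\mathbf{v}'$ to get a unique PDS, and check that appending $v_m$ preserves both the positive and distinguished properties (the positive condition at step $m$ holds since $\ell(u_{(m)}) = \ell(u) > \ell(u) - 1 = \ell(u_{(m-1)})$; this is the only new instance to verify). If instead $\ell(u v_m) > \ell(u)$: here the positive condition forces $u_m = \varepsilon$ — choosing $u_m = v_m$ would give $u_{(m-1)} = u v_m$ with $\ell(u_{(m-1)}) = \ell(u) + 1 > \ell(u) = \ell(u_{(m)})$, violating positivity at step $m$. So $u_m = \varepsilon$ and $u_{(m-1)} = u$; one must still check $u \le v'$, which may fail — and this is exactly the subtle point.

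The main obstacle is the case $\ell(u v_m) > \ell(u)$ when $u \not\le v'$: a priori it looks like no PDS exists. The resolution is that $u \le v = v' v_m$ together with $\ell(u v_m) > \ell(u)$ actually forces $u \le v'$ as well — this is a lifting/exchange property of Bruhat order (if $u \le v' s_i$ and $u s_i > u$ then $u \le v'$; equivalently the "Property Z" / Deodhar's lemma on Bruhat intervals). I would state this as a preliminary lemma on the symmetric group (or cite the standard reference, e.g. Björner–Brenti or Deodhar), and then the inductive step goes through: apply the inductive hypothesis to $u \le v'$ and $\mathbf{v}'$, append $\varepsilon$, and verify the positive and distinguished conditions at step $m$ are vacuous or trivially satisfied. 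Uniqueness in both cases follows because the last factor $u_m$ was shown to be forced, so any two PDSs agree in position $m$ and agree everywhere else by the inductive uniqueness. I would close by remarking that the same argument, read from left to right instead, is the one in Marsh–Rietsch, but the right-to-left induction makes the "forced last step" cleanest to state.
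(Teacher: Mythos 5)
Your proof is correct. Note that the paper you are comparing against gives no proof of this lemma at all --- it is imported verbatim as Lemma 3.5 of \cite{marsh:parametrizations} --- and your right-to-left induction, in which the last factor $u_m$ is forced by positivity when $\ell(uv_m)>\ell(u)$ and by distinguishedness when $\ell(uv_m)<\ell(u)$, with the lifting property of Bruhat order supplying $u\le v'$ (resp.\ $uv_m\le v'$) for the inductive step, is essentially the original Marsh--Rietsch argument, which also constructs the subexpression greedily from the right (so your closing remark that their version reads left to right is the only, inconsequential, inaccuracy).
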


The Young subgroup $\mathfrak{S}_k \times \mathfrak{S}_{n-k} \subset \mathfrak{S}_n$ acts on a permutation $(v(1), v(2), \dots, v(n))$ by letting $\mathfrak{S}_k$ act on $(v(1), v(2), \dots, v(k))$ and $\mathfrak{S}_{n-k}$ act on $(v(k+1), v(k+2), \dots, v(n))$. Any coset in the quotient $\mathfrak{S}_n /  (\mathfrak{S}_k \times \mathfrak{S}_{n-k})$ has a unique representative of the form $(i_1, i_2, \dots, i_k, j_1, j_2, \dots, j_{n-k})$ where $i_1 < i_2 < \cdots < i_k$ and $j_1 < j_2 < \cdots < j_{n-k}$. These representatives are called {\textit{Grassmannian permutations}}. Grassmannian permutations are in bijection with subsets in $\binom{[n]}{k}$ sending $(i_1, i_2, \dots, i_k, j_1, j_2, \dots, j_{n-k})$ to $\{i_1, i_2, \dots, i_k\}$. Often, we will suppress curly braces and commas when writing sets to avoid unwieldy notation, writing $i_1 i_2 \dots i_k$ to mean $\{i_1, i_2, \dots, i_k\}$.

The Bruhat order on $\mathfrak{S}_n$ induces an order on the quotient $\mathfrak{S}_n /  (\mathfrak{S}_k \times \mathfrak{S}_{n-k})$ and thus a partial order on $\binom{[n]}{k}$. Concretely, if $I = \{i_1, i_2, \dots,  i_k\}$ and $J = \{j_1, j_2, \dots, j_k\}$ with $i_1 < i_2 < \cdots < i_k$ and $j_1 < j_2 < \cdots < j_k$, then $I \leq J$ if and only if $i_m \leq j_m$ for all $m$.

A {\textit{Ferrers shape}} is a collection of boxes obtained by taking a lattice path from the Northeast to Southwest corner of a $(n-k) \times k$ rectangle, then taking all boxes Northwest of this lattice path. The steps of the lattice path are labelled $1$ to $n$ starting at the Northeast corner. A box $b$ has coordinates $(i,j)$ if the vertical step of the boundary in the same row as $b$ is labelled $i$ and the horizontal step of the boundary in the same column as $b$ is labelled $j$. Ferrers shapes contained in an $(n-k) \times k$ box are in bijection with subsets $\binom{[n]}{k}$ sending the Ferrers shape $\lambda$ to the set $I_\lambda$ of labels of the vertical steps in its boundary path. Composing bijections, to the Ferrers shape $\lambda$ we also associate a Grassmannian permutation $v_{\lambda}$. Pictorially, the partial order on $\binom{[n]}{k}$ translates to containment of Ferrers shapes.

Given a box $b$ in a Ferrers diagram $D$, $b^{in}$ is the set of boxes weakly to the right and weakly below $b$. Additionally, set $b^{out} = D \setminus b^{in}$. We introduce a partial order on the boxes in a diagram, saying $c \preccurlyeq b$ if and only if $c \in b^{in}$.

A {\textit{pipe dream}} is a filling of a Ferrers shape with crossing tiles and elbow pieces,
\begin{displaymath}
\begin{tikzpicture}
\begin{scope}[scale=.75]
\draw (0,0) -- (1,0) -- (1,1) -- (0,1) -- (0,0);
\draw[thick] (0.5,0.2) -- (0.5,0.8);
\draw[thick] (0.2,0.5) -- (0.8,0.5);
\draw (2,.5) node {and};
\draw (3,0) -- (4,0) -- (4,1) -- (3,1) -- (3,0);
\draw[thick] (3.5,.1) arc (0:90:.4cm);
\draw[thick] (3.5,.9) arc (180:270:.4cm);
\draw (4.2,.1) node {$.$};
\end{scope}
\end{tikzpicture}
\end{displaymath}
\noindent
Think of this filling as a collection of pipes flowing from the Southeast boundary to the Northwest boundary. From a pipe dream, we read off a permutation by labelling the edges along the North and West boundaries of the Ferrers shape such that for each pipe in the diagram, both ends of the pipe have the same label, then writing down the labels that appear along the Northwest boundary in order starting from the Northeast corner.

We say that two squares $b$ and $c$ in a pipe dream are a {\textit{crossing/uncrossing pair}} if two pipes cross in $b$, flow to the Northwest, then next uncross in $c$. A pipe dream is {\textit{reduced}} if it has no crossing/uncrossing pairs. Note that a crossing tile is a crossing if the label of the pipe entering from the bottom is larger than the label of the pipe entering from the right, and is an uncrossing otherwise. Pipe dreams were originally defined by Bergeron and Billey in \cite{bergeron:rc-graphs}, where they were called RC-graphs for ``reduced word, compatible sequence." They were later renamed pipe dreams by Knutson; we choose this terminology since the pipe dreams we consider will not in general be reduced.

\begin{example} \label{ex:pipe_dream}
The pipe dream
\begin{displaymath}
\begin{tikzpicture}
\begin{scope}[scale=.75]
%
\draw[step=1] (0,0) grid (3,3);
\draw[step=1] (3,2) grid (4,3);
%
\draw[thick] (0.5,0.2) -- (0.5,0.8);
\draw[thick] (0.2,0.5) -- (0.8,0.5);
\draw[thick] (1.5,.1) arc (0:90:.4cm);
\draw[thick] (1.5,.9) arc (180:270:.4cm);
\draw[thick] (2.5,0.2) -- (2.5,0.8);
\draw[thick] (2.2,0.5) -- (2.8,0.5);
%
\draw[thick] (.5,1.1) arc (0:90:.4cm);
\draw[thick] (.5,1.9) arc (180:270:.4cm);
\draw[thick] (1.5,1.1) arc (0:90:.4cm);
\draw[thick] (1.5,1.9) arc (180:270:.4cm);
\draw[thick] (2.5,1.1) arc (0:90:.4cm);
\draw[thick] (2.5,1.9) arc (180:270:.4cm);
%
\draw[thick] (0.5,2.2) -- (0.5,2.8);
\draw[thick] (0.2,2.5) -- (0.8,2.5);
\draw[thick] (1.5,2.1) arc (0:90:.4cm);
\draw[thick] (1.5,2.9) arc (180:270:.4cm);
\draw[thick] (2.5,2.1) arc (0:90:.4cm);
\draw[thick] (2.5,2.9) arc (180:270:.4cm);
\draw[thick] (3.5,2.2) -- (3.5,2.8);
\draw[thick] (3.2,2.5) -- (3.8,2.5);
%
\draw (4.25,2.5) node {$1$};
\draw (3.6,1.75) node {$2$};
\draw (3.25,1.35) node {$3$};
\draw (3.25,.5) node {$4$};
\draw (2.5,-.25) node {$5$};
\draw (1.5,-.25) node {$6$};
\draw (.5,-.25) node {$7$};
%
\draw (3.5,3.25) node {$2$};
\draw (2.5,3.25) node {$1$};
\draw (1.5,3.25) node {$3$};
\draw (.5,3.25) node {$4$};
\draw (-.25,2.5) node {$5$};
\draw (-.25,1.5) node {$7$};
\draw (-.25,.5) node {$6$};
\end{scope}
\end{tikzpicture}
\end{displaymath}
gives the permutation $(2,1,3,4,5,7,6)$. The squares $(4,5)$ and $(1,7)$ form a crossing/uncrossing pair.
\end{example}

A {\textit{$\circ/+$-diagram}} is a filling of a Ferrers shape with white stones and pluses,
\begin{displaymath}
\begin{tikzpicture}
\begin{scope}[scale=.75]
\draw (0,0) -- (1,0) -- (1,1) -- (0,1) -- (0,0);
\draw[fill = white] (0.5,0.5) circle (.25);
\draw (2,.5) node {and};
\draw (3,0) -- (4,0) -- (4,1) -- (3,1) -- (3,0);
\draw[thick] (3.5,0.2) -- (3.5,0.8);
\draw[thick] (3.2,0.5) -- (3.8,0.5);
\draw (4.2,.1) node {$.$};
\end{scope}
\end{tikzpicture}
\end{displaymath}
\noindent
$\circ/+$-diagrams are in bijection with pipe dreams by replacing the circles with crossing tiles and the pluses with elbow pieces. This bijection is unfortunate, but is the standard convention in the literature.

A $\bullet/\circ/+$-diagram is a filling of filling of a Ferrers shape with black stones, white stones, and pluses,
\begin{displaymath}
\begin{tikzpicture}
\begin{scope}[scale=.75]
\begin{scope}[xshift=-2cm]
\draw (0,0) -- (1,0) -- (1,1) -- (0,1) -- (0,0);
\draw[fill = black] (0.5,0.5) circle (.25);
\draw (1.2,.1) node {$,$};
\end{scope}
\draw (0,0) -- (1,0) -- (1,1) -- (0,1) -- (0,0);
\draw[fill = white] (0.5,0.5) circle (.25);
\draw (1.2,.1) node {$,$};
\draw (2,.5) node {and};
\draw (3,0) -- (4,0) -- (4,1) -- (3,1) -- (3,0);
\draw[thick] (3.5,0.2) -- (3.5,0.8);
\draw[thick] (3.2,0.5) -- (3.8,0.5);
\draw (4.2,.1) node {$.$};\end{scope}
\end{tikzpicture}
\end{displaymath}
$\bullet/\circ/+$-diagrams are mapped to pipe dreams by sending both the black and white stones to crossing tiles, and sending the pluses to elbow tiles. So, $\bullet/\circ/+$-diagrams may be viewed as $\circ / +$-diagrams where the stones have been decorated to have two colors. Often, but not always, we will require that stones be colored black if and only if they are mapped to uncrossing tiles in the pipe dream. We state whether or not we make this assumption at the start of each section.

Label the top left box of a Ferrers shape contained in a $k \times (n-k)$ box with the simple transposition $s_{n-k}$. If the box to the left of $b$ is labelled with the transpositions $s_i$, label $b$ with $s_{i-1}$ and if the box above $b$ is labelled with $s_i$ label $b$ with $s_{i+1}$. Observe that for any box $b$, the permutation corresponding to the $\circ/+$ diagram where $b$ is filled with a white stone and all other boxes are filled with pluses is exactly the transposition labelling $b$. We use $s_b$ to denote the simple transposition labelling the box $b$.

A {\textit{reading order}} on a Ferrers shape of shape $\lambda$ containing $m$ boxes is a filling of the boxes with the integers from $1$ to $m$ which is increasing upward and to the left. Reading the transpositions decorating the boxes of the Ferrers diagram in any reading order yields a reduced expression $\textbf{v}$ for $v_\lambda$. Reading only the transpositions decorating boxes containing stones in either a $\circ/+$ or a $\bullet/\circ/+$-diagram diagram in the same reading order gives a subexpression ${\mathbf{u}}$ of ${\mathbf{v}}$ for the permutation given by the associated pipe dream.

\begin{theorem}[Proposition 4.5 in \cite{lam:total_positivity}] \label{thm:reading_order_doesnt_matter}
Let $D$ be a $\circ/+$ or $\bullet/\circ/+$-diagram giving the subword, word pair $\textbf{u}, \textbf{v}$ in some reading order.
\begin{itemize}
\item[(i)] The permutations $v$, coming from the Ferrers shape, and $u$, coming from the pipe dream depend only on $D$, not the choice of reading order.
\item[(ii)] Whether $\textbf{u}$ is a distinguished subexpression of $\textbf{v}$ depends only on $D$, not the choice of reading order.
\item[(iii)] Whether $\textbf{u}$ is a positive subexpression of $\textbf{v}$ depends only $D$, not on the choice of reading order.
\end{itemize}
\end{theorem}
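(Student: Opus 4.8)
The plan is to reduce the whole statement to a single ``elementary'' change of reading order and then to a purely local fact about words. First I would recall that the reading orders on a fixed Ferrers shape $\lambda$ are exactly the linear extensions of the poset $P$ on the boxes of $\lambda$ whose cover relations are ``immediately above'' and ``immediately to the left'', and that any two linear extensions of a finite poset are connected by a chain of transpositions, each swapping the two boxes carrying consecutive values $i,i+1$ when those boxes are incomparable in $P$. So it suffices to check that $v$, $u$, the property $\mathbf u\prec\mathbf v$, and the positivity of $\mathbf u$ are each unchanged by one such swap. The key observation is that incomparable boxes carry \emph{commuting} transpositions: if $b=(i_1,j_1)$ lies strictly above and strictly to the right of $c=(i_2,j_2)$ in matrix coordinates, then $i_1<i_2$ and $j_1>j_2$, so, since the box $(i,j)$ is labelled $s_{n-k+i-j}$, the indices of $s_b$ and $s_c$ differ by $(i_1-i_2)+(j_2-j_1)\le-2$; hence $s_b s_c=s_c s_b$ and $s_b\ne s_c$. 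Thus an elementary move replaces $\mathbf v=\cdots v_i\,v_{i+1}\cdots$ by $\mathbf v'=\cdots v_{i+1}\,v_i\cdots$ with $v_i,v_{i+1}$ commuting, and replaces $\mathbf u$ by the corresponding word $\mathbf u'$, where the $i$-th letter of $\mathbf u$ is $v_i$ or $\varepsilon$ according to whether $b$ holds a stone, and likewise for position $i+1$ and $c$.

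Part (i) is then immediate: exchanging two adjacent commuting letters does not change the product $v_1\cdots v_m$, so $v$ is invariant (and equals $v_\lambda$ by the fact recorded just before the theorem); and for $u$, if at most one of $b,c$ holds a stone the subword $\mathbf u$ is literally unchanged, while if both do $\mathbf u$ changes only by exchanging two commuting letters — so $u$ is invariant as well. (Alternatively one may simply note that $u$ is the permutation of the pipe dream underlying $D$, which is read off the pipes with no reference to a reading order.)

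For parts (ii) and (iii) I would prove a local lemma: if $\mathbf v'$ comes from $\mathbf v$ by transposing two commuting adjacent letters $v_i,v_{i+1}$ and $\mathbf u'$ comes from $\mathbf u$ correspondingly, then $\mathbf u\prec\mathbf v\iff\mathbf u'\prec\mathbf v'$ and $\mathbf u$ is positive $\iff\mathbf u'$ is positive. The point is that all prefix products satisfy $u'_{(j)}=u_{(j)}$ except possibly at $j=i$ — for $j\ge i+1$ this uses that $u_i$ and $u_{i+1}$ themselves commute, so $u_{(i-1)}u_{i+1}u_i=u_{(i-1)}u_iu_{i+1}$ — and $u_{(i)}$ changes only from $u_{(i-1)}u_i$ to $u_{(i-1)}u_{i+1}$. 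Hence the distinguishedness (resp.\ positivity) condition at every step other than $i-1$ and $i$ is verbatim the same for the two pairs, and one is reduced to checking that the conjunction of the two conditions at steps $i-1$ and $i$ survives the swap. That is a short four-case analysis over $(u_i,u_{i+1})\in\{(v_i,v_{i+1}),(v_i,\varepsilon),(\varepsilon,v_{i+1}),(\varepsilon,\varepsilon)\}$, whose only input is that non-adjacent simple reflections are orthogonal; this gives the ``length-square'' identities $\ell(wst)-\ell(ws)=\ell(wt)-\ell(w)$ and $\ell(wts)-\ell(wt)=\ell(ws)-\ell(w)$ with $w=u_{(i-1)}$, $s=v_i$, $t=v_{i+1}$, which make each of the four cases symmetric under $s\leftrightarrow t$. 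Stringing the elementary moves together then yields all of (i)--(iii).

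The one place that genuinely needs care is this four-case bookkeeping for (ii) and (iii): one must verify that moving $v_i$ past $v_{i+1}$ can neither force nor un-force a letter of a distinguished subword, nor turn a length increase into a length decrease in a positive subword. I expect the cleanest route is to isolate and prove the length-square identity for commuting generators first and then simply run the four cases against it; everything else in the argument is formal.
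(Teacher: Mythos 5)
Your proof is correct and takes the same route as the paper, which simply notes that the expressions arising from two reading orders of the same diagram are related by commutations of the Coxeter generators and defers the details to Lam--Williams (Proposition 4.5). You fill in exactly what that sketch omits: the reduction to a single swap of adjacent incomparable boxes, the observation that their labels $s_{n-k+r-c}$ differ in index by at least $2$, and the four-case check of distinguishedness and positivity across one commutation, which correctly rests on the standard length identity $\ell(wst)-\ell(ws)=\ell(wt)-\ell(w)$ for distinct commuting simple reflections $s,t$.
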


This theorem is proved by noting that if the expressions $\textbf{u}$ and $\textbf{u}'$ are obtained by altering the reading order on the same diagram, then they are related by commutations of the Coxeter generators, and thus $u = u'$.

Let $b$ be a box in the $\circ/+$ or $\bullet/\circ/+$-diagram $D$. Define $u_{b^{in}}^{D}$ to be the permutation obtained by multiplying the transpositions labelling all boxes containing stones in $b^{in}$ in $D$ some valid reading order. As a corollary of Theorem \ref{thm:reading_order_doesnt_matter}, $u_{b^{in}}^{D}$ does not depend on the choice of reading order. If the diagram is clear from context, we will simply write $u_{b^{in}}$ instead of $u_{b^{in}}^D$. Let
\begin{displaymath}
u_b^D =
\begin{cases}
s_b & \mbox{if $b$ contains a stone in $D$,} \\
\varepsilon & \mbox{if $b$ contains a plus in $D$.}
\end{cases}
\end{displaymath}
\noindent
We will simply write $u_b$ instead of $u_b^D$ if the diagram is clear from context.

\begin{definition} \label{def:go-diagram}
Let $D$ be a $\bullet/\circ/+$-diagram.  Then, $D$ is a {\textit{Go-diagram}} if and only if a box $b$ contains a black stone if and only if $\ell(u_{b^{in}} u_b s_b) < \ell(u_{b^{in}} u_b)$.
\end{definition}

Let ${\mathbf{u}}, {\mathbf{v}}$ be the subword, word pair associated to the diagram $D$ in some reading order. Theorem \ref{thm:reading_order_doesnt_matter} implies that if $D$ is a Go-diagram, then ${\mathbf{u}}$ is a distinguished subexpression of ${\mathbf{v}}$, and that this is independent of the choice of reading order. If a box $b$ has the property that $\ell(u_{b^{in}} u_b s_b ) < \ell(u_{b^{in}} u_b)$, but $b$ is not filled with a black stone, we say that $b$ {\textit{violates the distinguished property}}.

\begin{definition} \label{def:le-diagram}
Let $D$ be a $\bullet/\circ/+$-diagram. Then, $D$ is a {\textit{\Le-diagram}} is any of the following equivalent criterion hold:
\begin{itemize}
\item[(i)] $D$ is an Go-diagram and contains no black stones.
\item[(ii)] Let ${\mathbf{u}}, {\mathbf{v}}$ be the subword, word pair associated to $D$. Then, ${\mathbf{u}}$ is a positive distinguished subexpression of ${\mathbf{v}}$
\item[(iii)] $D$ contains no black stones, and there is no box $b$ in $D$ containing a white stone such that there is a plus to the left of $b$ in its column and above $b$ in its row.
\end{itemize}
\end{definition}

The equivalence between points (i) and (ii) is immediate. The equivalence between points (ii) and (iii) is Theorem 5.1 in \cite{lam:total_positivity}. The condition in point (iii) is called the {\textit{\Le-property}}. The symbol ``\Le" is pronounced ``le," the backward spelling of the letter ``el."

\begin{example}
Consider the diagrams
\begin{displaymath}
\begin{tikzpicture}
\begin{scope}[scale=.75]
\draw[step=1] (0,0) grid (3,2);
%
\draw[thick] (.5,.2) -- (.5,.8);
\draw[thick] (.2,.5) -- (.8,.5);
\draw[thick] (1.5,.2) -- (1.5,.8);
\draw[thick] (1.2,.5) -- (1.8,.5);
\draw[fill = white] (2.5,0.5) circle (.25);
%
\draw[fill = white] (.5,1.5) circle (.25);
\draw[thick] (1.5,1.2) -- (1.5,1.8);
\draw[thick] (1.2,1.5) -- (1.8,1.5);
\draw[thick] (2.5,1.2) -- (2.5,1.8);
\draw[thick] (2.2,1.5) -- (2.8,1.5);
\draw (4,1) node {and};
\begin{scope}[shift={(5,0)}]
\draw[step=1] (0,0) grid (3,2);
%
\draw[thick] (.5,.2) -- (.5,.8);
\draw[thick] (.2,.5) -- (.8,.5);
\draw[thick] (1.5,.2) -- (1.5,.8);
\draw[thick] (1.2,.5) -- (1.8,.5);
\draw[thick] (2.5,.2) -- (2.5,.8);
\draw[thick] (2.2,.5) -- (2.8,.5);
%
\draw[fill = white] (.5,1.5) circle (.25);
\draw[fill = white] (1.5,1.5) circle (.25);
\draw[thick] (2.5,1.2) -- (2.5,1.8);
\draw[thick] (2.2,1.5) -- (2.8,1.5);
\draw (3.2,0) node {$.$};
\end{scope}
\end{scope}
\end{tikzpicture}
\end{displaymath} 
\noindent
The diagram on the left corresponds to the subexpression $s_2 \varepsilon \varepsilon \varepsilon \varepsilon s_3$ of $s_2 s_3 s_4 s_1 s_2 s_3$ and the diagram on the right corresponds to the subexpression $\varepsilon \varepsilon \varepsilon \varepsilon s_2 s_3$. The diagram on the left is not a \Le-diagram, which can be seen by noting that its subexpression is not distinguished or by noting that the box $(2,3)$ contains a white stone, but has a plus both above it and to its left.
\end{example}

\begin{proposition}
The locations of only the white stones or of only the pluses are enough to uniquely determine a Go-diagram.
\end{proposition}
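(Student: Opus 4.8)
The plan is to turn the defining condition of a Go-diagram (Definition~\ref{def:go-diagram}) into an explicit rule that reconstructs the diagram box by box.

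\emph{The key observation.} Fix a box $b$ and set $w_b := u_{b^{in}} u_b$; this permutation depends only on $D$ by Theorem~\ref{thm:reading_order_doesnt_matter} (and its corollary). Since $b$ is the unique $\preccurlyeq$-maximum of $b^{in}$, any reading order of $D$ restricts to a reading order of $b^{in}$ that visits $b$ last, and evaluating $u_{b^{in}}$ in such an order shows that $w_b$ is exactly the product of the transpositions $s_c$ over the stone-boxes $c \in b^{in}\setminus\{b\}$, taken in reading order: if $b$ holds a stone then $u_{b^{in}}$ is that product times $s_b$ while $u_b = s_b$, so the trailing $s_b$ cancels; if $b$ holds a plus then $u_{b^{in}}$ is that product and $u_b = \varepsilon$. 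Thus $w_b$ depends only on which boxes of $b^{in}\setminus\{b\}$ hold stones --- not on the colors of those stones, nor on the content of $b$ itself. Since $s_b$ is a simple reflection, $\ell(w_b s_b) \ne \ell(w_b)$, and Definition~\ref{def:go-diagram} says precisely: $b$ holds a black stone if $\ell(w_b s_b) < \ell(w_b)$, whereas $b$ holds a white stone or a plus if $\ell(w_b s_b) > \ell(w_b)$.

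\emph{The positions of the pluses suffice.} The Ferrers shape is fixed, so the set $S$ of stone-boxes is the complement of the set of plus-boxes and is therefore determined by the pluses. Every $w_b$ depends only on $S \cap (b^{in}\setminus\{b\})$, hence is determined, and by the key observation a stone-box $b$ is black exactly when $\ell(w_b s_b) < \ell(w_b)$ and is white otherwise. This pins down the whole diagram, so two Go-diagrams with the same plus-positions coincide.

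\emph{The positions of the white stones suffice.} List the boxes as $b_1, \dots, b_m$ in some reading order; we show by induction on $i$ that the contents of $b_1, \dots, b_i$ are forced by the set $W$ of white-stone positions. Every box of $b_i^{in}\setminus\{b_i\}$ is $\prec b_i$, hence occurs among $b_1, \dots, b_{i-1}$, so by induction its content is known and therefore $w_{b_i}$ is known. If $b_i \in W$, its content is given. Otherwise $b_i$ holds a black stone or a plus, and by the key observation it holds a black stone precisely when $\ell(w_{b_i} s_{b_i}) < \ell(w_{b_i})$ and a plus otherwise; either way its content is forced. Hence a Go-diagram is the unique Go-diagram with its set of white-stone positions.

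\emph{Where the work is.} The argument is short once the key observation is in hand; the only point needing care is the identity $w_b = u_{b^{in}} u_b = (\text{product of } s_c \text{ over stone-boxes } c \in b^{in}\setminus\{b\})$, i.e.\ that a stone in $b$ contributes a factor $s_b$ that cancels against $u_b = s_b$. This is legitimized by evaluating $u_{b^{in}}$ in a reading order that treats $b$ last, which is allowed by Theorem~\ref{thm:reading_order_doesnt_matter}.
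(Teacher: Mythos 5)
Your proposal is correct and follows essentially the same route as the paper: both reconstructions rest on the observation that $u_{b^{in}}u_b$ equals the product of $s_c$ over stone-boxes $c \in b^{in}\setminus\{b\}$ and hence is independent of the content and color of $b$, after which the Go-diagram condition forces the filling of each box inductively. The only cosmetic difference is that for the plus case the paper phrases the black/white decision via crossings and uncrossings in the associated pipe dream, while you use the equivalent length criterion $\ell(w_b s_b) < \ell(w_b)$ directly.
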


\begin{proof}
Given a Ferrers shape filled with white stones, we complete the filling in increasing order in the partial order on boxes. If $c$ is some box in the diagram and all boxes in $c^{in} \setminus c$ have been filled, we may compute the permutation $u_{c^{in}}u_c$, which does not depend on the filling of $c$. Then, fill $c$ with a plus if $\ell(u_{c^{in}} u_cs_c) > \ell(u_{c^{in}} u_c)$ and fill $c$ with a black stone otherwise.

Given a Ferrers shape filled with pluses, construct a pipe dream by placing elbow tiles in the squares containing pluses and crossing tiles in all other squares. Then, for each square not containing a plus in the Ferrers shape, fill it with a white stone if the square is a crossing in the pipe dream and a black stone if it is an uncrossing.
\end{proof}

The following propositions follows directly from the definitions.

\begin{proposition} \label{prop:length_from_filling}
Let $D$ be the Deodhar component associated to the distinguished subexpression ${\mathbf{u}} \prec {\mathbf{v}}$. Then,
\begin{displaymath}
\begin{split}
\ell(u) & = \#(\mbox{of $\circ$'s in $D$}) - \#(\mbox{of $\bullet$'s in $D$}) \\
& = \ell(v) - \#(\mbox{of $+$'s in $D$}) - 2 \cdot \#(\mbox{of $\bullet$'s in $D$}).
\end{split}
\end{displaymath}
\end{proposition}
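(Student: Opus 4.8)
The plan is to track how the statistics $\#(\circ)$, $\#(\bullet)$, and $\#(+)$ in $D$ relate to the length functions $\ell(u)$ and $\ell(v)$, processing the boxes of $D$ one at a time in a fixed reading order. Fix a reading order on the Ferrers shape $\lambda$, let $\mathbf{v} = v_1 v_2 \cdots v_m$ be the resulting reduced word for $v = v_\lambda$ (so $m = \ell(v)$ equals the number of boxes of $D$), and let $\mathbf{u}$ be the associated subexpression for $u$. Since $\mathbf{v}$ is reduced, $\ell(v_{(i)}) = \ell(v_{(i-1)}) + 1$ for every $i$, so each box contributes exactly $1$ to $\ell(v)$; this is where $\ell(v) = \#(\circ) + \#(\bullet) + \#(+)$ comes from, giving the bookkeeping identity that underlies the second equality.

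For the first equality, I would argue box-by-box that the Go-diagram condition forces $\ell(u_{(i)}) - \ell(u_{(i-1)}) \in \{+1, -1, 0\}$ according to the label of the $i$-th box, and identify which case each label produces. If box $b$ is the $i$-th box, then $u_{(i)} = u_{(i-1)} u_b$ where $u_b = s_b$ if $b$ holds a stone and $u_b = \varepsilon$ if $b$ holds a plus. Here one must be slightly careful: the length change when right-multiplying $u_{(i-1)}$ by $s_b$ is governed by $\ell(u_{(i-1)} s_b)$ versus $\ell(u_{(i-1)})$, and I want to connect this to the quantity $\ell(u_{b^{in}} u_b s_b)$ appearing in Definition~\ref{def:go-diagram}. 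The key observation is that by Theorem~\ref{thm:reading_order_doesnt_matter} (applied to the sub-diagram $b^{in}$) the permutation $u_{b^{in}} u_b$ equals $u_{(i)} = u_{(i-1)} u_b$ — indeed $b^{in}$ consists precisely of the boxes that can precede $b$ in a reading order together with $b$ itself, and choosing a reading order that reads $b^{in}$ first realizes this. So a box $b$ holds a black stone iff $\ell(u_{(i-1)} s_b \cdot s_b) < \ell(u_{(i-1)} s_b)$, i.e. iff right-multiplication by $s_b$ \emph{decreases} length: $\ell(u_{(i-1)} s_b) = \ell(u_{(i-1)}) - 1$. A white stone holds iff $\ell(u_{(i-1)} s_b) = \ell(u_{(i-1)}) + 1$, and a plus contributes no factor at all, so $\ell(u_{(i)}) = \ell(u_{(i-1)})$. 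Summing the increments telescopes to $\ell(u) = \#(\circ) - \#(\bullet)$.

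Finally, combining the two identities: from $\ell(v) = \#(\circ) + \#(\bullet) + \#(+)$ we get $\#(\circ) = \ell(v) - \#(+) - \#(\bullet)$, and substituting into $\ell(u) = \#(\circ) - \#(\bullet)$ yields $\ell(u) = \ell(v) - \#(+) - 2\#(\bullet)$, as claimed.

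I expect the only genuine subtlety to be the step identifying $u_{b^{in}} u_b$ with the prefix $u_{(i)}$ for a well-chosen reading order — that is, checking that $b^{in}$ can be read first in a valid reading order (it is a ``lower set'' for $\preccurlyeq$, hence an order ideal, so this is fine, but it deserves a sentence) and then invoking Theorem~\ref{thm:reading_order_doesnt_matter} to transport the conclusion back to the original reading order. Everything else is routine length-function arithmetic in the symmetric group (each right multiplication by a simple transposition changes length by exactly $\pm 1$), plus the bookkeeping that a reduced word has exactly $\ell(v)$ letters, one per box. I would also remark that this is why the statement says ``follows directly from the definitions'': once Definition~\ref{def:go-diagram} is read as a statement about consecutive length increments, no further input is needed.
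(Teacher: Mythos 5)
The paper offers no written proof of this proposition (it is introduced with ``the following propositions follows directly from the definitions''), and your strategy --- read the boxes in a reading order, observe that each box contributes one letter to the reduced word $\mathbf{v}$ so $\ell(v) = \#(\circ)+\#(\bullet)+\#(+)$, and track the increment $\ell(u_{(i)})-\ell(u_{(i-1)})$ box by box --- is exactly the intended ``direct from the definitions'' argument. Your final characterization (black stone: increment $-1$; white stone: increment $+1$; plus: increment $0$) is correct, and the telescoping and the substitution giving the second identity are fine.

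There is, however, a bookkeeping slip in the middle that makes your chain of inequalities internally inconsistent. Since $b \in b^{in}$ and $b$ is the \emph{last} box of $b^{in}$ in any reading order, the permutation $u_{b^{in}}$ already contains the factor $u_b$; hence $u_{b^{in}} u_b = u_{(i-1)}$ (the factor $u_b$ cancels or is trivial), \emph{not} $u_{(i)}$ as you assert. With the correct identification, Definition~\ref{def:go-diagram} reads directly as: $b$ is black iff $\ell(u_{(i-1)} s_b) < \ell(u_{(i-1)})$, which is what you want. With your identification $u_{b^{in}}u_b = u_{(i-1)}s_b$, the condition you write down, $\ell(u_{(i-1)} s_b \cdot s_b) < \ell(u_{(i-1)} s_b)$, simplifies to $\ell(u_{(i-1)}) < \ell(u_{(i-1)} s_b)$ --- i.e.\ that right-multiplication by $s_b$ \emph{increases} length --- which contradicts the very next clause of your sentence, ``$\ell(u_{(i-1)} s_b) = \ell(u_{(i-1)}) - 1$.'' The two sign errors cancel and you land on the right increments, but as written the step does not follow. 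Fix the identification of $u_{b^{in}}u_b$ and the argument is clean; everything else (the order-ideal observation about $b^{in}$, the appeal to Theorem~\ref{thm:reading_order_doesnt_matter}, and the arithmetic) is correct.
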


\subsection{The Deodhar decomposition of the Grassmannian}

Let $Gr(k,n)$ be the Grassmannian of $k$ dimensional subspaces of $\rr^n$. For $I \in \binom{[n]}{k}$, let $\Delta_I$ be the $I^{th}$ Pl\"ucker coordinate on $Gr(k,n)$. That is, choosing a basis on $\rr^n$, if $V \in Gr(k,n)$ is presented as the row span of a $k \times n$ matrix, then $\Delta_I(V)$ is the maximal minor of this matrix using columns labelled by $I$. The Pl\"ucker coordinates give an embedding of $Gr(k,n)$ into the projective space $\mathbb{P}^{\binom{n}{k} - 1}$. The Pl\"ucker embedding of the Grassmannian is the subvariety of $\mathbb{P}^{\binom{n}{k} - 1}$ defined by the Pl\"ucker relations
\begin{equation} \label{eqn:plucker_relation}
\Delta_I \Delta_J = \sum_{j \in J} \pm \Delta_{I \setminus i \cup j} \Delta_{J \setminus j \cup i},
\end{equation}
\noindent
for each pair of sets $I, J \in \binom{[n]}{k}$ and each element $i \in I$. The signs in the sum on the right hand side are obtained by ordering the indices of $I$ and $J$ in increasing order, placing $j$ in $i$'s old position in $I$ and $i$ in $j$'s old position in $J$, then taking $-1$ times the product of the signs of the two permutations need to rearrange to new sets so that their elements are in increasing order. For example,
\begin{displaymath}
\Delta_{123} \Delta_{245} = \Delta_{234} \Delta_{125} - \Delta_{235} \Delta_{124}
\end{displaymath}
\noindent
is a Pl\"ucker relation.


The positive part of the Grassmannian, $Gr_{\geq 0}(k,n)$, is the subset of $Gr(k,n)$ where all Pl\"ucker coordinates have the same sign.

Deodhar components in the Grassmannian are semialgebraic subsets obtained by setting some Pl\"ucker coordinates equal to zero and demanding other Pl\"ucker coordinates do not vanish. This description of Deodhar components appears as Theorem 7.8 in \cite{talaska:network_parameterizations}. Deodhar components were originally defined in \cite{deodhar:geometric_aspects}.

\begin{definition} \label{def:set_I_b}
Let $b$ be a box in the Go-diagram $D$ associated to the distinguished subword pair ${\mathbf{u}} \prec {\mathbf{v}}$. Define
\begin{equation} \label{eqn:I_b}
I_b = u_{b^{in}} u_b (v_{b^{in}})^{-1} v \{n, n-1, \dots, n-k+1\}.
\end{equation}
\end{definition}

Diagrammatically, $I_b$ may be computed by:
\begin{itemize}
\item Changing the filling of all boxes in $b^{out}$ to white stones,
\item Changing the filling of $b$ to a plus,
\item Computing the pipe dream associated to this diagram,
\item Setting $I_b$ to be the labels of the pipes appearing along the left boundary of this pipe dream.
\end{itemize}

A description of the set $I_b$ is given in \cite{agarwala:nonorientable} using a network associated to the Go-diagram defined in \cite{talaska:network_parameterizations}. Proposition \ref{prop:I_b_again} tells how to recover the sets $I_b$ given the Deodhar component.

\begin{example} \label{ex:sets_from_boxes}
A Go-diagram $D$ is pictured on the left. In the Ferrers shape on the right, each box $b$ is labelled with the set $I_b$ from $D$.
\begin{displaymath}
\begin{tikzpicture}
\begin{scope}[scale=.75]
\draw[step = 1] (0,0) grid (3,3);
%
%
\draw[thick] (0.5,0.2) -- (0.5,0.8);
\draw[thick] (0.2,0.5) -- (0.8,.5);
\draw[fill = white] (1.5,0.5) circle (.25);
\draw[thick] (2.5,0.2) -- (2.5,0.8);
\draw[thick] (2.2,0.5) -- (2.8,0.5);
%
\draw[fill = black] (0.5,1.5) circle (.25);
\draw[thick] (1.5,1.2) -- (1.5,1.8);
\draw[thick] (1.2,1.5) -- (1.8,1.5);
\draw[fill = white] (2.5,1.5) circle (.25);
%
\draw[thick] (0.5,2.2) -- (0.5,2.8);
\draw[thick] (0.2,2.5) -- (0.8,2.5);
\draw[fill = black] (1.5,2.5) circle (.25);
\draw[thick] (2.5,2.2) -- (2.5,2.8);
\draw[thick] (2.2,2.5) -- (2.8,2.5);
\end{scope}
\begin{scope}[xshift=3.5cm,yshift=-.375cm]
\draw[step=1] (0,0) grid (3,3);
\draw (2.5,.5) node {124};
\draw (1.5,.5) node {125};
\draw (.5,.5) node {126};
\draw (2.5,1.5) node {134};
\draw (1.5,1.5) node {145};
\draw (.5,1.5) node {146};
\draw (2.5,2.5) node {234};
\draw (1.5,2.5) node {245};
\draw (.5,2.5) node {456};
\end{scope}
\end{tikzpicture}
\end{displaymath}
\end{example}

\begin{definition}[Theorem 7.8 in \cite{talaska:network_parameterizations}] \label{thm:plucker_coordinates}
Let $D$ be a Go-diagram of shape $\lambda$. Then, the {\textit{Deodhar component}} $\calD$ associated to $D$ is the subset in $Gr(k,n)$ defined by:
\begin{itemize}
\item $\Delta_{I_b} = 0$ for all boxes $b \in D$ containing white stones.
\item $\Delta_{I_b} \neq 0$ for all boxes $b \in D$ containing pluses.
\item $\Delta_{I_\lambda} \neq 0$.
\item $\Delta_{S} = 0$ for all $S \ngeq I_\lambda$.
\end{itemize}
\end{definition}

In general, we will use upper case letters to refer to Go-diagrams and calligraphic letters to refer to Deodhar components. When we want to make explicit reference to the distinguished subword pair associated to a Go-diagram, we will use the notation $\calD_{{\mathbf{u}},{\mathbf{v}}}$. The Deodhar component associated to the Go-diagram in Example \ref{ex:sets_from_boxes} is the subset of $Gr(3,6)$ where
\begin{displaymath}
\begin{split}
& \Delta_{134}, \Delta_{125} = 0, \mbox{and} \\
& \Delta_{123}, \Delta_{124}, \Delta_{126}, \Delta_{145}, \Delta_{234}, \Delta_{456} \neq 0.
\end{split}
\end{displaymath}

\begin{theorem}[Theorem 1.1 in \cite{deodhar:geometric_aspects}] \label{thm:parameterization_of_deodhar}
Let $\calD$ be the Deodhar component labelled by the Go-diagram $D$. Then, $\calD$ is homeomorphic to
\begin{displaymath}
\rr^{\#\left(\bullet\mathit{'s} \ \mathit{in} \ D \right)} \times \left( \rr \setminus \{0\}\right)^{\#\left(+\mathit{'s} \ \mathit{in} \ D \right)}.
\end{displaymath}
\end{theorem}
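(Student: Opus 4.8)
The plan is to reduce the statement to the known parameterization of Deodhar components in the flag manifold (Theorem 1.1 of \cite{deodhar:geometric_aspects}, cited in the excerpt, together with the explicit version of \cite{marsh:parametrizations}) and then to push this parameterization forward along the projection $\pi : Fl_n \to Gr(k,n)$. First I would recall that the Deodhar component $\calR_{{\mathbf{u}},{\mathbf{v}}}$ in the flag manifold associated to the distinguished subexpression ${\mathbf{u}} \prec {\mathbf{v}}$ admits the Marsh--Rietsch parameterization: for each index $i$ one records a factor which is the identity when $u_i = v_i = \varepsilon$, a nonzero torus factor $y_{s_i}(t)$ (with $t \in \rr \setminus \{0\}$) when $u_i = v_i = s_i$, a free parameter $x_{s_i}(p)$ (with $p \in \rr$) when $u_i = \varepsilon$ but one is at a ``positive'' step, and no parameter when $u_i = v_i = s_i$ is forced by the distinguished condition. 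Translating this index-by-index bookkeeping into the language of the Go-diagram $D$: a box filled with a plus contributes a factor from $\rr \setminus \{0\}$, a box filled with a white stone contributes nothing, and a box filled with a black stone contributes a factor from $\rr$. This matches the claimed count $\#(\bullet\text{'s})$ for the $\rr$-factors and $\#(+\text{'s})$ for the $(\rr\setminus\{0\})$-factors, and the identification of which boxes carry which symbol is exactly Definition \ref{def:go-diagram} read against the distinguished/positive dichotomy, so Theorem \ref{thm:reading_order_doesnt_matter} guarantees the correspondence is well defined independent of reading order.

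Next I would verify that the projection $\pi$ restricts to a homeomorphism from $\calR_{{\mathbf{u}},{\mathbf{v}}}$ onto its image $\calD$. This is where the Grassmannian-specific content enters: one must check that distinct points of the flag Deodhar component remain distinct after forgetting all but the $k$-dimensional piece of the flag, and that the image is precisely the semialgebraic set of Definition \ref{thm:plucker_coordinates}. The cleanest route is to use the parameterization to write down an explicit $k \times n$ matrix representative $M({\mathbf{t}}, {\mathbf{p}})$ whose Plücker coordinates are Laurent-monomial-times-polynomial expressions in the parameters, and to observe that the parameters can be recovered from ratios of these Plücker coordinates — concretely, the nonvanishing minors $\Delta_{I_b}$ for $b$ a plus-box, together with $\Delta_{I_\lambda}$, supply enough data to solve for each $t$ and $p$ uniquely. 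That the matrix entries depend continuously (indeed rationally) on the parameters, and conversely, gives the homeomorphism; combined with the previous paragraph's count of parameters this yields the desired product of $\rr$'s and $\rr \setminus \{0\}$'s.

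The main obstacle, I expect, is the injectivity of $\pi$ on the flag Deodhar component and the identification of the image with the stated Plücker description — i.e. showing nothing is lost or glued under projection and that no extra defining (in)equalities are needed beyond those in Definition \ref{thm:plucker_coordinates}. One has to check that the ``distinguished'' choices made at the black-stone boxes, which in the flag manifold are genuine free parameters, really do survive as independent coordinates on the Grassmannian and are not accidentally determined by the lower-dimensional Plücker data; this is precisely the point at which the Deodhar decomposition properly refines the positroid decomposition, so the argument cannot be purely formal. I would handle it by invoking the explicit network/parameterization of \cite{talaska:network_parameterizations} (whose Theorem 7.8 is already quoted here as the definition of $\calD$) to read off the parameters from the minors $\Delta_{I_b}$ directly, thereby exhibiting the inverse map explicitly and closing the argument. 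The remaining verifications — continuity of the maps in both directions, and that the boundary (in)equalities $\Delta_S = 0$ for $S \ngeq I_\lambda$ and $\Delta_{I_b} = 0$ for white-stone boxes hold identically on the parameterized family — are routine once the explicit matrix is in hand.
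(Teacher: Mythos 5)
First, a point of comparison: the paper does not prove this statement at all --- it is imported as Theorem 1.1 of \cite{deodhar:geometric_aspects}, with the explicit parameterization due to \cite{marsh:parametrizations} --- so there is no in-paper argument to measure your proposal against. Your overall strategy (parameterize the Deodhar component upstairs in the flag manifold factor by factor, then show the projection to $Gr(k,n)$ is a homeomorphism onto the semialgebraic set of Definition \ref{thm:plucker_coordinates}) is the standard and correct route, and you are right that the second step --- injectivity of the projection and identification of the image --- is where the Grassmannian-specific work lies; handling it via the network parameterization of \cite{talaska:network_parameterizations} is exactly how the literature closes that step.

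However, your index-by-index bookkeeping is wrong, and it is precisely the step that produces the exponents $\#(\bullet)$ and $\#(+)$. There are only three cases, not four: every letter of the reduced word $\mathbf{v}$ is a genuine simple reflection, so the case ``$u_i = v_i = \varepsilon$'' never occurs. The Marsh--Rietsch factors are: if $u_i = s_{j_i}$ with $\ell(u_{(i)}) = \ell(u_{(i-1)}) + 1$ (a white stone) the factor is $\dot s_{j_i}$ with \emph{no} parameter; if $u_i = \varepsilon$ (a plus) the factor is $y_{j_i}(t)$ with $t \in \rr \setminus \{0\}$; and if $u_i = s_{j_i}$ is forced by the distinguished condition, i.e. $\ell(u_{(i)}) = \ell(u_{(i-1)}) - 1$ (a black stone), the factor is $x_{j_i}(m)\,\dot s_{j_i}^{-1}$ with $m \in \rr$ a free parameter. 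You have assigned the $\rr \setminus \{0\}$ parameter to the used letters, the free $\rr$ parameter to the skipped letters, and no parameter to the forced letters; run as written, your bookkeeping yields $\rr^{\#(+)} \times \left(\rr \setminus \{0\}\right)^{\#(\circ)}$, whose dimension exceeds the correct one by $\ell(u)$ (compare Proposition \ref{prop:length_from_filling}), so it is wrong whenever $u \neq \varepsilon$. The dictionary you then assert in the following sentence (plus $\mapsto \rr \setminus \{0\}$, black stone $\mapsto \rr$, white stone $\mapsto$ nothing) is the correct one, but it contradicts rather than follows from the case analysis you gave, so the counting step does not go through as written. The fix is simply to replace your case analysis with the one above; the rest of the outline is sound.
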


The following interpretation of the sets $I_b$ is given in Theorem 1.17 in \cite{agarwala:nonorientable}.

\begin{proposition} \label{prop:I_b_again}
Suppose the vertical steps of the boundary of the Ferrers shape $\lambda$ are $i_1 < i_2 < \cdots < i_k$ and the horizontal steps are $j_1 < j_2 < \cdots < j_{n-k}$. Let $b = (i_\ell,j_m)$ be a box in the Go-diagram $D$ of shape $\lambda$ indexing the Deodhar component $\calD$. Let $I'_b = I_b \setminus j_m \cup i_\ell$. Then, $I'_b$ is the maximal set such that $i_1, i_2, \dots i_\ell \in I'_b$, $j_n, j_{n-1}, \dots, j_{m} \notin I'_b$, and $\Delta_{I'_b}$ is not uniformly vanishing on $\calD$.
\end{proposition}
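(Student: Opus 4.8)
Since Theorem~\ref{thm:parameterization_of_deodhar} realizes $\calD$ as a space homeomorphic to $\rr^{a}\times(\rr\setminus\{0\})^{b}$, it is irreducible, so for each $S\in\binom{[n]}{k}$ the coordinate $\Delta_{S}$ either vanishes identically on $\calD$ or is nonzero on a dense subset of it; let $\calB$ denote the set of $S$ of the latter type. An element that both belongs to a subfamily of a finite poset and lies above every member of that subfamily is its maximum, so the plan is to prove: (a)~$\{i_{1},\dots,i_{\ell}\}\subseteq I'_{b}$, $I'_{b}\cap\{j_{m},j_{m+1},\dots,j_{n-k}\}=\emptyset$, and $I'_{b}\in\calB$; and (b)~every $S\in\calB$ with $\{i_{1},\dots,i_{\ell}\}\subseteq S$ and $S\cap\{j_{m},\dots,j_{n-k}\}=\emptyset$ satisfies $S\leq I'_{b}$.

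For (a) I would run the diagrammatic recipe for $I_{b}$ from just after Definition~\ref{def:set_I_b}: pass to the pipe dream $P$ in which $b^{out}$ is filled entirely with crossing tiles, $b$ with an elbow, and $b^{in}\setminus b$ as in $D$, and read off the labels along the left boundary. Every box strictly above the row of $b$ or strictly to the left of the column of $b$ lies in $b^{out}$, hence is a crossing in $P$. Therefore the pipe entering along the vertical step $i_{p}$ with $p<\ell$ travels straight west through its (all-crossing) row and exits on the left, so $i_{1},\dots,i_{\ell-1}\in I_{b}$; the pipe entering along a horizontal step in a column strictly left of $b$ travels straight north and exits along the top, so $j_{m+1},\dots,j_{n-k}\notin I_{b}$; the pipe entering the column of $b$ from below must turn west --- at the elbow in $b$, or at an earlier elbow of $b^{in}$ lying in that column --- and then runs west through the all-crossing part of $b^{out}$ to its left, so it exits on the left, giving $j_{m}\in I_{b}$; dually the pipe entering the row of $b$ from the east must turn north and exits along the top, giving $i_{\ell}\notin I_{b}$. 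Exchanging $j_{m}$ for $i_{\ell}$ then produces exactly the claimed memberships for $I'_{b}$. The last clause, $I'_{b}\in\calB$, is the point at which the combinatorics of diagrams no longer suffices; I would deduce it from the network parameterization of $\calD$ in \cite{talaska:network_parameterizations}, in which $I'_{b}$ is the sink set of a canonical family of vertex-disjoint directed paths --- the family routed as tightly as possible around the box $b$ --- whose Lindstr\"om--Gessel--Viennot monomial is not cancelled for generic parameter values.

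The main obstacle is (b). Here I would argue contrapositively: given $S$ obeying the two boundary conditions with $S\not\leq I'_{b}$, I would locate a three-term Pl\"ucker relation~\eqref{eqn:plucker_relation} that expresses $\Delta_{S}\Delta_{I'_{b}}$ as a signed sum of products of Pl\"ucker coordinates each of which vanishes identically on $\calD$ --- because each such product has a factor equal to $I_{c}$ for some box $c$ carrying a white stone, or a factor $T$ with $T\not\geq I_{\lambda}$ --- so that $\Delta_{S}\Delta_{I'_{b}}\equiv0$ and hence, since $\Delta_{I'_{b}}\not\equiv0$, $\Delta_{S}\equiv0$, contradicting $S\in\calB$. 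Organizing the choice of $c$ and, more substantively, showing that the family of sets satisfying the boundary conditions imposed by $b$ really does have $I'_{b}$ as a \emph{maximum} (equivalently, that the minor of the matroid of $\calD$ obtained by contracting $\{i_{1},\dots,i_{\ell}\}$ and deleting $\{j_{m},\dots,j_{n-k}\}$ is a Schubert matroid, with its forbidden bases accounted for exactly by the white-stone sets $I_{c}$) is the heart of the matter; I expect the cleanest packaging is an induction over the boxes of $D$ in a reading order, carrying the hypothesis that each $I_{c}$ with $c$ white is itself the componentwise-largest element of $\calB$ meeting the boundary conditions attached to $c$. An alternative, and probably shorter, route is to carry out (b) entirely inside the Talaska--Williams network --- where $\Delta_{S}\not\equiv0$ on $\calD$ becomes routability of $S$, and the maximality of $I'_{b}$ becomes the existence of a componentwise-largest routable sink set among those compatible with $b$ --- which is in essence Theorem~1.17 of \cite{agarwala:nonorientable}; this theorem may also simply be cited.
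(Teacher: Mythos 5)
The paper supplies no proof of Proposition \ref{prop:I_b_again} at all: the sentence preceding it attributes the statement to Theorem 1.17 of \cite{agarwala:nonorientable}, and it is simply quoted. So your closing remark that the theorem ``may also simply be cited'' is exactly the paper's treatment, and in that sense your proposal and the paper coincide.

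Judged as a self-contained argument, however, your sketch only settles the easy half. The diagrammatic computation in (a) is sound: with $b^{out}$ turned into crossings and $b$ into an elbow, the pipes in rows above $b$ and in columns left of $b$ run straight, and the pipes labelled $j_m$ and $i_\ell$ are forced to exit west and north respectively, which gives precisely the membership and non-membership claims for $I'_b$. But the two claims carrying all the content --- that $\Delta_{I'_b}$ is not identically zero on $\calD$, and that every $S$ satisfying the boundary conditions with $\Delta_S\not\equiv 0$ satisfies $S\leq I'_b$ --- are only gestured at. The LGV/routability argument for non-vanishing and the ``Schubert matroid after contraction and deletion'' induction for maximality are not carried out, and as you note they amount to re-deriving the very theorem of \cite{agarwala:nonorientable} being cited; the three-term Pl\"ucker-relation strategy is likewise only a plan, since producing, for each $S\not\leq I'_b$, a relation all of whose remaining terms vanish on $\calD$ is the genuinely hard step. (A smaller quibble: $\calD\cong\rr^{a}\times(\rr\setminus\{0\})^{b}$ is disconnected for $b\geq 1$, so ``irreducible, hence dense non-vanishing locus'' needs justification; the clean route is through the polynomial parameterization of $\calD$, whose parameter space is Zariski irreducible.) In short: as a blind proof the proposal has real gaps, but since the paper itself proves nothing here, your fallback of citing Theorem 1.17 of \cite{agarwala:nonorientable} is the appropriate resolution and matches the paper.
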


The following proposition is an immediate consequence of Corollary 3.16 in \cite{agarwala:nonorientable}, together with the fact that the Deodhar decomposition refines the Richardson decomposition, Theorem \ref{thm:deodhar_decomposes_richardson} below.

\begin{proposition} \label{prop:large_sets_vanish}
Suppose the vertical steps of the boundary of the Ferrers shape $\lambda$ are $i_1 < i_2 < \cdots < i_k$ and the horizontal steps are $j_1 < j_2 < \cdots < j_{n-k}$. Let $b = (i_\ell,j_m)$ be a box in the Go-diagram $D$ of shape $\lambda$ indexing the Deodhar component $\calD$. Let
\begin{equation} \label{eqn:J_b}
J_b =
u_{b^{in}} (v_{b^{in}})^{-1} v \{n, n-1, \dots, n-k+1\}.
\end{equation}
\noindent
If $S \in \binom{[n]}{k}$ is such that $i_1, i_2, \dots, i_{\ell -1} \in S$, $j_n, j_{n-1}, \dots, j_{m+1} \notin S$, and $S > J_b$, then $\Delta_S$ vanishes uniformly on $\calD$.
\end{proposition}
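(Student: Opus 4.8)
The plan is to transport to $\calD$ the known description of which Plücker coordinates are identically zero on the positroid stratum containing it, and to recognize $J_b$ as the relevant bound appearing there.

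First I would reduce to the Richardson stratum. By Theorem~\ref{thm:deodhar_decomposes_richardson} the Deodhar component $\calD=\calD_{\mathbf{u},\mathbf{v}}$ is contained in the projection to $Gr(k,n)$ of the Richardson cell $R_{u,v}\subseteq Fl_n$, an image that depends only on the pair $u\le v$. Since a Plücker coordinate $\Delta_S$ of $Gr(k,n)$ pulls back along this projection to the corresponding Plücker coordinate of the $k$-plane in the flag, it is enough to prove that $\Delta_S$ vanishes identically on the image of $R_{u,v}$.

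Next I would rewrite $J_b$ into the form used in the Richardson-cell computation. The set $b^{in}$ is the principal order ideal $\{c:c\preccurlyeq b\}$, so one can choose a reading order in which the boxes of $b^{in}$ receive the labels $1,\dots,|b^{in}|$; reading the decorating transpositions in this order gives $v=v_{b^{in}}\cdot(\text{product over }b^{out})$ and, reading only the stones, $u=u_{b^{in}}\cdot(\text{product over the stones of }b^{out})$. Hence $(v_{b^{in}})^{-1}v$ equals the product over $b^{out}$, and \eqref{eqn:J_b} becomes $J_b=u_{b^{in}}\cdot(\text{product over }b^{out})\cdot\{n,n-1,\dots,n-k+1\}$. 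Comparing with \eqref{eqn:I_b}, this is exactly the recipe of Definition~\ref{def:set_I_b} for $I_b$ with a single change: the box $b$ is \emph{not} refilled with a plus (equivalently, $b$ is not turned into an elbow in the pipe dream). Thus $J_b=I_b$ when $b$ holds a plus, and in general dropping the factor $u_b$ has the effect of relaxing by one box the row and column cutoffs, i.e.\ of passing from the constraints $\{i_1,\dots,i_\ell\}\subseteq S,\ \{j_{n-k},\dots,j_m\}\cap S=\emptyset$ of Proposition~\ref{prop:I_b_again} to the weakened constraints $\{i_1,\dots,i_{\ell-1}\}\subseteq S,\ \{j_{n-k},\dots,j_{m+1}\}\cap S=\emptyset$ in the present statement.

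Finally I would invoke Corollary~3.16 of \cite{agarwala:nonorientable}: together with Theorem~\ref{thm:deodhar_decomposes_richardson} it identifies, among the subsets $T\in\binom{[n]}{k}$ with $\{i_1,\dots,i_{\ell-1}\}\subseteq T$ and $\{j_{n-k},\dots,j_{m+1}\}\cap T=\emptyset$, the Bruhat-largest one whose Plücker coordinate is not identically zero on the image of $R_{u,v}$, and by the previous step this set is precisely $J_b$. Consequently any $S$ satisfying the hypotheses with $S>J_b$ lies strictly above every admissible non-vanishing $T$, so $\Delta_S$ vanishes identically on the image of $R_{u,v}$, hence on $\calD$. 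The only place that requires care—and the main obstacle—is this identification: one must check that the triple $(u_{b^{in}},v_{b^{in}},v)$ defining $J_b$ is literally the indexing data of Corollary~3.16, and that the ``loosening by one box'' of the row and column constraints corresponds exactly to suppressing the factor $u_b$. Once that dictionary is in place the vanishing is formal, which is why the paper records the statement as immediate.
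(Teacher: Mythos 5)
Your proposal follows exactly the route the paper takes: the paper offers no independent argument for this proposition, stating only that it "is an immediate consequence of Corollary 3.16 in \cite{agarwala:nonorientable}, together with the fact that the Deodhar decomposition refines the Richardson decomposition, Theorem \ref{thm:deodhar_decomposes_richardson}." Your write-up supplies the same two ingredients (reduction to the Richardson cell via Theorem \ref{thm:deodhar_decomposes_richardson}, then the identification of $J_b$ with the extremal set of Corollary 3.16), just with the dictionary between $J_b$ and the cited corollary spelled out explicitly.
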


\begin{proposition}
Suppose that boxes $b$ and $c$ in a Go-diagram $D$ share an edge. Then, $I_b$ and $I_c$ differ by a single element.
\end{proposition}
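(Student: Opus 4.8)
The plan is to deduce the statement from Proposition~\ref{prop:I_b_again} together with a standard exchange property of matroid bases. Since $b$ and $c$ share an edge, one of the two boxes lies in the set $b^{in}$ of the other; relabelling if necessary, we may assume $c \in b^{in}$, so that $c$ is either the box directly below $b$ or the box directly to the right of $b$. Write $b = (i_\ell, j_m)$ and recall from Proposition~\ref{prop:I_b_again} that $I'_b := I_b \setminus \{j_m\} \cup \{i_\ell\}$ is the $\le$-maximal $S \in \binom{[n]}{k}$ with $\{i_1, \dots, i_\ell\} \subseteq S$, $S \cap \{j_m, \dots, j_{n-k}\} = \emptyset$, and $\Delta_S$ not identically zero on $\calD$. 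If $c$ lies directly below $b$ then $c = (i_{\ell+1}, j_m)$, so $I'_c$ is the analogous $\le$-maximal set but with the ``required'' set enlarged to $\{i_1, \dots, i_{\ell+1}\}$; if $c$ lies directly to the right of $b$ then $c = (i_\ell, j_{m-1})$, so $I'_c$ uses the same required set but the ``forbidden'' set enlarged to $\{j_{m-1}, \dots, j_{n-k}\}$. Since $I_b = I'_b \setminus \{i_\ell\} \cup \{j_m\}$ and similarly for $c$, it suffices to understand $I'_b \triangle I'_c$ and then translate back.

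First I would record that $\calM := \{S \in \binom{[n]}{k} : \Delta_S \text{ does not vanish identically on } \calD\}$ is the set of bases of a matroid on $[n]$, namely the matroid of a generic point of $\calD$; this is well defined because $\calD$ is Zariski-dense in an irreducible variety (Theorem~\ref{thm:parameterization_of_deodhar}, via the Marsh--Rietsch parametrization, realizes $\calD$ as the image of an irreducible space under a morphism). Imposing ``$S$ contains $T$'' and ``$S$ misses $U$'' on a basis $S$ amounts to passing to the minor $\calM / T \setminus U$, which is again a matroid and has a unique $\le$-maximal basis, the one selected greedily by scanning $[n]$ in decreasing order. Thus $I'_b$ and $I'_c$ are, up to adjoining the required elements, greedy bases of two minors of $\calM$ that differ by contracting one further element ($i_{\ell+1}$ in the ``below'' case) or deleting one further element ($j_{m-1}$ in the ``right'' case).

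The crux is then the following matroidal lemma: if $M$ is a matroid with $\le$-maximal basis $B$ and $y$ is a non-loop, then the $\le$-maximal basis of $M$ containing $y$ is $B$ itself when $y \in B$, and is $B \cup \{y\} \setminus \{z_0\}$ otherwise, where $z_0$ is the least element of the fundamental circuit of $y$ with respect to $B$ (in particular $z_0 \in B$); dually, if $e$ is not a coloop, the $\le$-maximal basis of $M$ avoiding $e$ is a single exchange away from $B$. I would prove this by running the greedy algorithm for $M$ and for $M/y$ in parallel and using that the fundamental circuit of $y$ lies entirely above $z_0$: this forces the two runs to agree on every element except $z_0$. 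Granting the lemma, the ``below'' case gives $I'_c = I'_b$ (if $i_{\ell+1} \in I'_b$) or $I'_c = I'_b \cup \{i_{\ell+1}\} \setminus \{z_0\}$ with $z_0 \in I'_b$ lying in the ground set $[n]\setminus(T\cup U)$ of the minor (hence $z_0 \notin \{i_1,\dots,i_\ell\}$), and the ``right'' case is dual; substituting into $I_b = I'_b \setminus \{i_\ell\} \cup \{j_m\}$ and the corresponding formula for $I_c$ then exhibits $I_b \triangle I_c$ as an explicit two-element set in each sub-case.

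The main obstacle I anticipate is precisely this last translation: one must verify in each of the four sub-cases that $I_b \triangle I_c$ has size exactly two, not zero, i.e. that the element removed from $I'_b$ to form $I_b$ — an element of $\{i_1, \dots, i_\ell\}$, or $j_m$ — is distinct from the element in which $I'_b$ and $I'_c$ differ; this is automatic because the latter element lies in $[n] \setminus (T \cup U)$, which is disjoint from both the required and the forbidden sets. A secondary, more foundational point is the claim that $\calM$ is a matroid; a reader wishing to bypass irreducibility could instead argue directly from $I_b = u_{b^{in}} u_b (v_{b^{in}})^{-1} v \{n, \dots, n-k+1\}$, choosing a reading order that processes $c^{in}$ first so that the permutations defining $I_b$ and $I_c$ share a common left factor and differ only by a permutation supported on the short interval of Coxeter generators attached to the boxes lying between $b$ and $c$ — but that route demands a considerably more involved case analysis.
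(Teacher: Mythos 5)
Your proposal is correct in its essentials, but it reaches the conclusion by a genuinely different route from the paper. The paper's proof is a two-line pipe-dream computation: using the diagrammatic recipe for $I_b$ (fill $b^{out}$ with crossings, put an elbow in $b$, read off the pipes on the left boundary), it names the exchanged elements explicitly --- if $p$ and $q$ are the pipes entering $b$ from the bottom and from the left, then $I_{(i_\ell,j_{m+1})} = I_b \setminus p \cup j_{m+1}$ and $I_{(i_{\ell-1},j_m)} = I_b \setminus i_{\ell-1} \cup q$. You instead start from the extremal characterization in Proposition \ref{prop:I_b_again}, observe that the nonvanishing Pl\"ucker coordinates on $\calD$ form the matroid of a generic point (legitimate: the Marsh--Rietsch parametrization makes the Zariski closure of $\calD$ irreducible, so any two not-identically-vanishing $\Delta_S$ are simultaneously nonzero somewhere), and reduce to the standard fact that the Gale-maximal bases of $M$ and of $M/y$ or $M\setminus e$ differ by at most one exchange. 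Both arguments work; the paper's buys an explicit identification of the exchanged pair essentially for free, while yours is more conceptual, needs no pipe-dream bookkeeping, and would apply to any two boxes whose required/forbidden data differ by a single contraction or deletion. Two small points to tighten: $z_0$ should be the least element of $C(y,B)\setminus\{y\}$ (equivalently of $C(y,B)\cap B$), not of $C(y,B)$ itself --- $y$ can be the minimum of its own fundamental circuit, in which case your parenthetical ``$z_0 \in B$'' fails as stated; and you should note that $i_{\ell+1}$ is not a loop (resp.\ $j_{m-1}$ not a coloop) of the relevant minor, which is exactly the nonemptiness implicit in applying Proposition \ref{prop:I_b_again} to the box $c$.
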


\begin{proof}
Let $i_1 < i_2 < \cdots < i_k$ be the labels of the vertical steps of the boundary of $D$ and $j_1< j_2 < \dots < j_{n-k}$ be the labels of the horizontal steps of the boundary of $D$ and suppose $b = (i_\ell,j_m)$. Let $p$ be the label of the pipe entering $b$ from the bottom and $q$ be the label of the pipe entering $b$ from the left in the pipe dream associated to $D$. If $c =  (i_{\ell}, j_{m+1})$, then $I_c = I_b \setminus p \cup j_{m+1}$. If $c = (i_{\ell - 1}, j_m)$, then $I_c = I_b \setminus i_{\ell - 1} \cup q$. 
\end{proof}

\subsection{Other decompositions of the Grassmannian}

This section briefly remarks about how the Deodhar decomposition is related to other common decompositions of the Grassmannian.

Let $v \in \mathfrak{S}_n$ be the Grassmannian permutation associated to Ferrers shape $\lambda$. Associated to pairs of permutations $u, v$ with various constraints imposed on $u$, there are several decompositions of the Grassmannian $Gr(k,n)$. The stricter the constraint imposed on $u$, the coarser the decomposition of the Grassmannian. Below are the common decompositions of $G(k,n)$ and the associated constraints on $u$ arranged from coarsest to finest.

\begin{displaymath}
\begin{array}{|c|c|c|}
\hline
\mbox{Decomposition} & \mbox{Notation} & \mbox{Constraints on $u$} \\\hline\hline
\mbox{Schubert} & \calS_{v} & u = \varepsilon \\\hline
\mbox{Richardson} & \calR_{u,v} & u \mbox{ is Grassmannian}, u \leq v \\\hline
\mbox{Positroid} & \calP_{u,v} &  u \leq v \\\hline
\mbox{Deodhar} &  \calD_{{\mathbf{u}},{\mathbf{v}}} & {\mathbf{u}} \prec {\mathbf{v}} \\\hline
\end{array}
\end{displaymath}

The Deodhar decomposition differs from the other decompositions in this list in that it doesn't just care that $u \leq v$, but how $\mathbf{u}$ is presented as a subword of $\mathbf{v}$.

All of the components in these decompositions have the feature that they can be described as subsets of the Grassmannian by setting certain Pl\"ucker coordinates to zero, demanding certain other Pl\"ucker coordinates be non-zero, and leaving the remaining Pl\"ucker coordinates unspecified. The coarser the decomposition, the more Pl\"ucker coordinates are left unspecified. From a combinatorial standpoint, all of the decompositions can be described as introducing decorations to the Ferrers shape $\lambda$.

\begin{itemize}
\item The Schubert stratification remembers only $\lambda$. This should be viewed coming from the fact that the $\circ/+$-diagram corresponding to the positive distinguished subexpression of identity permutation in $\mathbf{v}$ is $\lambda$ with every square with a plus. Let $I_{\lambda} \in \binom{[n]}{k}$ be the set associated to $v$. The Schubert cell $\calS_v$ is defined by $\Delta_{I_\lambda} \neq 0$ and $\Delta_{S} = 0$ for all $S \ngeq I_\lambda$.

\item The Richardson stratification introduces another Ferrers shape $\mu$ contained inside $\lambda$; the pair is often called a skew shape. This should be viewed coming from the fact that the $\circ/+$-diagram corresponding to the positive distinguished subexpression $u$ in $\mathbf{v}$ can be drawn by first drawing the shape $\mu$ associated $u$ inside $\lambda$, then filling all squares in the skew diagram $\lambda/\mu$ with pluses and filling all other squares with white stones. The Richardson cell $\calR_{u,v}$ is defined by  $\Delta_{I_\mu}, \Delta_{I_\lambda} \neq 0$ and $\Delta_{S} = 0$ for all $S \nleq I_\mu$ and all $S \ngeq I_\lambda$.

\item The positroid stratification\footnote{Positroids were originally defined to stratify positive Grassmannian. There have been several extensions of this stratification to the entire Grassmannian. When we say ``positroid strata," we mean the stratification of the Grassmannian by projections of Richardson varieties in the full flag manifold, studied in \cite{knutson:juggling}.} introduces the filling of $\lambda$ with pluses and white stones corresponding to the unique positive distinguished subexpression for $u$ in $\mathbf{v}$. That is, positroid cells are indexed by \Le-diagrams. We will not need an explicit description of the positroid cell $\calP_{u,v}$. These cells have been studied extensively and a description may be found in \cite{postnikov:total_positivity}.

\item The Deodhar decomposition is associated to the Go-diagram built from the pair $\mathbf{u} \prec \mathbf{v}$, as we saw in the previous section.
\end{itemize}

As a word of caution, we remark that while \Le-diagrams are in general Go-diagrams, the positroid cell and Deodhar component associated to the same diagram are in general different. For example, consider the following \Le-diagram.

\begin{displaymath}
\begin{tikzpicture}
\begin{scope}[scale = .75]
\draw[step=1] (0,0) grid (2,2);
\draw (2,1) -- (3,1) -- (3,2) -- (2,2);
\draw[thick] (.5,.2) -- (.5,.8);  \draw[thick] (.2,.5) -- (.8,.5);
\draw[thick] (1.5,.2) -- (1.5,.8);  \draw[thick] (1.2,.5) -- (1.8,.5);
\draw[thick] (.5,1.2) -- (.5,1.8);  \draw[thick] (.2,1.5) -- (.8,1.5);
\draw[thick] (1.5,1.2) -- (1.5,1.8);  \draw[thick] (1.2,1.5) -- (1.8,1.5);
\draw[thick] (2.5,1.2) -- (2.5,1.8);  \draw[thick] (2.2,1.5) -- (2.8,1.5);
\end{scope}
\end{tikzpicture}
\end{displaymath}
\noindent
The positroid starta associated to this diagram is determined by
\begin{displaymath}
\Delta_{13}, \Delta_{23}, \Delta_{34}, \Delta_{45}, \Delta_{51} \neq 0; \quad \Delta_{12} = 0,
\end{displaymath}
\noindent
while the Deodhar component additionally requires that $\Delta_{14} \neq 0$. This \Le-diagram indexes the pair of permutations $\varepsilon, (2,3,4,1,3)$. So, it may additionally be seen as determining either a Schubert cell or a Richardson cell. The Schubert cell associated to this pair of permutations is determined by
\begin{displaymath}
\Delta_{13} \neq 0; \quad \Delta_{12} = 0,
\end{displaymath}
\noindent
and the Richardson cell is determined by
\begin{displaymath}
\Delta_{13}, \Delta_{45} \neq 0; \quad \Delta_{12} = 0.
\end{displaymath}
\noindent
In general, if a diagram could serve as an index for two decompositions, the set of vanishing Pl\"ucker coordinates will be the same in both decompositions. The set of nonvanishing Pl\"ucker coordinates in the coarser decomposition will be a subset of the set of nonvanishing Pl\"ucker coordinates in the finer decomposition.

Deodhar components were originally introduced to refine the Richardson stratification.

\begin{theorem}[Corollary 1.2 in \cite{deodhar:geometric_aspects}] \label{thm:deodhar_decomposes_richardson}
Let $\calR_{u,v}$ be a Richardson cell. Then,
\begin{displaymath}
\calR_{u,v} = \bigsqcup_{u' \sim u} \bigsqcup_{{\mathbf{u'}} \prec {\mathbf{v}}} \calD_{{\mathbf{u'}},{\mathbf{v}}},
\end{displaymath}
\noindent
where the first union is across all $u'$ in the same equivalence class as $u$ in $\mathfrak{S}_n / (\mathfrak{S}_{k} \times \mathfrak{S}_{n - k})$ and the second union is across all distinguished subexpressions for $u'$ in ${\mathbf{v}}$.
\end{theorem}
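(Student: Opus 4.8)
The plan is to deduce the statement from Deodhar's decomposition of the full flag manifold \cite{deodhar:geometric_aspects} by pushing it forward along the projection $\pi\colon \mathrm{Fl}_n\to Gr(k,n)$, using crucially that $v=v_\lambda$ is the minimal length representative of its coset in $\mathfrak{S}_n/(\mathfrak{S}_k\times\mathfrak{S}_{n-k})$. Recall (as in the introduction) that in $\mathrm{Fl}_n$ the Richardson cell $\calR^{\mathrm{Fl}}_{u',v}$ attached to any pair $u'\le v$ is the disjoint union $\calR^{\mathrm{Fl}}_{u',v}=\bigsqcup_{\mathbf{u}'\prec\mathbf{v}}\calD^{\mathrm{Fl}}_{\mathbf{u}',\mathbf{v}}$ over distinguished subexpressions $\mathbf{u}'$ of $\mathbf{v}$ with product $u'$, and that by construction the Deodhar component of $Gr(k,n)$ indexed by $\mathbf{u}'\prec\mathbf{v}$ is $\calD_{\mathbf{u}',\mathbf{v}}=\pi(\calD^{\mathrm{Fl}}_{\mathbf{u}',\mathbf{v}})$. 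Hence the theorem will follow once we prove that
\[\calR_{u,v}=\bigsqcup_{u'\sim u,\ u'\le v}\pi(\calR^{\mathrm{Fl}}_{u',v})\quad\text{and that this union is disjoint;}\]
applying $\pi$ to Deodhar's decomposition of each $\calR^{\mathrm{Fl}}_{u',v}$ then produces exactly the right-hand side of the theorem, with the superfluous terms $u'\sim u$, $u'\not\le v$ contributing nothing since $\mathbf{v}$ admits no subexpression at all for a permutation not below $v$.

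The geometric heart of the matter is the standard fact that, because $v$ is minimal in its coset, $\pi$ restricts to an isomorphism $X^{\mathrm{Fl}}_v\xrightarrow{\ \sim\ }X^{Gr}_v$ of Schubert varieties. Every relevant piece lives inside $X^{\mathrm{Fl}}_v$: each Richardson cell $\calR^{\mathrm{Fl}}_{u',v}$ with $u'\le v$ lies in the Schubert cell attached to $v$, hence in $X^{\mathrm{Fl}}_v$, and every flag-manifold Deodhar component lies in some such $\calR^{\mathrm{Fl}}_{u',v}$. Consequently $\pi$ is injective on all of these, which at once yields the pairwise disjointness of the images $\pi(\calD^{\mathrm{Fl}}_{\mathbf{u}',\mathbf{v}})$ in the theorem (they are pairwise disjoint upstairs by Deodhar's theorem) as well as the disjointness of the displayed union. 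For the displayed equality: since $\calR^{\mathrm{Fl}}_{u',v}$ is an intersection of a Schubert cell with an opposite Schubert cell, $\pi(\calR^{\mathrm{Fl}}_{u',v})$ is contained in the corresponding intersection downstairs, which — because $u'$ projects to $u$ and $v$ is minimal — is exactly $\calR_{u,v}$; conversely each point of $\calR_{u,v}\subseteq X^{Gr}_v$ lifts uniquely into $X^{\mathrm{Fl}}_v$, and following the two opposed Bruhat stratifications through the isomorphism places the lift into $\calR^{\mathrm{Fl}}_{u',v}$ for the unique $u'$ below $v$ whose coset is that of $u$.

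Two combinatorial points remain. First, in the Go-diagram formalism ``$\mathbf{u}'\prec\mathbf{v}$'' refers to a reading order on the Ferrers shape, so one needs to know that ``distinguished subexpression'' here means the same as in Deodhar's setting regardless of that choice; this is precisely Theorem \ref{thm:reading_order_doesnt_matter}. Second, Lemma \ref{lem:unique_positive_expression} ensures that every $u'\sim u$ with $u'\le v$ carries at least one distinguished subexpression of $\mathbf{v}$ (the positive one), so no inner union in the theorem is empty for a spurious reason and the two sides have identical index sets.

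The step I expect to be the main obstacle is the geometric one: establishing that $\pi\colon X^{\mathrm{Fl}}_v\to X^{Gr}_v$ is an isomorphism compatible with both Schubert stratifications, and then matching the resulting description of $\calR_{u,v}$ with the Pl\"ucker-coordinate description used in this paper (Definition \ref{thm:plucker_coordinates}) — in particular being careful about the conventions attaching permutations and Pl\"ucker coordinates to Ferrers shapes. Everything downstream of that dictionary is formal manipulation of a disjoint union under a map that is injective on the locus at hand.
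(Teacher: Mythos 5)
The paper does not prove this statement at all: it is quoted verbatim as Corollary 1.2 of \cite{deodhar:geometric_aspects} and used as background, so there is no in-paper argument to compare against. Your proposal supplies the derivation that the citation implicitly relies on, namely deducing the Grassmannian statement from Deodhar's decomposition of Richardson cells in the full flag manifold by pushing forward along $\pi\colon \mathrm{Fl}_n\to Gr(k,n)$. The argument is sound: since $v_\lambda$ is the minimal-length representative of its coset (this is exactly the paper's definition of a Grassmannian permutation), $\pi$ restricts to a bijection from the Schubert cell $BvB/B$ onto $BvP/P$; every flag-manifold Richardson cell $\calR^{\mathrm{Fl}}_{u',v}$ and hence every Deodhar component upstairs sits inside that cell, so disjointness survives projection; and the fibered description of $\calR_{u,v}$ as $\bigsqcup_{u'\sim u}\pi(\calR^{\mathrm{Fl}}_{u',v})$ follows from the fact that the unique lift of a point of $\calR_{u,v}$ lands in $B^-u''B/B$ for some $u''$ whose coset must be $uW_P$. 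Your observations that $u'\not\le v$ contributes nothing (any subexpression of $\mathbf{v}$ multiplies to an element $\le v$) and that reading-order independence (Theorem \ref{thm:reading_order_doesnt_matter}) reconciles the diagrammatic and word-theoretic notions of distinguished subexpression are the right bookkeeping. One phrase is misleading: ``the unique $u'$ below $v$ whose coset is that of $u$'' suggests there is only one such $u'$, which is false in general (that is precisely why the outer union over $u'\sim u$ is needed); what is unique is the opposite Schubert cell containing a given lift. Since your displayed formula unions over all $u'\sim u$, this is a wording slip rather than a gap. The only genuinely nontrivial input beyond Deodhar is the standard isomorphism $BvB/B\cong BvP/P$ for $v$ minimal in its coset, which you correctly identify as the load-bearing step.
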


Since the Grassmannian is the disjoint union of its Richardson cells, the Grassmannian is then the disjoint union of its Deodhar components. Though Deodhar components predate Lusztig's notion of positivity in flag varieties, \cite{lusztig:total_positivity}, and Postnikov's concrete description of positroid cells in $Gr_{\geq 0}(k,n)$, \cite{postnikov:total_positivity}, the Deodhard decomposition does refine the positroid stratification and in fact agrees with it when restricted to the positive part of the Grassmannian.

\begin{theorem}[Lemma 11.6 in \cite{marsh:parametrizations}] \label{thm:positive_deodhar_components}
Let $\calD$ be the Deodhar component in $Gr(k,n)$ labelled by the Go-diagram $D$. Then, $\calD \cap Gr_{\geq 0}(k,n)$ is nonempty if and only if $D$ is a \Le-diagram. In this case, let $\calP$ be the positroid cell labelled by $D$. Then,
\begin{displaymath}
\calD \cap Gr_{\geq 0}(k,n) = \calP \cap Gr_{\geq 0}(k,n).
\end{displaymath}
\noindent
Further,
\begin{displaymath}
\calP_{u,v} = \bigsqcup_{{\mathbf{u}} \prec {\mathbf{v}}} \calD_{{\mathbf{u}}, {\mathbf{v}}},
\end{displaymath}
\noindent
where the union is across all distinguished subexpressions for $u$ in ${\mathbf{v}}$.
\end{theorem}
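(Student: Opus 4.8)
The plan is to reduce everything to three inputs available in the excerpt: the decomposition of Richardson cells into Deodhar components (Theorem~\ref{thm:deodhar_decomposes_richardson}), the fact recorded in Section~\ref{sec:background} that the Deodhar decomposition refines the positroid decomposition and that when a diagram indexes both a positroid cell and a Deodhar component the two have the same vanishing Pl\"ucker coordinates, and Postnikov's description of the totally nonnegative Grassmannian \cite{postnikov:total_positivity}: $Gr_{\geq 0}(k,n)$ is the disjoint union of the totally nonnegative parts of its positroid cells, every positroid cell meets $Gr_{\geq 0}(k,n)$, positroid cells are indexed by \Le-diagrams, and on a fixed positroid cell a Pl\"ucker coordinate $\Delta_S$ that is not identically zero is strictly positive on the totally nonnegative part.

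First I would establish the last displayed identity, $\calP_{u,v} = \bigsqcup_{\mathbf{u} \prec \mathbf{v}} \calD_{\mathbf{u},\mathbf{v}}$. Since the Deodhar decomposition refines the positroid decomposition, $\calP_{u,v}$ is a disjoint union of Deodhar components. Each such component lies in the Schubert cell $\calS_v$, hence has word $\mathbf{v}$ and so is of the form $\calD_{\mathbf{u}',\mathbf{v}}$; and each lies in the Richardson cell $\calR_{\bar u, v}$ for $\bar u$ the Grassmannian representative of $u$, so by Theorem~\ref{thm:deodhar_decomposes_richardson} one has $u' \sim u$. Finally $\calD_{\mathbf{u}',\mathbf{v}}$ is the image under $G/B \to Gr(k,n)$ of the corresponding Deodhar piece of the flag manifold, which lies in the flag Richardson variety attached to $(u',v)$, so $\calD_{\mathbf{u}',\mathbf{v}} \subseteq \calP_{u',v}$; as distinct permutations $u' \leq v$ carry distinct \Le-diagrams and hence give distinct (disjoint) positroid cells, only the components with $u' = u$ contribute, which is the claim.

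Next I would treat the first two claims for a \Le-diagram $D$ with subword--word pair $\mathbf{u} \prec \mathbf{v}$ and positroid cell $\calP = \calP_{u,v}$. The inclusion $\calD \cap Gr_{\geq 0}(k,n) \subseteq \calP \cap Gr_{\geq 0}(k,n)$ is immediate from $\calD \subseteq \calP$. For the reverse inclusion, take $x \in \calP \cap Gr_{\geq 0}(k,n)$ and check the four conditions of Definition~\ref{thm:plucker_coordinates}: $\Delta_{I_\lambda}(x) \neq 0$ and $\Delta_S(x) = 0$ for $S \not\geq I_\lambda$ hold because $\calP \subseteq \calS_v$; $\Delta_{I_b}(x) = 0$ for every white-stone box $b$ holds because $\calP$ and $\calD$ have the same vanishing Pl\"ucker coordinates; and for a plus box $b$, $\Delta_{I_b}$ is not identically zero on $\calP$ (being nonzero on the nonempty set $\calD$, cf.\ Theorem~\ref{thm:parameterization_of_deodhar}), so Postnikov's positivity statement forces $\Delta_{I_b}(x) > 0$. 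Hence $x \in \calD$, giving $\calP \cap Gr_{\geq 0}(k,n) = \calD \cap Gr_{\geq 0}(k,n)$; since $\calP$ meets $Gr_{\geq 0}(k,n)$, this set is nonempty, which is the ``if'' direction of the first claim together with the equality of the second. For the ``only if'' direction, note that the computation just made shows $Gr_{\geq 0}(k,n) = \bigsqcup_{D'} \bigl( \calD_{D'} \cap Gr_{\geq 0}(k,n) \bigr)$ with $D'$ running over all \Le-diagrams (each totally nonnegative positroid cell equals $\calD_{D'} \cap Gr_{\geq 0}(k,n)$ for the \Le-diagram $D'$ labelling it, and these exhaust $Gr_{\geq 0}(k,n)$); since distinct Go-diagrams label disjoint Deodhar components, a Go-diagram $D$ that is not a \Le-diagram satisfies $\calD \cap Gr_{\geq 0}(k,n) \subseteq \calD \cap \bigsqcup_{D'} \calD_{D'} = \emptyset$.

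The hard part will be the interface with Postnikov's theory: the step ``not identically zero on the positroid cell $\Rightarrow$ strictly positive on its nonnegative part'' (equivalently, that the matroid of a positroid cell coincides with its nonvanishing pattern) and the statement that $Gr_{\geq 0}(k,n)$ is covered by its positroid cells are exactly what make the above work, and they are not elementary. If one instead wanted an argument not invoking these, the crux would become the direct positivity computation behind \cite{marsh:parametrizations}: that the Deodhar/Marsh--Rietsch parameterization of $\calD$ with all parameters positive lands in $Gr_{\geq 0}(k,n)$ precisely when $\mathbf{u}$ is the positive distinguished subexpression, equivalently that a single black stone forces some Pl\"ucker coordinate to take a sign incompatible with total nonnegativity no matter how the remaining parameters are chosen.
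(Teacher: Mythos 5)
This statement is imported verbatim from Marsh--Rietsch (it is quoted as Lemma~11.6 of \cite{marsh:parametrizations}), and the paper gives no proof of it, so there is no in-paper argument to compare against; I will instead assess your proof on its own terms and against the original source. Your outline is essentially sound, but it routes everything through Postnikov's theory of the totally nonnegative Grassmannian and the Knutson--Lam--Speyer identification of positroid cells with projected Richardson cells, and you correctly identify that this is where all the real content sits: the facts that $Gr_{\geq 0}(k,n)$ is partitioned by the nonnegative parts of positroid cells, that these are indexed by \Le-diagrams, and that a Pl\"ucker coordinate not identically zero on $\calP_{u,v}$ is strictly positive on $\calP_{u,v}\cap Gr_{\geq 0}(k,n)$ are each comparable in depth to the theorem being proved. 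Two smaller caveats: first, your derivation of $\calP_{u,v}=\bigsqcup_{\mathbf{u}\prec\mathbf{v}}\calD_{\mathbf{u},\mathbf{v}}$ opens by invoking ``the Deodhar decomposition refines the positroid decomposition,'' which is very nearly the identity you are trying to prove; this is only non-circular if one takes $\calP_{u,v}$ to be \emph{defined} as the isomorphic projection of the flag Richardson cell (which the paper explicitly permits), in which case the identity follows directly from Deodhar's flag-level decomposition and your detour through Theorem~\ref{thm:deodhar_decomposes_richardson} is unnecessary. Second, your claim that $\Delta_{I_b}$ vanishes on all of $\calP$ for white-stone boxes $b$ needs the observation that $\calD$ is the unique top-dimensional Deodhar component in $\calP$ and hence dense, so that vanishing on $\calD$ propagates to $\calP$. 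By contrast, Marsh--Rietsch prove the result without any positroid machinery, working directly with the explicit factorization parameterization of Deodhar components and Lusztig--Rietsch total positivity: the positive distinguished subexpression gives a parameterization by factors $y_i(t)$ with $t>0$ landing in the nonnegative part, while any black stone introduces a factor whose image provably misses it --- this is exactly the ``direct positivity computation'' you sketch as an alternative in your last sentence, and it is the one the cited source actually carries out.
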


If one wants, they may take Theorem \ref{thm:positive_deodhar_components} as the definition of the positroid strata $\calP_{u,v}$, since we did not explicitly provide this definition.

\section{Corrective flips} \label{sec:corrective_flips}

Throughout this section, stones in diagrams will be colored black if and only if they are uncrossings.

In \cite{lam:total_positivity}, Lam and Williams address the problem of giving a series of local moves on $\circ/+$-diagrams which may be used to transform any reduced diagram into the \Le-diagram corresponding to the same pair of permutations. Such moves are called \Le-moves. They solve this problem in all cominuscule types. In type A, the only case we consider in this paper, \Le-moves are moves of the following form.
\begin{equation} \label{eqn:le-move}
\begin{tikzpicture}
\begin{scope}[scale=.75]
\draw (0,0) -- (4,0) -- (4,3) -- (0,3) -- (0,0);
\draw (1,0) -- (1,1) -- (0,1);
\draw (1,3) -- (1,2) -- (0,2);
\draw (3,3) -- (3,2) -- (4,2);
\draw (3,0) -- (3,1) -- (4,1);
\draw[thick] (0.5,0.2) -- (0.5,0.8);
\draw[thick] (0.2,0.5) -- (0.8,0.5);
\draw[thick] (0.5,2.2) -- (0.5,2.8);
\draw[thick] (0.2,2.5) -- (0.8,2.5);
\draw[thick] (3.5,2.2) -- (3.5,2.8);
\draw[thick] (3.2,2.5) -- (3.8,2.5);
\draw[fill = white] (3.5,.5) circle (.25);

\draw (2,1.5) node {white stones};
\draw[thick, ->] plot [smooth, tension=.5] coordinates {(4.5,1.5) (4.75,1.65) (5,1.7) (5.25,1.65) (5.5,1.5)};
\begin{scope}[xshift=6cm]
\draw (0,0) -- (4,0) -- (4,3) -- (0,3) -- (0,0);
\draw (1,0) -- (1,1) -- (0,1);
\draw (1,3) -- (1,2) -- (0,2);
\draw (3,3) -- (3,2) -- (4,2);
\draw (3,0) -- (3,1) -- (4,1);
\draw[thick] (0.5,0.2) -- (0.5,0.8);
\draw[thick] (0.2,0.5) -- (0.8,0.5);
\draw[thick] (3.5,0.2) -- (3.5,0.8);
\draw[thick] (3.2,0.5) -- (3.8,0.5);
\draw[thick] (3.5,2.2) -- (3.5,2.8);
\draw[thick] (3.2,2.5) -- (3.8,2.5);
\draw[fill = white] (.5,2.5) circle (.25);
\draw (2,1.5) node {white stones};
\end{scope}
\end{scope}
\end{tikzpicture}
\end{equation}
\noindent

The following theorem collects Lemma 4.13, Proposition 4.14, and Theorem 5.3 from \cite{lam:total_positivity}.

\begin{theorem} \label{thm:positroids_via_le_moves}
Let $D$ be a reduced $\circ/+$-diagram.
\begin{itemize}
\item[(i)] If $D'$ is obtained from $D$ via \Le-moves, the associated permutations associated to $D$ and $D'$ are identical.
\item[(ii)] $D$ is a \Le-diagram if and only if no \Le-moves may be applied to it.
\item[(iii)] Any sequence of \Le-moves applied to $D$ terminates in the unique \Le-diagram associated to the same pair of permutations.
\end{itemize}
\end{theorem}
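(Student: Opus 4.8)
The final statement collects Lemma~4.13, Proposition~4.14, and Theorem~5.3 of \cite{lam:total_positivity}; the plan is to prove the three parts in turn, using the dictionary of Section~\ref{sec:background} among $\circ/+$-diagrams, pipe dreams, and subword pairs $\mathbf u\prec\mathbf v$, together with reading-order independence (Theorem~\ref{thm:reading_order_doesnt_matter}). One remark underlies everything: since $D$ carries no black stones, $D$ is reduced if and only if its associated pipe dream has no crossing/uncrossing pair, equivalently the subword $\mathbf u$ read off $D$ is a reduced word for $u$, equivalently $\#(\circ\text{'s in }D)=\ell(u)$.

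For part (i): a \Le-move does not alter the Ferrers shape, so $v$ is automatically preserved and only the invariance of $u$ needs attention. Recall the move selects a white stone $w$ violating the \Le-property, the nearest plus $p_1$ to the left of $w$ in its row and the nearest plus $p_2$ above $w$ in its column, and — when the rectangle $R$ spanned by $w,p_1,p_2$ has all-white interior and a plus in its northwest box $b$ — exchanges the contents of $w$ and $b$. On the associated pipe dream this exchange is a chute move on the wiring diagram, which visibly preserves which wires are joined and hence the boundary permutation $u$; alternatively, choosing a reading order that traverses $R$ last (row by row from the southeast) makes the letters $R$ contributes to $\mathbf u$ into a single contiguous block, and a direct computation with the box-labels shows the block before the move and the block after it are related by Coxeter relations (braid moves and commutations), so again $u$ is unchanged. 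Since the exchange also preserves $\#(\circ\text{'s})$, the resulting diagram satisfies $\#\circ=\ell(u)$ and is therefore again reduced.

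For part (ii): if $D$ is a \Le-diagram, then by Definition~\ref{def:le-diagram}(iii) no white stone of $D$ has both a plus to its left in its row and a plus above it in its column, so the configuration a \Le-move demands never occurs; hence no \Le-move applies. Conversely, if $D$ is not a \Le-diagram then, since it has no black stones, some white stone violates the \Le-property; the task is to produce from such a violation one at which an actual \Le-move can be performed. This step — upgrading an arbitrary \Le-property violation to a ``move-ready'' one, by choosing a suitably minimal violating configuration and arguing that minimality forces the relevant rectangle to have all-white interior and a plus in its northwest box — is the crux of the argument and the part I expect to be the main obstacle.

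For part (iii): for termination I would use the monovariant $\Phi(D)=\sum\#(b^{in})$, summed over all boxes $b$ of $D$ containing a white stone. A \Le-move exchanges a white stone at $w$ for a plus at a box $b$ lying strictly northwest of $w$, and then $w^{in}\subsetneq b^{in}$, so $\Phi$ strictly increases; on the other hand $\Phi\le\#(\circ\text{'s})\cdot|\lambda|$, a quantity fixed under \Le-moves by part (i). Hence every sequence of \Le-moves terminates. A terminal diagram admits no \Le-move, so by part (ii) it is a \Le-diagram, and by part (i) it is attached to the same $(u,v)$ as $D$; since a \Le-diagram corresponds precisely to a positive distinguished subexpression (Definition~\ref{def:le-diagram}(ii)) and such a subexpression is unique (Lemma~\ref{lem:unique_positive_expression}), the terminal diagram is the unique \Le-diagram attached to $(u,v)$, independent of the sequence of moves chosen.
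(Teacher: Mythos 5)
The paper offers no proof of this statement: it is quoted as a package of Lemma 4.13, Proposition 4.14, and Theorem 5.3 of \cite{lam:total_positivity}, so there is no in-paper argument to compare yours against. Judged on its own merits, your outline is sound for parts (i) and (iii). In (i) the pipe-dream check is legitimate and easy to complete: the two pipes meeting at the southeast corner of the rectangle enter at the bottom of its right column and the right of its bottom row, turn at the two elbow corners, and exit at the top of the left column and the left of the top row both before and after the move, crossing exactly once (at the southeast corner before, at the northwest corner after), while every other pipe traverses the rectangle identically; hence $u$ and $\#(\circ$'s$)=\ell(u)$ are preserved and the diagram stays reduced. In (iii) the monovariant $\Phi(D)=\sum\#(b^{in})$ over white-stone boxes $b$ strictly increases and is bounded by the invariant quantity $\#(\circ\text{'s})\cdot|\lambda|$, and the identification of the terminal diagram via part (ii), part (i), and Lemma \ref{lem:unique_positive_expression} is exactly right.

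The genuine gap is the one you flag yourself: the reverse direction of (ii), producing an actual \Le-move from a mere \Le-property violation. This is the substance of Proposition 4.14 of \cite{lam:total_positivity} and cannot be deferred. To close it, among all violating white stones choose one whose rectangle $R$ (spanned by the stone $w$, the nearest plus $p_1$ to its left, and the nearest plus $p_2$ above it; note $R$ lies entirely in the Ferrers shape since it is weakly northwest of $w$) is minimal under containment. If any non-corner box of $R$ contained a plus, then the corresponding box of the bottom row or right column of $R$ (which is white by the choice of $p_1$, $p_2$) would itself violate the \Le-property with a rectangle strictly contained in $R$, contradicting minimality; so all of $R$ except its corners is white. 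If the northwest corner of $R$ contained a white stone, the two pipes crossing at $w$ would travel up the right column and left along the bottom row, turn at the elbows $p_2$ and $p_1$, and cross a second time at that corner, giving a crossing/uncrossing pair and contradicting reducedness of $D$; hence that corner holds a plus and $R$ is exactly the configuration (\ref{eqn:le-move}). Note that your sketch attributes the plus in the northwest corner to minimality alone, but minimality does not supply it --- this is precisely where reducedness enters, and it is the reason the theorem is stated only for reduced diagrams.
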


The goal of this section is to provide an analogous set of moves to transform any $\bullet/\circ/+$-diagram into a Go-diagram. The following example shows that \Le-moves are not sufficient to transform any reduced diagram into a Go-diagram, so additional moves really are needed.

\begin{example}
Consider the following diagram.
\begin{displaymath}
\begin{tikzpicture}
\begin{scope}[scale=.75]
\draw[step=1] (0,0) grid (3,3);
%
\draw[thick] (.5,.2) -- (.5,.8);
\draw[thick] (.2,.5) -- (.8,.5);
\draw[fill = white] (1.5,.5) circle (.25);
\draw[fill = white] (2.5,.5) circle (.25);
%
\draw[fill = black] (.5,1.5) circle (.25);
\draw[thick] (1.5,1.2) -- (1.5,1.8);
\draw[thick] (1.2,1.5) -- (1.8,1.5);
\draw[fill = white] (2.5,1.5) circle (.25);
%
\draw[thick] (.5,2.2) -- (.5,2.8);
\draw[thick] (.2,2.5) -- (.8,2.5);
\draw[fill = black] (1.5,2.5) circle (.25);
\draw[thick] (2.5,2.2) -- (2.5,2.8);
\draw[thick] (2.2,2.5) -- (2.8,2.5);
\end{scope}
\end{tikzpicture}
\end{displaymath}
\noindent
This diagram is not a Go-diagram; the square in the top left corner violates the distinguished property since it should be an uncrossing with the square in the bottom right corner. Further, there are no \Le-moves which may be applied to this diagram.
\end{example}

\begin{definition} \label{def:corrective_flip}
Let $D$ be a $\bullet/\circ/+$-diagram. Given a plus which violates the distinguished property, a {\textit{corrective flip}}:
\begin{itemize}
\item[(i)] switches the plus with either the white stone with which it violates the distinguished property or that stone's uncrossing partner if it exists, then
\item[(ii)] relabels the stones in the diagram so that a stone is black if and only if it is an uncrossing.
\end{itemize}
\end{definition}

\begin{remark}
In \cite{lam:total_positivity}, one of the defining features of \Le-moves is that only two squares change filling during the \Le-move, one from a white stone to a plus and the other from a plus to a white stone. In the case of a $\bullet/\circ/+$-diagram, the coloring of stones white or black should be thought of purely as a pneumonic for which stones correspond to crossings and uncrossings in the pipe dream. When performing a corrective flip, in the pipe dream only two tiles change, one from an elbow piece to a crossing and the other from a crossing to an elbow. The possible change in coloring of other stones in the diagram is a necessary side effect of this two square swap.
\end{remark}

\begin{proposition} \label{prop:corrective_flip_observations}
Let $D$ be a $\bullet/\circ/+$-diagram.
\begin{itemize}
\item[(i)] $D$ is a Go-diagram if and only if there are no available corrective flips.

\item[(ii)] Corrective flips preserve the pair of permutations associated to a diagram.

\item[(iii)] Corrective flips preserve number of black stones, white stones, and pluses in a diagram.

\item[(iv)] Suppose pipes $i$ and $j$ cross at the crossing tile involved in a corrective flip. Then, the only stones which change color when preforming a corrective flip are along pipes $i$ and $j$ on the segments between the plus and crossing tile involved in the flip.

\item[(v)] $D$ can be transformed into a Go-diagram via corrective flips.
\end{itemize}
\end{proposition}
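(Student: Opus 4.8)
The plan is to prove parts (i)--(v) of Proposition~\ref{prop:corrective_flip_observations} more or less in order, since the later parts lean on the earlier ones, with part~(v) being the real content.

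For part~(i): by Definition~\ref{def:go-diagram}, $D$ fails to be a Go-diagram exactly when some box $b$ violates the distinguished property, i.e. $b$ contains a white stone or a plus but $\ell(u_{b^{in}} u_b s_b) < \ell(u_{b^{in}} u_b)$ and $b$ is not black (note a black stone by definition of our coloring convention in this section is precisely an uncrossing, so a white stone can never be the obstruction once the coloring is consistent; the only obstruction is a plus). Since a corrective flip is defined to act precisely on a plus violating the distinguished property, $D$ admits a corrective flip iff some plus violates the distinguished property iff $D$ is not a Go-diagram. (One should double-check the edge case that a plus can violate the distinguished property yet have no white stone it violates ``with''; here I would argue that if $\ell(u_{b^{in}}u_bs_b)<\ell(u_{b^{in}}u_b)$ with $u_b=\varepsilon$ then $s_b$ appears in a reduced word for $u_{b^{in}}$, so some stone box $c\prec b$ contributes $s_b$ and by Theorem~\ref{thm:reading_order_doesnt_matter} we may reorder the reading word to make $c$ the relevant partner, as in the diagrammatic description preceding Definition~\ref{def:set_I_b}.)

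For parts~(ii)--(iv): a corrective flip changes the underlying pipe dream by swapping exactly one elbow tile with one crossing tile that lie on a common crossing/uncrossing pair of pipes $i,j$ — this is the content of the Remark following Definition~\ref{def:corrective_flip}. Along the two pipe segments strictly between the two swapped tiles, the crossings and uncrossings of $i$ with $j$ get shifted, so those (and only those) stones may change color; all other tiles, hence the full pipe dream as an unlabeled object and also the Ferrers shape, are untouched, giving~(iv) and then~(iii) (the multiset of tile types is preserved, and black = uncrossing, white = crossing). For~(ii), the Ferrers shape is unchanged so $v_\lambda$ is unchanged; for the pipe-dream permutation $u$, note that swapping a crossing/uncrossing pair to an uncrossing/crossing pair (equivalently, un-crossing then re-crossing the two pipes) does not change which label exits at which boundary edge — the two pipes $i,j$ simply trade which of them is ``above'' the other between the two tiles — so $u$ is preserved; Theorem~\ref{thm:reading_order_doesnt_matter}(i) then makes this independent of reading order.

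Part~(v) is the main obstacle and the place where a genuine argument is needed: a priori a corrective flip could create new violations, so we need a monovariant showing the process terminates, after which part~(i) forces the terminal diagram to be a Go-diagram. The natural candidate is a statistic measuring ``how far from distinguished'' the diagram is. Fix a reading order and let $\mathbf{u}\prec$-ness be measured by $\sum_i \big(\ell(u_{(i)}) - \ell(u_{(i-1)})\big)_-$ or, more geometrically, by the number of crossing/uncrossing pairs in the pipe dream (equivalently, using Proposition~\ref{prop:length_from_filling}, by $\#\bullet$'s after recoloring — but $\#\bullet$ is invariant by~(iii), so that exact statistic will not do and must be refined). The refinement I expect to work: among all crossing/uncrossing pairs, a corrective flip applied to an \emph{innermost} violating plus $b$ (minimal in the $\preccurlyeq$ order among violators) removes the crossing/uncrossing pair witnessing $b$'s violation and, because we flip with the \emph{uncrossing partner} when it exists, does not create any new crossing/uncrossing pair among the pipes $i,j$ closer to the Southeast boundary; combined with~(iv), which confines all changes to the $i,j$ segments between the two tiles, one shows the number of crossing/uncrossing pairs strictly decreases (or, if ties are possible, that a lexicographically ordered pair (number of crossing/uncrossing pairs, sum of ``depths'' of violating pluses) strictly decreases). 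Since this quantity is a nonnegative integer, the process terminates; by part~(i) the result is a Go-diagram. I anticipate the delicate bookkeeping is precisely the claim that flipping with the uncrossing partner cannot manufacture a new, ``deeper'' crossing/uncrossing pair — this should follow by a local analysis of the three pipes potentially involved (the pair $i,j$ plus any pipe crossing the region between the two swapped tiles) together with the observation that a pipe entering this region must also leave it, so its crossing count with $i$ and with $j$ changes by the same parity.
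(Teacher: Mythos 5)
Parts (i)--(iv) of your proposal are essentially sound, with one small slip: your justification of (iii) --- ``the multiset of tile types is preserved'' together with (iv) --- does not by itself control the \emph{number} of uncrossings, since the stones that change color along pipes $i$ and $j$ could a priori change the black/white count. The clean route (and the paper's) is to note that the number of pluses is preserved and the permutation $u$ is preserved by (ii), so Proposition \ref{prop:length_from_filling} forces the counts of black and white stones to be preserved as well. The genuine gap is in (v), which you rightly call the real content. Your primary monovariant, the number of crossing/uncrossing pairs, provably cannot work: every uncrossing is the terminal box of exactly one crossing/uncrossing pair, so this count equals the number of black stones, which is invariant by (iii). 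You half-notice this yourself, yet still assert that ``one shows the number of crossing/uncrossing pairs strictly decreases,'' which is false. Your fallback lexicographic statistic therefore reduces to its second coordinate, the sum of depths of violating pluses, and you give no argument that this is monotone; it is not clearly so, because rerouting the segments of pipes $i$ and $j$ between the two swapped tiles changes the crossing parities seen at other elbow tiles and can create new violating pluses at unrelated boxes. The ``delicate bookkeeping'' you defer is precisely the missing proof.

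The paper's argument for (v) sidesteps all of this with a much simpler monovariant. Restrict attention to corrective flips that swap the violating plus with the \emph{white stone} (never with its uncrossing partner). Such a flip moves the plus from the box $b$ to a box in $b^{in}$, i.e., strictly downward in the $\preccurlyeq$ order, so for instance the sum over all pluses of their distance from the Northwest corner strictly increases; being bounded, the process terminates. Whenever any corrective flip is available, one of this restricted form is available (the definition always permits choosing the white stone), so at termination no corrective flips exist at all, and (i) finishes the argument. If you want a monovariant valid for \emph{arbitrary} corrective flips --- which is needed for Theorem \ref{thm:corrective_flips_terminate}, though not for (v) --- the paper uses the tuple $\bigoplus_{b \in D} u_{b^{in}}^{D}$ in the product Bruhat order via Lemma \ref{lem:corrective_flips_decrease_length}; that is the statistic you were looking for.
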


\begin{proof}
Points (i), (ii), and (iv) are obvious looking at the pipe dream associated to a diagram. Point (iii) is a consequence of Proposition \ref{prop:length_from_filling} and the fact that a corrective flip preserves the number of pluses in a diagram. For point (v), observe that if we only preform corrective flips switching pluses and white stones, the pluses only move downward. We may preform such flips until no more corrective flips are available, at which point point (i) implies the end result is a Go-diagram.
\end{proof}

\begin{lemma} \label{lem:corrective_flips_decrease_length}
Let $D$ be a $\bullet/\circ/+$-diagram and let $D'$ be obtained from $D$ by performing a corrective flip. Then, $u_{b^{in}}^{D} \geq u_{b^{in}}^{D'}$ for all $b \in D$. 
\end{lemma}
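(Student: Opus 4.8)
The plan is to follow $u_{b^{in}}$ one box $b$ at a time, using that a corrective flip changes the filling of only two boxes. Write $p$ for the violating plus, $w$ for the white stone with which $p$ violates the distinguished property, $q$ for the uncrossing partner of $w$ when it exists, and $\{x,y\}$ for the two boxes whose fillings the flip interchanges, so that $\{x,y\}=\{p,w\}$ or $\{x,y\}=\{p,q\}$; take $x$ to be the more northwesterly of the two. Let $i$ and $j$ be the two pipes reaching $p$ from below and from the right; by Proposition~\ref{prop:corrective_flip_observations}(iv) these are exactly the pipes crossing at the crossing tile of the flip. The features of a corrective flip I would establish first are: $w$ lies strictly southeast of $p$ (both $i$ and $j$ pass through $w$, but no box in the row or the column of $p$ lies on both pipes), $q$ lies strictly northwest of $p$ (the same argument, past the bounce of $i$ and $j$ at $p$), the bounce of $i$ and $j$ at $p$ falls strictly between the consecutive $i$--$j$ crossings $w$ and $q$, and --- since consecutive crossings of a fixed pair of pipes alternate between crossings and uncrossings --- the white stone $w$ has an even number of $i$--$j$ crossings to its southeast. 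Also, since $u_{b^{in}}$ depends only on which boxes of $b^{in}$ hold stones, the recolouring step of Definition~\ref{def:corrective_flip} does not affect the comparison.

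As $x$ is weakly northwest of $y$, for each box $b$ exactly one of three things holds: $b^{in}$ contains neither of $x,y$; $b^{in}$ contains both; or $b^{in}$ contains $y$ but not $x$ (note $x\in b^{in}$ forces $y\in b^{in}$, since $b^{in}$ is closed under moving southeast). If $b^{in}$ contains neither, then $D$ and $D'$ induce the same pipe dream on $b^{in}$ and $u_{b^{in}}^{D}=u_{b^{in}}^{D'}$. If $b^{in}$ contains both, pick a reading order of $D$ in which all of $b^{in}$ comes first (possible because $b^{in}$ is a down-set for $\preccurlyeq$); then $u_{D}=u_{b^{in}}^{D}\cdot R$, where $R$ is the product of the transpositions of the remaining stones in that order, and likewise $u_{D'}=u_{b^{in}}^{D'}\cdot R$ with the same word $R$, since the flip left those boxes alone. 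As corrective flips preserve the permutation (Proposition~\ref{prop:corrective_flip_observations}(ii)), $u_{D}=u_{D'}$, and cancellation gives $u_{b^{in}}^{D}=u_{b^{in}}^{D'}$; here the independence of $u_{b^{in}}$ from the reading order is Theorem~\ref{thm:reading_order_doesnt_matter}.

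The substance is the case $y\in b^{in}$, $x\notin b^{in}$. Here $D$ and $D'$ induce pipe dreams on $b^{in}$ that agree outside the box $y$, where one has a crossing tile and the other an elbow; toggling a single tile between crossing and elbow alters the pipe-dream permutation by left-multiplication by a reflection, so $u_{b^{in}}^{D}=t\,u_{b^{in}}^{D'}$ for a reflection $t$. By the standard fact that, for a reflection $t$, the elements $\sigma$ and $t\sigma$ are comparable in Bruhat order with the larger the one of greater length, it now suffices to prove $\ell(u_{b^{in}}^{D})\ge\ell(u_{b^{in}}^{D'})$. I would prove this by counting the crossings of the pair $i,j$ within $b^{in}$. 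Their crossings southeast of $w$ all lie in $b^{in}$ (which contains $w$ and is closed under moving southeast) and are even in number; $q$ and every $i$--$j$ crossing beyond $q$ lie outside $b^{in}$ (this is where the hypothesis $x\notin b^{in}$ is used); and no $i$--$j$ crossing lies strictly between $w$ and $q$. Checking the two admissible flips, in both cases $D|_{b^{in}}$ has $i$ and $j$ crossing an odd number of times --- that even number southeast of $w$, plus the crossing at $w$, which is present in $D|_{b^{in}}$ in either flip --- while $D'|_{b^{in}}$, differing from $D|_{b^{in}}$ by the single toggled tile at $y$, has them crossing an even number of times. Hence $\{i,j\}$ is an inversion of $u_{b^{in}}^{D}$ but not of $u_{b^{in}}^{D'}$, and since the toggle at $y$ affects no other pipe pair, $\ell(u_{b^{in}}^{D})=\ell(u_{b^{in}}^{D'})+1$. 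The reflection comparison then gives $u_{b^{in}}^{D}>u_{b^{in}}^{D'}$.

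The main obstacle is this third case --- establishing the geometry of a corrective flip (the strict northwest/southeast placement of $w$ and of $w$'s uncrossing partner relative to $p$, and the absence of $i$--$j$ crossings between consecutive special tiles) precisely enough to make the parity count rigorous, and handling the two admissible flips uniformly. The other two cases are routine manipulations of reading orders, using only Theorem~\ref{thm:reading_order_doesnt_matter}, Proposition~\ref{prop:corrective_flip_observations}(ii), and the elementary reflection fact in the Bruhat order.
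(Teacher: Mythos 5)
Your proof is correct and takes the same route as the paper's, which merely asserts the three cases of the trichotomy (both flipped boxes in $b^{in}$, neither, or only the more southeasterly one); you supply the parity argument that the paper omits, and you get the sign right where the paper's own proof has a typo (its third case reads $u_{b^{in}}^{D} < u_{b^{in}}^{D'}$ where it must mean $>$). One side remark is inaccurate: toggling the tile at $y$ swaps the two outgoing trajectories between pipes $i$ and $j$, so the crossing parities of pairs $(i,k)$ and $(j,k)$ for other pipes $k$ can also change and $\ell(u_{b^{in}}^{D})-\ell(u_{b^{in}}^{D'})$ need not equal $1$; this is harmless, though, because the facts you do establish --- that $u_{b^{in}}^{D}$ and $u_{b^{in}}^{D'}$ differ by the reflection $(i,j)$ and that $\{i,j\}$ is an inversion of the former but not the latter --- already force $\ell(u_{b^{in}}^{D})>\ell(u_{b^{in}}^{D'})$ and hence $u_{b^{in}}^{D}>u_{b^{in}}^{D'}$ by the reflection comparison you quote.
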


\begin{proof}
Let $c$ and $d$ be the boxes participating in the corrective flip and suppose that $c \prec d$. If $c \notin b^{in}$ or $d \in b^{in}$, then $u_{b^{in}}^{D} = u_{b^{in}}^{D'}$. If $c \in b^{in}$ and $d \notin b^{in}$, then $u_{b^{in}}^{D} < u_{b^{in}}^{D'}$.
\end{proof}

In fact, a converse to Lemma \ref{lem:corrective_flips_decrease_length} holds as well.

\begin{proposition} \label{prop:only_corrective_flips_decrease_length}
Let $D$ and $D'$ be $\bullet/\circ/+$-diagrams with the same associated pair of permutations and suppose that $D'$ is obtained from $D$ by exchanging a single elbow piece and crossing tile in the associated pipe dreams. If $u_{b^{in}}^{D} \geq u_{b^{in}}^{D'}$ for all $b \in D$, then $D'$ was obtained from $D$ by performing a corrective flip.
\end{proposition}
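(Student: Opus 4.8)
The plan is to match the given exchange against Definition \ref{def:corrective_flip}, using the hypothesis to rule out the two ways it could fail to be a corrective flip: that it runs opposite to one, or that it corresponds to no corrective flip at all.

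Write $x$ for the box holding a plus in $D$ and a stone in $D'$, and $y$ for the box holding a stone in $D$ and a plus in $D'$. Since $D$ and $D'$ induce the same permutation while their pipe dreams agree off $\{x,y\}$, the two pipes $p,q$ through the elbow are exactly the two pipes through the crossing tile: each of the two tile changes alters the permutation by transposing two labels along the Northwest boundary, and a product of two transpositions is trivial only when they coincide, which forces the two pairs of pipes to agree; moreover $p$ and $q$ do not cross strictly between $x$ and $y$ in the unchanged part. As $p$ and $q$ flow monotonically Northwest, one of $x,y$ lies weakly Northwest of the other, and since $b^{in}$ is the rectangle of boxes weakly Southeast of $b$, this makes $x$ and $y$ comparable in $\preccurlyeq$; say $y\prec x$ (the case $x\prec y$ being symmetric after interchanging $D$ and $D'$). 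I will use two identities obtained by reading $b$ last in a reading order of $b^{in}$, which is legitimate since $b$ is $\preccurlyeq$-maximal there: for a stone $b$ one has $u_{b^{in}}=w\,s_b$, so $b$ is an uncrossing exactly when $\ell(u_{b^{in}}s_b)>\ell(u_{b^{in}})$; and if $b$ is the only box of $b^{in}$ whose filling changed, then $u_{b^{in}}^{D'}=u_{b^{in}}^{D}\,s_b$, whatever the direction of the change. Since $y\prec x$ gives $x\notin y^{in}$ but $y\in x^{in}$, the first consequence is $u_{y^{in}}^{D'}=u_{y^{in}}^{D}\,s_y$, while at $b=x$ both altered boxes lie in $x^{in}$, so factoring a reading order through $x^{in}$ (the portion outside is unchanged and the whole product is $u$ in both) yields $u_{x^{in}}^{D}=u_{x^{in}}^{D'}$.

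The core is a trichotomy: either $D\to D'$ is a corrective flip, or $D'\to D$ is, or neither. Here ``$D\to D'$ is a corrective flip'' is equivalent to ``$x$ is a plus violating the distinguished property in $D$'': if $x$ violates the distinguished property, then a look at the crossings of $p$ and $q$ inside $x^{in}$ identifies $y$ as the crossing of $p,q$ nearest $x$, hence as the white stone with which $x$ violates the distinguished property --- or, when that nearest crossing is a white stone deeper in $x^{in}$ and $y$ is its uncrossing partner, as that partner --- so the exchange at $\{x,y\}$ is precisely the flip of Definition \ref{def:corrective_flip}; the converse is immediate, and symmetrically ``$D'\to D$ is a corrective flip'' is equivalent to ``$y$ violates the distinguished property in $D'$.'' Now if $D'\to D$ were a corrective flip, Lemma \ref{lem:corrective_flips_decrease_length} would give $u_{b^{in}}^{D'}\ge u_{b^{in}}^{D}$ for all $b$, which with the hypothesis $u_{b^{in}}^{D}\ge u_{b^{in}}^{D'}$ forces $u_{b^{in}}^{D}=u_{b^{in}}^{D'}$ for every $b$ --- impossible since $u_{y^{in}}^{D'}=u_{y^{in}}^{D}\,s_y\neq u_{y^{in}}^{D}$. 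And if neither direction were a corrective flip, then $x$ does not violate the distinguished property in $D$ and $y$ does not violate it in $D'$; by the two identities above this makes $x$ an uncrossing in $D'$, necessarily paired with a crossing $z$ of $p,q$ lying strictly between $x$ and $y$, and I would then exhibit a box $b$ with $y,z\in b^{in}$ but $x\notin b^{in}$ at which deleting the stone $y$ inside $b^{in}$ fails to decrease length, i.e.\ $u_{b^{in}}^{D}\ngeq u_{b^{in}}^{D'}$, against the hypothesis. Hence $D\to D'$ is a corrective flip.

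The step I expect to be the real work is the pipe-dream analysis inside the trichotomy: pinning down the two pipes $p,q$, and the locations and crossing-versus-uncrossing status of their crossings inside $x^{in}$, inside $y^{in}$, and between $x$ and $y$, in both $D$ and $D'$, and then matching this picture against the precise wording of Definition \ref{def:corrective_flip} --- in particular identifying $y$ with ``the white stone with which the plus violates the distinguished property'' or with ``that stone's uncrossing partner,'' and, in the ``neither'' case, producing the box $b$ at which the length inequality breaks. The exclusion of the reverse direction via Lemma \ref{lem:corrective_flips_decrease_length}, and all of the length bookkeeping, are routine.
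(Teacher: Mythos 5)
Your overall strategy---pin down the geometry of the exchanged pair and match it against Definition \ref{def:corrective_flip}---is the same in spirit as the paper's, but as written the argument has gaps at exactly the points where the hypothesis $u_{b^{in}}^{D}\geq u_{b^{in}}^{D'}$ has to do the work. First, the reduction ``say $y\prec x$, the case $x\prec y$ being symmetric after interchanging $D$ and $D'$'' is not legitimate: interchanging $D$ and $D'$ reverses the hypothesis and turns the desired conclusion into the wrong statement. Both relative positions genuinely occur and are not interchangeable: in the paper's proof the exchanged stone lies Southeast of the plus exactly when it is a crossing (white), and Northwest of it exactly when it is an uncrossing (black); the latter is the case in which the corrective flip exchanges the plus with ``that stone's uncrossing partner,'' and it needs its own parallel---but not symmetric---argument. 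Second, your claim that $p$ and $q$ ``do not cross strictly between $x$ and $y$ in the unchanged part'' does not follow from preservation of the permutation (they may cross an even number of times there); this is precisely what must be extracted from the length hypothesis, as the paper does by noting that an uncrossing partner $e$ of the crossing with $c\prec e\prec d$ would force $u_{b^{in}}^{D}<u_{b^{in}}^{D'}$ for $e\prec b\prec d$. Consequently your asserted equivalence ``$D\to D'$ is a corrective flip if and only if $x$ is a plus violating the distinguished property in $D$'' is false as a standalone statement: $x$ can violate the distinguished property while $y$ is a crossing of $p,q$ that is neither the stone $x$ violates with nor its uncrossing partner. That sub-case falls through your trichotomy: it is excluded from the first branch by the unproved identification of $y$ as the nearest crossing, and from the ``neither'' branch, which assumes $x$ does not violate the distinguished property.

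Moreover, the one place where you do propose to invoke the hypothesis---producing, in the ``neither'' branch, a box $b$ with $u_{b^{in}}^{D}\ngeq u_{b^{in}}^{D'}$---is left as a promissory note (``I would then exhibit\dots''), and you yourself flag it as the real work. That computation, together with its analogues fixing the relative position of the two boxes and excluding intermediate crossing tiles of $p$ and $q$, is essentially the entire content of the paper's short two-case proof; without it the proposal establishes only the easy implications. The exclusion of the reverse flip via Lemma \ref{lem:corrective_flips_decrease_length} is correct but becomes unnecessary once the two cases are argued directly.
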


\begin{proof}
If $D$ and $D'$ give the same pair of permutations and differ by exchanging a single elbow piece and crossing tile in their pipe dreams, the elbow piece and crossing tile exchanged must involve the same pair of pipes. Say the exchanged crossing tile is in box $c$ and the elbow piece is in box $d$. If $c$ contained a white stone in $D$, we must have $c \prec d$, otherwise $u_{b^{in}}^{D} < u_{b^{in}}^{D'}$ for any box $d \prec b \prec c$.  If there were some box $c \prec e \prec d$ which was an uncrossing pair with $c$ in $D$, then $u_{b^{in}}^{D} < u_{b^{in}}^{D'}$ for any box $e \prec b \prec d$. So, in this case $D'$ is obtained from $D$ via a corrective flip. If $c$ contained a black stone in $D$, we must have $d \prec c$, otherwise $u_{b^{in}}^{D} < u_{b^{in}}^{D'}$ for any box $c \prec b \prec d$. In this case, if $d$ had a crossing pair $e$ with $d \prec e \prec c$, then $u_{b^{in}}^{D} < u_{b^{in}}^{D'}$ for any box $c \prec b \prec e$.
\end{proof}

\begin{theorem} \label{thm:corrective_flips_terminate}
Every sequence of corrective flips terminates in a Go-diagram.
\end{theorem}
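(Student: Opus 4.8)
The plan is to attach to each $\bullet/\circ/+$-diagram a non-negative integer that strictly decreases under every corrective flip. Concretely, I would set
\[
N(D) \;=\; \sum_{b \in D} \ell\!\left(u_{b^{in}}^{D}\right),
\]
which is a non-negative integer since each $\ell(u_{b^{in}}^{D}) \le \binom{n}{2}$. Granting the key inequality $N(D') < N(D)$ whenever $D'$ is obtained from $D$ by a single corrective flip, any sequence $D_0, D_1, D_2, \dots$ in which each term is obtained from its predecessor by a corrective flip satisfies $N(D_0) > N(D_1) > \cdots \ge 0$ and is therefore finite; its final diagram admits no corrective flip, hence is a Go-diagram by Proposition~\ref{prop:corrective_flip_observations}(i). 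So all the content is in the strict decrease.

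For that, the weak inequality $N(D') \le N(D)$ is immediate from Lemma~\ref{lem:corrective_flips_decrease_length}, which gives $u_{b^{in}}^{D} \ge u_{b^{in}}^{D'}$ in Bruhat order --- hence $\ell(u_{b^{in}}^{D}) \ge \ell(u_{b^{in}}^{D'})$ --- for every box $b$. To extract a strict drop in some summand, I would let $P$ and $Q$ be the two boxes whose filling the flip exchanges between a stone and a plus (the remaining effect of the flip only recolors stones, and so does not affect any $u_{b^{in}}$). As in the proof of Lemma~\ref{lem:corrective_flips_decrease_length}, $P$ and $Q$ are comparable in $\preccurlyeq$, because the swapped crossing and elbow lie on a common pair of pipes; name them so that $c \prec d$. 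I then test the box $b = c$: since $d \not\preccurlyeq c$ we have $d \notin c^{in}$, so $c$ is the unique box of $c^{in}$ whose stone-or-plus status differs between $D$ and $D'$; and since $c$ is the $\preccurlyeq$-maximal box of the Ferrers shape $c^{in}$, a reading order on $c^{in}$ that reads $c$ last gives $u_{c^{in}}^{D} = w\, u_c^{D}$ and $u_{c^{in}}^{D'} = w\, u_c^{D'}$ for one and the same $w$. Because exactly one of $u_c^{D}, u_c^{D'}$ equals $s_c$ and the other equals $\varepsilon$, the permutations $u_{c^{in}}^{D}$ and $u_{c^{in}}^{D'}$ differ by right multiplication by $s_c$, so their lengths differ by exactly $1$; combined with $u_{c^{in}}^{D} \ge u_{c^{in}}^{D'}$ this forces $\ell(u_{c^{in}}^{D}) = \ell(u_{c^{in}}^{D'}) + 1$, and therefore $N(D') \le N(D) - 1$.

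I expect the only genuinely delicate point to be this strict-decrease step, and within it the verification that $u_{c^{in}}$ really changes at the smaller exchanged box $c$ rather than merely dropping in the weak sense guaranteed by Lemma~\ref{lem:corrective_flips_decrease_length} at every box; this rests on the two structural facts used above --- $\preccurlyeq$-comparability of the two exchanged boxes, and $\preccurlyeq$-maximality of $c$ inside $c^{in}$ --- together with the reading-order independence of $u_{c^{in}}$ from Theorem~\ref{thm:reading_order_doesnt_matter}. Everything else, namely finiteness via the integer monovariant $N$ and recognition of the terminal diagram through Proposition~\ref{prop:corrective_flip_observations}(i), is formal.
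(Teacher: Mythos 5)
Your proof is correct and follows essentially the same route as the paper: the paper uses the tuple $\tau(D)=\bigoplus_{b\in D}u_{b^{in}}^{D}$ in the product of Bruhat posets as a strictly decreasing monovariant (via Lemma~\ref{lem:corrective_flips_decrease_length}) and then invokes Proposition~\ref{prop:corrective_flip_observations}(i), while you replace $\tau$ by the numerical shadow $\sum_{b}\ell\bigl(u_{b^{in}}^{D}\bigr)$ and supply the strict drop explicitly at the smaller exchanged box. The extra detail you give for the strict decrease (comparability of the two exchanged boxes and the length change by exactly one at $b=c$) is exactly what the paper's Lemma~\ref{lem:corrective_flips_decrease_length} asserts more tersely, so the arguments coincide in substance.
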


\begin{proof}
To a diagram $D$, we associate the tuple
\begin{displaymath}
\tau(D) = \bigoplus_{b \in D} u_{b^{in}}^{D} \in \bigoplus_{b \in D} \mathfrak{S}_n.
\end{displaymath}
\noindent
We endow $\bigoplus_{b \in D} \mathfrak{S}_n$ with the product partial order obtained from the Bruhat orders on each copy of $\mathfrak{S}_n$. Let $D'$ be obtained from $D$ by performing a corrective flip. Lemma \ref{lem:corrective_flips_decrease_length} implies $\tau(D') < \tau(D)$. So, any sequence of corrective flips must terminate. Then, point (ii) in Proposition \ref{prop:corrective_flip_observations} implies any sequence of corrective flips terminates in a Go-diagram.
\end{proof}

Unlike point (iii) in Theorem \ref{thm:positroids_via_le_moves}, there might be more than one Go-diagram obtainable from a $\bullet/\circ/+$-diagram via corrective flips. Consider the $\bullet/\circ/+$-diagram

\begin{equation} \label{eqn:boundary_example_after_flip}
\begin{tikzpicture}
\begin{scope}[scale=.75]
\draw[step = 1] (0,0) grid (3,3);
%
\draw[thick] (.5,.2) -- (.5,.8);
\draw[thick] (.2,.5) -- (.8,.5);
\draw[thick] (1.5,.2) -- (1.5,.8);
\draw[thick] (1.2,.5) -- (1.8,.5);
\draw[fill = white] (2.5,.5) circle (.25);
%
\draw[thick] (.5,1.2) -- (.5,1.8);
\draw[thick] (.2,1.5) -- (.8,1.5);
\draw[thick] (1.5,1.2) -- (1.5,1.8);
\draw[thick] (1.2,1.5) -- (1.8,1.5);
\draw[thick] (2.5,1.2) -- (2.5,1.8);
\draw[thick] (2.2,1.5) -- (2.8,1.5);
%
\draw[fill = black] (.5,2.5) circle (.25);
\draw[thick] (1.5,2.2) -- (1.5,2.8);
\draw[thick] (1.2,2.5) -- (1.8,2.5);
\draw[thick] (2.5,2.2) -- (2.5,2.8);
\draw[thick] (2.2,2.5) -- (2.8,2.5);
\draw (3.2,.1) node {,};
\end{scope}
\end{tikzpicture}
\end{equation}
\noindent
which is not a Go-diagram. Using corrective flips, it may be transformed into either
\begin{equation} \label{eqn:boundary_example_after_correcting}
\begin{tikzpicture}
\begin{scope}[scale=.75]
\draw[step = 1] (0,0) grid (3,3);
%
\draw[thick] (.5,.2) -- (.5,.8);
\draw[thick] (.2,.5) -- (.8,.5);
\draw[thick] (1.5,.2) -- (1.5,.8);
\draw[thick] (1.2,.5) -- (1.8,.5);
\draw[fill = white] (2.5,.5) circle (.25);
%
\draw[thick] (.5,1.2) -- (.5,1.8);
\draw[thick] (.2,1.5) -- (.8,1.5);
\draw[fill = black] (1.5,1.5) circle (.25);
\draw[thick] (2.5,1.2) -- (2.5,1.8);
\draw[thick] (2.2,1.5) -- (2.8,1.5);
%
\draw[thick] (.5,2.2) -- (.5,2.8);
\draw[thick] (.2,2.5) -- (.8,2.5);
\draw[thick] (1.5,2.2) -- (1.5,2.8);
\draw[thick] (1.2,2.5) -- (1.8,2.5);
\draw[thick] (2.5,2.2) -- (2.5,2.8);
\draw[thick] (2.2,2.5) -- (2.8,2.5);
\draw (4,1.5) node {or};
\begin{scope}[xshift=5cm]
\draw[step = 1] (0,0) grid (3,3);
%
\draw[thick] (.5,.2) -- (.5,.8);
\draw[thick] (.2,.5) -- (.8,.5);
\draw[thick] (1.5,.2) -- (1.5,.8);
\draw[thick] (1.2,.5) -- (1.8,.5);
\draw[thick] (2.5,.2) -- (2.5,.8);
\draw[thick] (2.2,.5) -- (2.8,.5);
%
\draw[thick] (.5,1.2) -- (.5,1.8);
\draw[thick] (.2,1.5) -- (.8,1.5);
\draw[fill = white] (1.5,1.5) circle (.25);
\draw[thick] (2.5,1.2) -- (2.5,1.8);
\draw[thick] (2.2,1.5) -- (2.8,1.5);
%
\draw[fill = black] (.5,2.5) circle (.25);
\draw[thick] (1.5,2.2) -- (1.5,2.8);
\draw[thick] (1.2,2.5) -- (1.8,2.5);
\draw[thick] (2.5,2.2) -- (2.5,2.8);
\draw[thick] (2.2,2.5) -- (2.8,2.5);
\draw (3.2,.1) node {.};
\end{scope}
\end{scope}
\end{tikzpicture}
\end{equation}

One could remove this aspect of free will from the definition of corrective flip, for instance by defining corrective flips to only switch pluses and white stones. However, we find Definition \ref{def:corrective_flip} is the correct choice of definition given Proposition \ref{prop:only_corrective_flips_decrease_length} and the role corrective flips play in the boundary structure of Deodhar components, described in Section \ref{sec:boundaries}.

\begin{remark}
The set of corrective flips as described is not a minimal set of moves with the properties described in Proposition \ref{prop:corrective_flip_observations} and Theorem \ref{thm:corrective_flips_terminate}. If one wanted a smaller set of moves with these properties, they could consider only corrective flips such that the elbow and crossing pieces being switched in the pipe dream have no other elbow pieces between them involving the same pair of pipes. However, even this set of moves isn't minimal: restricted to reduced diagrams, it is a strictly larger set of moves than the set of \Le-moves. It might be interesting to describe a set of corrective flips which is minimal and whose specialization to reduced diagrams is exactly the set of \Le-moves.
\end{remark}

\section{Boundaries} \label{sec:boundaries}

Throughout this section, stones in diagrams will be colored black if and only if they are uncrossings.

The main theorem of this section describes a particular instance of when there is a containment of closures of Deodhar components, $\overline{\calD'} \subset \overline{\calD}$, within a Schubert cell.

\begin{theorem} \label{thm:diagrammatic_boundaries}
Let $D$ and $D'$ be Go-diagrams with the same Ferrers shape indexing Deodhar components $\calD$ and $\calD'$. Then, $\overline{\calD'} \subset \overline{\calD}$ with $\dim(\calD) = \dim(\calD') + 1$ if $D$ is obtained by $D'$ by:
\begin{itemize}
\item[(i)] Choosing a crossing/uncrossing pair in $D'$,
\item[(ii)] replacing the two stones in this pair with pluses and relabelling the other stones in the diagram such that stone is colored black if and only if it is an uncrossing, then
\item[(iii)] performing corrective flips.
\end{itemize}
\noindent
or by:
\begin{itemize}
\item[(i)] Choosing a white stone without an uncrossing pair in $D'$ such that replacing this stone with a plus decreases the length of the diagram's permutation by exactly one,
\item[(ii)] replacing this white stone with a plus and relabelling the other stones in the diagram such that stone is colored black if and only if it is an uncrossing, then
\item[(iii)] performing corrective flips.
\end{itemize}
\end{theorem}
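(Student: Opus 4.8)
The plan is to treat the dimension equality and the closure containment separately, and to reduce the latter to the explicit parameterization of Deodhar components.

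\emph{Dimension.} Both constructions have the shape ``take $D'$, turn one or two stones into pluses and recolour so that a stone is black exactly when it is an uncrossing, obtaining a $\bullet/\circ/+$-diagram $D''$, then apply corrective flips to reach $D$.'' By Proposition~\ref{prop:corrective_flip_observations}(ii)--(iii) the corrective flips change neither the associated pair of permutations nor the numbers $\#\bullet,\#\circ,\#+$, so it is enough to compare $D'$ with $D''$. In the crossing/uncrossing case, collapsing a crossing/uncrossing pair to two elbow tiles is a reduction move on pipe dreams and so leaves $u$ unchanged; since the total number of stones drops by two, Proposition~\ref{prop:length_from_filling} forces $\#\circ$ and $\#\bullet$ each to drop by one and $\#+$ to rise by two, so $\#\bullet+\#+$ rises by one. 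In the lone white stone case, the hypothesis says deleting the stone lowers $\ell(u)$ by exactly one, and since the total number of stones drops by one, Proposition~\ref{prop:length_from_filling} forces $\#\bullet$ to be unchanged, $\#\circ$ to drop by one, and $\#+$ to rise by one, so again $\#\bullet+\#+$ rises by one. Theorem~\ref{thm:parameterization_of_deodhar} now gives $\dim\calD=\dim\calD'+1$, and in particular $\calD\ne\calD'$, so it will suffice to prove $\calD'\subseteq\overline{\calD}$.

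\emph{Containment.} I would use the Marsh--Rietsch parameterization underlying Theorem~\ref{thm:parameterization_of_deodhar} (\cite{marsh:parametrizations}, see also \cite{talaska:network_parameterizations}): having fixed the reduced word $\mathbf v$ from the common Ferrers shape and a reading order, $\calD$ is the image in $Gr(k,n)$ of a product of matrices with one factor per letter of $\mathbf v$ --- the fixed Weyl representative $\dot s$ at each $\circ$, a factor from a one-parameter family with parameter ranging over $\rr$ at each $\bullet$, and a factor from a one-parameter subgroup with parameter in $\rr\setminus\{0\}$ at each $+$ --- and likewise for $\calD'$. Given $p'\in\calD'$ I want a curve $s\mapsto p_s\in\calD$ for small $s>0$ with $p_s\to p'$, and I would build it by reading the construction of $D$ backwards. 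The limit comes from the degeneration step: in the crossing/uncrossing case the two positions of the pair carry a fixed $\dot s$ and a $\bullet$-factor in $\calD'$ but two $+$-factors in $D''$, and because those tiles form a crossing/uncrossing pair the sub-product over the window between them degenerates --- letting one $\rr\setminus\{0\}$ parameter tend to its limiting value while suitably rescaling the other --- precisely onto $\dot s\cdots(\bullet\text{-factor})$; in the lone white stone case a single $+$-factor degenerates onto the corresponding $\dot s$. The corrective flips, which sit between $D''$ and $D$, are then transported along: each swaps one elbow and one crossing tile, and by Lemma~\ref{lem:corrective_flips_decrease_length} together with Proposition~\ref{prop:only_corrective_flips_decrease_length} it does so in the direction that makes every $u_{b^{in}}$ drop, which is exactly the compatibility needed to push the degenerating curve through the flip while keeping it inside $\calD$. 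Chaining these, $p'$ is a limit of points of $\calD$, whence $\calD'\subseteq\overline{\calD}$ and so $\overline{\calD'}\subseteq\overline{\calD}$.

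\emph{The main obstacle.} The delicate point is exactly where the degeneration and the corrective flips meet: the intermediate diagram $D''$ need not be a Go-diagram, so its matrix product is not literally a Deodhar cell, and the corrective flips genuinely change the distinguished subexpression --- different choices of flips land on genuinely different Go-diagrams $D$, as the example around \eqref{eqn:boundary_example_after_correcting} shows, so the argument must work for each of them. Thus one must check both that every $\rr\setminus\{0\}$ parameter along the curve stays nonzero and that the limit point lies in $\calD'$ rather than in some other stratum of $\overline{\calD}$. I expect the cleanest way to control this is to run the argument against the Pl\"ucker-coordinate description of Definition~\ref{thm:plucker_coordinates}: the closed conditions $\Delta_{I_b}=0$ over the white-stone boxes $b$ of $D$, together with $\Delta_S=0$ for $S\not\ge I_\lambda$, cut out a closed set containing $\overline{\calD}$, and one verifies directly --- using Proposition~\ref{prop:I_b_again} and the fact that the sets $I_b$ of adjacent boxes differ in a single element --- that these vanishings persist along the curve and in the limit, while the surviving non-vanishing Pl\"ucker coordinates pin the limit down to $\calD'$ and force it to be generic there, so that as $p'$ ranges over $\calD'$ one sweeps out all of $\calD'$.
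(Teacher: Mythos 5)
Your dimension count is fine --- in fact more careful than the paper's, which just asserts it --- and your instinct at the end to fall back on the Pl\"ucker-coordinate description of Definition~\ref{thm:plucker_coordinates} is exactly the route the paper takes. But the containment argument as written has a genuine gap, and you essentially flag it yourself. The curve-based degeneration through the Marsh--Rietsch factorization is never actually carried out: ``push the degenerating curve through the flip while keeping it inside $\calD$'' is precisely the hard step, because a corrective flip changes the distinguished subexpression, so the parameterizations of $\calD$ and of the intermediate diagram $D''$ place their $\rr$- and $(\rr\setminus\{0\})$-factors in different boxes; there is no evident map of parameter spaces to transport the curve along, and Lemma~\ref{lem:corrective_flips_decrease_length} and Proposition~\ref{prop:only_corrective_flips_decrease_length} say nothing about matrix factorizations. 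The pivot to Pl\"ucker coordinates then replaces this with ``one verifies directly\dots that these vanishings persist,'' which is the entire content of the theorem, and the tools you name for that verification (Proposition~\ref{prop:I_b_again} and the fact that adjacent boxes' sets $I_b$ differ in one element) are not the ones that make it work.

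What the verification actually requires, and what the paper supplies, is the following chain. One reduces to showing that $\Delta_{I_b^{D}}$ vanishes uniformly on $\calD'$ for every box $b$ carrying a white stone in $D$. The inequality $u_{b^{in}}^{D'} \geq u_{b^{in}}^{D''} \geq u_{b^{in}}^{D}$ (the second step from Lemma~\ref{lem:corrective_flips_decrease_length}) gives $I_b^{D'} \leq I_b^{D}$; but that alone is not enough, since $\Delta_S$ need not vanish on $\calD'$ merely because $S > I_b^{D'}$. The decisive input is Proposition~\ref{prop:large_sets_vanish} together with the strict comparison $I_b^{D} > J_b^{D'}$, which must be established by a case analysis on whether $b$ held a white stone, a black stone, or a plus in $D'$ and on how the pipes through $b$ move during a corrective flip --- this is Lemma~\ref{lem:technical_for_boundaries} and the four pipe-configuration cases in its proof. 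None of this combinatorial content appears in your proposal, so as it stands the argument does not close.
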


For an example of the first set of moves, consider the Go-diagram
\begin{displaymath}
\begin{tikzpicture}
\begin{scope}[scale=.75]
\draw (-1,1.5) node {$D' = $};
\draw[step = 1] (0,0) grid (3,3);
%
\draw[thick] (.5,.2) -- (.5,.8);
\draw[thick] (.2,.5) -- (.8,.5);
\draw[thick] (1.5,.2) -- (1.5,.8);
\draw[thick] (1.2,.5) -- (1.8,.5);
\draw[fill = white] (2.5,.5) circle (.25);
%
\draw[thick] (.5,1.2) -- (.5,1.8);
\draw[thick] (.2,1.5) -- (.8,1.5);
\draw[thick] (1.5,1.2) -- (1.5,1.8);
\draw[thick] (1.2,1.5) -- (1.8,1.5);
\draw[fill = white] (2.5,1.5) circle (.25);
%
\draw[fill = black] (.5,2.5) circle (.25);
\draw[fill = black] (1.5,2.5) circle (.25);
\draw[thick] (2.5,2.2) -- (2.5,2.8);
\draw[thick] (2.2,2.5) -- (2.8,2.5);
\draw (3.2,.1) node {.};
\end{scope}
\end{tikzpicture}
\end{displaymath}
\noindent
The white stone at $(2,4)$ and the black stone at $(1,5)$ form a crossing/uncrossing pair. Replacing the stones in these squares with pluses yields the diagram (\ref{eqn:boundary_example_after_flip}) from the previous section. We saw that this diagram could be transformed into either of the Go-diagrams (\ref{eqn:boundary_example_after_correcting}) via corrective flips. So, $\calD'$ is a codimension one boundary of both of the two Deodhar components labelled by the Go-diagrams (\ref{eqn:boundary_example_after_correcting}).

For an example of the second set of moves, consider the Go-diagram
\begin{displaymath}
\begin{tikzpicture}
\begin{scope}[scale=.75]
\draw (-1,1) node {$D' = $};
\draw[step = 1] (0,0) grid (4,2);
%
\draw[thick] (.5,.2) -- (.5,.8);
\draw[thick] (.2,.5) -- (.8,.5);
\draw[fill = white] (1.5,.5) circle (.25);
\draw[thick] (2.5,.2) -- (2.5,.8);
\draw[thick] (2.2,.5) -- (2.8,.5);
\draw[thick] (3.5,.2) -- (3.5,.8);
\draw[thick] (3.2,.5) -- (3.8,.5);
%
\draw[thick] (.5,1.2) -- (.5,1.8);
\draw[thick] (.2,1.5) -- (.8,1.5);
\draw[fill = white] (1.5,1.5) circle (.25);
\draw[fill = white] (2.5,1.5) circle (.25);
\draw[thick] (3.5,1.2) -- (3.5,1.8);
\draw[thick] (3.2,1.5) -- (3.8,1.5);
\draw (4.2,.1) node {.};
\end{scope}
\end{tikzpicture}
\end{displaymath}
\noindent
Replacing the white stone at $(1,5)$ with a plus decreases the length of the permutation by exactly one, so this replacement is valid. After, performing this replacement we obtain the diagram
\begin{displaymath}
\begin{tikzpicture}
\begin{scope}[scale=.75]
\draw[step = 1] (0,0) grid (4,2);
%
\draw[thick] (.5,.2) -- (.5,.8);
\draw[thick] (.2,.5) -- (.8,.5);
\draw[fill = white] (1.5,.5) circle (.25);
\draw[thick] (2.5,.2) -- (2.5,.8);
\draw[thick] (2.2,.5) -- (2.8,.5);
\draw[thick] (3.5,.2) -- (3.5,.8);
\draw[thick] (3.2,.5) -- (3.8,.5);
%
\draw[thick] (.5,1.2) -- (.5,1.8);
\draw[thick] (.2,1.5) -- (.8,1.5);
\draw[thick] (1.5,1.2) -- (1.5,1.8);
\draw[thick] (1.2,1.5) -- (1.8,1.5);
\draw[fill = white] (2.5,1.5) circle (.25);
\draw[thick] (3.5,1.2) -- (3.5,1.8);
\draw[thick] (3.2,1.5) -- (3.8,1.5);
\draw (4.2,.1) node {,};
\end{scope}
\end{tikzpicture}
\end{displaymath}
\noindent
which is not a Go-diagram. Performing corrective flips, which in this case are simply \Le-moves, we arrive at the diagram
\begin{displaymath}
\begin{tikzpicture}
\begin{scope}[scale=.75]
\draw (-1,1) node {$D = $};
\draw[step = 1] (0,0) grid (4,2);
%
\draw[thick] (.5,.2) -- (.5,.8);
\draw[thick] (.2,.5) -- (.8,.5);
\draw[thick] (1.5,.2) -- (1.5,.8);
\draw[thick] (1.2,.5) -- (1.8,.5);
\draw[thick] (2.5,.2) -- (2.5,.8);
\draw[thick] (2.2,.5) -- (2.8,.5);
\draw[thick] (3.5,.2) -- (3.5,.8);
\draw[thick] (3.2,.5) -- (3.8,.5);
%
\draw[fill = white] (.5,1.5) circle (.25);
\draw[thick] (1.5,1.2) -- (1.5,1.8);
\draw[thick] (1.2,1.5) -- (1.8,1.5);
\draw[fill = white] (2.5,1.5) circle (.25);
\draw[thick] (3.5,1.2) -- (3.5,1.8);
\draw[thick] (3.2,1.5) -- (3.8,1.5);
\draw (4.2,.1) node {.};
\end{scope}
\end{tikzpicture}
\end{displaymath}
\noindent
So, the Deodhar component $\calD'$ is a codimension one boundary of $\calD$. The diagrams $D$ and $D'$ are also \Le-diagrams and thus index positroid cells $\calP$ and $\calP'$. One also has that $\calP'$ is a codimension one boundary of $\calP$. To prove Theorem \ref{thm:diagrammatic_boundaries}, we will need a technical lemma.

\begin{lemma} \label{lem:technical_for_boundaries}
Let $D$ and $D'$ be $\bullet/\circ/+$-diagrams and suppose $D$ is obtained from $D'$ by performing a corrective flip. If $b$ contains a white stone in $D$, but not in $D'$, then $I_b^{D} > I_b^{D'}$. Moreover, let $J_b^{D'}$ be the set (\ref{eqn:J_b}) from Proposition \ref{prop:large_sets_vanish}. In the case above, $I_b^{D} > J_b^{D'}$.
\end{lemma}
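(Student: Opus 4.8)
The plan is to reduce the statement to the single box that actually changes from a plus to a white stone. By Proposition~\ref{prop:corrective_flip_observations}(iv), a corrective flip alters the filling of exactly two boxes---one plus becomes a stone and one stone becomes a plus---and otherwise only recolours stones along two pipe segments; recolouring changes neither which boxes carry stones, nor the permutations $u_{c^{in}}$ and $u_c$, nor the sets $I_c$. Hence the hypothesis that $b$ carries a white stone in $D$ but not in $D'$ can hold only when $b$ is the box, call it $d$, whose plus in $D'$ becomes a stone, and I would fix $b=d$ from here on. (The same argument works regardless of which of the two moves in Definition~\ref{def:corrective_flip} is used, since everything below refers only to $d$.) Since $d$ holds a plus in $D'$ we have $u_d^{D'}=\varepsilon$, so
\[
I_d^{D'}=u_{d^{in}}^{D'}\,u_d^{D'}\,(v_{d^{in}})^{-1}v\{n,\dots,n-k+1\}=u_{d^{in}}^{D'}(v_{d^{in}})^{-1}v\{n,\dots,n-k+1\}=J_d^{D'},
\]
so the ``moreover'' clause follows automatically once $I_d^{D}>I_d^{D'}$ is established.

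Next I would compute both sets. Since $d\in d^{in}$, the equality case in the proof of Lemma~\ref{lem:corrective_flips_decrease_length} (applied to the corrective flip from $D'$ to $D$) gives $u_{d^{in}}^{D}=u_{d^{in}}^{D'}$. Put $w:=u_{d^{in}}^{D'}$, $Y:=(v_{d^{in}})^{-1}v\{n,\dots,n-k+1\}$, and $s_\alpha:=s_d$. Using $u_d^{D}=s_d$ (as $d$ carries a stone in $D$), the formula in Definition~\ref{def:set_I_b} becomes
\[
I_d^{D'}=wY,\qquad I_d^{D}=w\,s_\alpha\,Y,
\]
so $I_d^{D}$ is obtained from $I_d^{D'}$ by applying the transposition $w s_\alpha w^{-1}$. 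Two facts then finish the proof. First, because the plus at $d$ violates the distinguished property in $D'$, Definition~\ref{def:go-diagram} gives $\ell(w s_\alpha)=\ell(u_{d^{in}}^{D'}u_d^{D'}s_d)<\ell(u_{d^{in}}^{D'}u_d^{D'})=\ell(w)$, i.e.\ $w(\alpha)>w(\alpha+1)$. Second---the combinatorial heart of the lemma---one needs $\alpha\notin Y$ and $\alpha+1\in Y$. Granting this, $s_\alpha Y=(Y\setminus\{\alpha+1\})\cup\{\alpha\}$, hence $I_d^{D}=(I_d^{D'}\setminus\{w(\alpha+1)\})\cup\{w(\alpha)\}$; since $w$ is injective we have $w(\alpha+1)\in I_d^{D'}$ while $w(\alpha)\notin I_d^{D'}$, and as $w(\alpha)>w(\alpha+1)$ this replaces one element of $I_d^{D'}$ by a strictly larger element not already present, which strictly increases the set in the partial order on $\binom{[n]}{k}$. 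Thus $I_d^{D}>I_d^{D'}=J_d^{D'}$.

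It remains to prove $\alpha\notin Y$ and $\alpha+1\in Y$. I would first choose a reading order of $\lambda$ that reads all boxes of $d^{in}$ before any box of $d^{out}$; this is legitimate because no box of $d^{out}$ lies in $c^{in}$ for any $c\in d^{in}$. Then $v=v_{d^{in}}\cdot v_{d^{out}}$, where $v_{d^{out}}$ is the product of the transpositions labelling the boxes of the Ferrers subshape $d^{out}$, so $Y=v_{d^{out}}\{n,\dots,n-k+1\}$. Equivalently---and this matches the diagrammatic description of $I_b$ in Definition~\ref{def:set_I_b}---$Y$ is the set of labels on the left boundary of the pipe dream that is all crossings on $d^{out}$ and all elbows on $d^{in}$. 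In that pipe dream I would trace the two pipes meeting at the elbow in box $d$: one enters from below and leaves to the left, the other enters from the right and leaves to the top. Following them through the all-elbow region $d^{in}$ and then the all-crossing region $d^{out}$---using that $s_\alpha=s_d$ is by definition the permutation of the $\circ/+$-diagram that is a white stone at $d$ and pluses elsewhere, which pins down how the labels $\alpha$ and $\alpha+1$ enter near $d$---one finds that the pipe leaving $d$ to the left carries $\alpha+1$ and the pipe leaving $d$ upward carries $\alpha$; since only the former reaches the left boundary of $\lambda$, this gives $\alpha+1\in Y$ and $\alpha\notin Y$. This identification can also be read off from Proposition~\ref{prop:I_b_again}, or verified using that $I_b$ and $I_c$ differ by exactly one element when $b,c$ are adjacent. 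I expect this last step---determining which of the two pipes through $d$ survives to the left boundary, and which of $\alpha,\alpha+1$ it carries---to be the main obstacle; the rest is routine manipulation of the formula for $I_b$ together with Lemma~\ref{lem:corrective_flips_decrease_length}.
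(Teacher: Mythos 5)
There is a genuine gap, and it is located in the very first reduction. You claim that since recolouring ``changes neither which boxes carry stones'' the hypothesis ``$b$ contains a white stone in $D$ but not in $D'$'' can only be satisfied by the single box whose plus becomes a stone. But the hypothesis is about \emph{white} stones, not about stones: a box that held a \emph{black} stone in $D'$ and is recoloured to a \emph{white} stone in $D$ also satisfies it, and by Proposition \ref{prop:corrective_flip_observations}(iv) such recolourings do occur along the two pipes between the flipped tiles (e.g.\ a box where pipe $i$ enters from the bottom and pipe $k$ exits to the right with $i<k<j$ is an uncrossing in $D'$ but becomes a crossing once $i$ is replaced by $j$). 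These boxes are exactly the ones that matter in the application of the lemma inside Theorem \ref{thm:diagrammatic_boundaries}, since each new white stone of $D$ imposes a vanishing condition $\Delta_{I_b^{D}}=0$ that must be checked on $\calD'$ via Proposition \ref{prop:large_sets_vanish}, which requires the inequality $I_b^{D}>J_b^{D'}$ precisely for boxes that were black in $D'$. The paper's proof spends most of its length on a four-case analysis of the pipe configurations at such a box; your proposal omits this case entirely, and moreover asserts (falsely) that it cannot arise.

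The part you do treat --- the box whose plus becomes a stone --- is essentially the paper's first case, done algebraically rather than by reading the pipe dream: your identity $I_d^{D'}=J_d^{D'}$ from $u_d^{D'}=\varepsilon$, and the replacement of $w(\alpha+1)$ by the larger $w(\alpha)$, match the paper's statement $I_b^{D'}=I_b^{D}\setminus j\cup i$ with $J_b^{D'}=I_b^{D'}$. One further caution: you deduce $\ell(ws_\alpha)<\ell(w)$ from ``the plus at $d$ violates the distinguished property in $D'$,'' but the lemma as stated only assumes $D$ is obtained from $D'$ by a corrective flip; in the flip variant that exchanges the plus with the uncrossing partner (rather than with the violating white stone itself), the box receiving the stone need not be the violating plus, so this step needs to be argued for both variants of Definition \ref{def:corrective_flip} rather than asserted. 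To repair the proposal you would need to add the analysis of boxes holding black stones in $D'$, distinguishing the possible positions of the third pipe $k$ relative to $i$ and $j$ as the paper does, and verify $I_b^{D}>J_b^{D'}$ there as well (where now $J_b^{D'}\neq I_b^{D'}$).
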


\begin{proof}
Let $D$ be obtained from $D'$ by performing a corrective flip involving the pipes $i$ and $j$ with $i < j$. Let $b$ contain a white stone in $D$ but not in $D'$. From point (iv) in Proposition \ref{prop:corrective_flip_observations}, $b$ must be along either the pipe $i$ or $j$ between the two squares involved in the corrective flip. If $b$ contains a plus in $D'$, it must be the plus that was involved in the corrective flip. Then, $I_b^{D'} = I_b^{D} \setminus j \cup i$, so $I_b^{D} > I_b^{D'}$. In this case, $J_b^{D'} = I_b^{D'}$. So, $I_b^{D} > J_b^{D'}$ as well.

Suppose $b$ contains a black stone in $D'$. Then, the pipes coming into $b$ in pipe dream associated to $D'$ look like one of the four following cases.
\begin{displaymath}
\begin{tikzpicture}
\begin{scope}[scale=.75]
\draw[step=1] (0,0) grid (1,1);
\draw[fill = black] (.5,.5) circle (.25);
\draw[very thick] (.5,-.4) -- (.5,-.05);
\draw (.5,-.7) node {$i$};
\draw[very thick] (1.05,.5) -- (1.4,.5);
\draw (1.7,.5) node {$k$};
\draw (-1,.5) node {(i)};

\begin{scope}[xshift = 5cm]
\draw[step=1] (0,0) grid (1,1);
\draw[fill = black] (.5,.5) circle (.25);
\draw[very thick] (.5,-.4) -- (.5,-.05);
\draw (.5,-.7) node {$k$};
\draw[very thick] (1.05,.5) -- (1.4,.5);
\draw (1.7,.5) node {$i$};
\draw (-1,.5) node {(ii)};
\end{scope}

\begin{scope}[xshift = 10cm]
\draw[step=1] (0,0) grid (1,1);
\draw[fill = black] (.5,.5) circle (.25);
\draw[very thick] (.5,-.4) -- (.5,-.05);
\draw (.5,-.7) node {$j$};
\draw[very thick] (1.05,.5) -- (1.4,.5);
\draw (1.7,.5) node {$k$};
\draw (-1,.5) node {(iii)};
\end{scope}

\begin{scope}[xshift = 15cm]
\draw[step=1] (0,0) grid (1,1);
\draw[fill = black] (.5,.5) circle (.25);
\draw[very thick] (.5,-.4) -- (.5,-.05);
\draw (.5,-.7) node {$k$};
\draw[very thick] (1.05,.5) -- (1.4,.5);
\draw (1.7,.5) node {$j$};
\draw (-1,.5) node {(iv)};
\end{scope}
\end{scope}
\end{tikzpicture}
\end{displaymath}

In case (ii), $k < i$. So, $k < j$ and $b$ would still be filled with a black stone when the corrective flip switches the pipes $i$ and $j$. Similarly, in case (iii), $j < k$ and $b$ would still be filled with a black stone in $D$.

In case (i), $b$ contains a white stone in $D$ if and only if $i < k < j$. The set $I_b^{D'}$ contains $i$ and $I_b^{D}$ contains $j$, since the square $b$ is temporarily filled with an elbow piece when determining $I_b$. So, $I_b^{D'} = I_b^{D} \setminus j \cup i$, and $I_b^{D} > I_b^{D'}$. In this case, $J_b^{D'} = I_b^{D} \setminus j \cup k$, and since $k < j$, $I_b^{D} > J_b^{D'}$.

Finally, in case (iv) note that since $b$ is along the segment of pipe between the two boxes in the corrective flip, the pipe $i$ appears somewhere below the box $b$ in its column. If $k = i$, then $b$ must be the uncrossing tile paired with the crossing which caused the corrective flip. In this case, $b$ will not be filled with a white stone after performing the corrective flip to reach $D$. Since the pipe $i$ appears somewhere below $b$ in its column and is not involved in the crossing at $b$, $i \in I_b^{D'}$. Since $b$ is temporarily filled with an elbow piece when determining $I_b^{D'}$, $j \notin I_b^{D'}$. So, again $I_b^{D'} = I_b^{D} \setminus j \cup i$, and $I_b^{D} > I_b^{D'}$. In this case, $J_b^{D'} = I_b^{D} \setminus j \cup k$, and since $i < k < j$, $I_b^{D} > J_b^{D'}$.
\end{proof}

\begin{proof}[Proof of Theorem \ref{thm:diagrammatic_boundaries}]
Let $D$ and $D'$ be Go-diagrams of shape $\lambda$ and suppose that $D$ is obtained from $D'$ by either of the two procedures in the theorem statement. Let $\calD$ and $\calD'$ be the Deodhard components indexed by $D$ and $D'$. Evidently,
\begin{displaymath}
\dim(\calD) = \dim(\calD') + 1,
\end{displaymath}
\noindent
so we need only check that $\overline{\calD'} \subset \overline{\calD}$. The ideal of the variety $\overline{\calD}$ is generated by $\Delta_b$, where $b$ contains a white stone in $D$, in addition to the defining relations for the Schubert variety $\calS_{v}$. Since the ideal of $\overline{\calD'}$ already contains the defining relations of $\calS_{v}$, it suffices to verify that $\Delta_b$ vanishes uniformly on $\calD'$ whenever $b$ contains a white stone in $D$.

Let $b = (i,j)$ contain a white stone in $D$. Let $D''$ be the intermediate $\bullet/\circ/+$-diagram obtained by undoing the crossing/uncrossing pair in $D'$, or by changing the appropriate white stone to a plus in $D'$, depending on how $D$ was obtained from $D'$, but without performing any corrective flips. Then,
\begin{displaymath}
u_{b^{in}}^{D'} \geq u_{b^{in}}^{D''}
\end{displaymath}
\noindent
for all $b \in \lambda$. This inequality is strict if $b^{in}$ contains the undone crossing tile but not the undone uncrossing (if it exists), and is an equality otherwise. Performing corrective flips to obtain $D$ from $D''$, Lemma \ref{lem:corrective_flips_decrease_length} implies $u_{b^{in}}^{D''} \geq u_{b^{in}}^{D}$. So, $u_{b^{in}}^{D'} \geq u_{b^{in}}^{D}$. Now, $I_b^{D}$ is the image of
\begin{displaymath}
u_{b^{in}}^{D} (u_b^D) (v^{D}_{b^{in}})^{-1} v^{D} (n,n-1, \dots, 2, 1)
\end{displaymath}
\noindent
in the quotient $\mathfrak{S}_n / (\mathfrak{S}_k \times \mathfrak{S}_{n-k})$. Since $D$ and $D'$ have the same Ferrers shape, $v^D = v^{D'}$ and $v^{D}_{b^{in}} = v^{D'}_{b^{in}}$. So, $I_b^{D'} \leq I_b^{D}$.

Suppose $b$ contains a white stone in $D'$. If $I_b^{D'} = I_b^{D}$, $\Delta_{I_b^{D}}$ clearly vanishes uniformly on $\calD$. Suppose that $I_b^{D'} < I_b^{D}$. Let $J_b^{D'}$ be the set (\ref{eqn:J_b}) from Proposition \ref{prop:large_sets_vanish}. Since $b$ contains a white stone, $J_b^{D'} < I_b^{D'}$. Then, Proposition \ref{prop:large_sets_vanish} implies that $\Delta_{I_b^{D}}$ vanishes uniformly on $\calD'$.

Suppose that $b$ contains a plus in $D'$. Then, $b$ must have either changed to a white stone when performing some corrective flip, or when replacing stones with pluses to obtain $D''$. If $b$ turned into a white stone when performing a corrective flip, Lemma \ref{lem:technical_for_boundaries} implies that $I_b^{D} > I_b^{D'}$. If $b$ turned into a white stone when replacing stones with pluses, an argument identical to the proof of Lemma \ref{lem:technical_for_boundaries} shows that $I_b^{D} > I_b^{D'}$. In either case, since $b$ is filled with a plus, $J_b^{D'} = I_b^{D'}$. So, Proposition \ref{prop:large_sets_vanish} implies that $\Delta_{I_b^{D}}$ vanishes uniformly on $\calD'$.

Finally, suppose that $b$ contains a black stone in $D'$. Again, $b$ must have either changed to a white stone when performing some corrective flip, or when replacing stones with pluses to obtain $D''$. If $b$ changed to white stone during a corrective flip, this occurred in either case (i) or (iv) examined in the proof of Lemma \ref{lem:technical_for_boundaries}, so $J_b^{D'} < I_b^{D}$ and Proposition \ref{prop:large_sets_vanish} implies that $\Delta_{I_b^{D}}$ vanishes uniformly on $\calD'$. The argument in the case where $b$ changed from being a black stone to a white stone when replacing stones with pluses to pass from $D'$ to $D''$ is similar.
\end{proof}

We conjecture that Theorem \ref{thm:diagrammatic_boundaries} is sharp for describing when there is a containment of closures of Deodhar components within a Schubert cell.

\begin{conjecture} \label{conj:main}
Let $D$ and $D'$ be Go-diagrams with the same Ferrers shape indexing Deodhar components $\calD$ and $\calD'$. Suppose that $\overline{\calD'} \subset \overline{\calD}$ and that $\dim(\calD) = \dim(\calD') +1$. Then, $D$ is obtained from $D'$ by one of the two procedures described in Theorem \ref{thm:diagrammatic_boundaries}.
\end{conjecture}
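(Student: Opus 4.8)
The plan is to convert the hypothesis $\overline{\calD'}\subset\overline{\calD}$ into a combinatorial relation between $D$ and $D'$ and then read off the two procedures of Theorem~\ref{thm:diagrammatic_boundaries} from that relation. First we record what the hypotheses give. Since $D$ and $D'$ have the same shape $\lambda$, both Deodhar components lie in $\calS_v$, and, as in the proof of Theorem~\ref{thm:diagrammatic_boundaries}, $\overline{\calD}$ is cut out inside $\calS_v$ by the equations $\Delta_{I_b^D}=0$ over the boxes $b$ carrying white stones of $D$; hence $\overline{\calD'}\subset\overline{\calD}$ is equivalent to the vanishing of every such $\Delta_{I_b^D}$ on $\calD'$. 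Feeding this into Propositions~\ref{prop:I_b_again} and~\ref{prop:large_sets_vanish} should yield the inequality $I_b^{D'}\le I_b^D$ for \emph{every} box $b$: by Proposition~\ref{prop:I_b_again} the (shifted) set $I_b$ is the maximum of an explicit family of subsets on which the corresponding Pl\"ucker coordinate is not uniformly zero, and that family can only shrink when one passes from $\calD$ to the subvariety $\calD'$, so its maximum can only decrease. The dimension hypothesis, via Proposition~\ref{prop:length_from_filling} and Theorem~\ref{thm:parameterization_of_deodhar}, says exactly that $D'$ has one more white stone than $D$. The crux is then the converse combinatorial statement: if $D$ and $D'$ are Go-diagrams of the same shape with $I_b^{D'}\le I_b^D$ for all $b$ and with $D'$ carrying one extra white stone, then $D$ is obtained from $D'$ by one of the two procedures.

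To establish that statement we would locate, inside $D'$, the stone (or crossing/uncrossing pair) that must be opened up. A natural candidate is a $\preccurlyeq$-minimal box $b_0$ at which $I_{b_0}^{D'}<I_{b_0}^D$; near $b_0$ the pipe dream of $D'$ should fall into one of the configurations analysed in the four-case argument of Lemma~\ref{lem:technical_for_boundaries}, and from that analysis one expects to conclude that $b_0$ carries a stone which either has an uncrossing partner (giving a crossing/uncrossing pair) or satisfies the length condition of the second procedure. Replacing those one or two stones by pluses produces an intermediate diagram $D''$ with $u_{b^{in}}^{D''}\le u_{b^{in}}^{D'}$ everywhere, and Theorem~\ref{thm:corrective_flips_terminate} then yields a Go-diagram reachable from $D''$ by corrective flips. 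One wants this Go-diagram to be $D$ itself. Since a Go-diagram is determined by the positions of its white stones, and the sets $I_b$ together with the white/plus/black pattern can be recovered from the closure of the associated Deodhar component, it would suffice to show that \emph{some} corrective-flip sequence out of $D''$ reaches a diagram with the same $I_b$'s and the same colouring as $D$; for this we would track the tuple $\tau$ of Theorem~\ref{thm:corrective_flips_terminate} and argue that, as long as the current diagram differs from $D$, there is a corrective flip strictly decreasing $\tau$ toward $\tau(D)$, using $I_b^{D'}\le I_b^D$ to control the comparison.

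The step we expect to be the main obstacle is exactly this last reconstruction: the inequality $I_b^{D'}\le I_b^D$ is only a rather coarse shadow of the closure containment, and it is not clear a priori that it forces the passage from $D'$ to $D$ to be realised by a \emph{single} local move followed by corrective flips rather than by some global rearrangement. Two phenomena make this genuinely hard. First, corrective flips are non-deterministic (Section~\ref{sec:corrective_flips}), so there is no canonical target, and one must prove that some flip path actually hits $D$. Second, the Deodhar decomposition is not a stratification (\cite{dudas:not_a_stratification}), so $\calD'$ could in principle sit on $\partial\calD$ in a way not witnessed by any one-parameter degeneration of the Marsh--Rietsch parametrization; to exclude this one would go back to the parametrization itself, write $\overline{\calD}$ as the closure of the image of $\rr^{\#(\bullet)}\times(\rr\setminus\{0\})^{\#(+)}$, and classify the codimension-one strata obtained by letting a single coordinate tend to $0$ or $\pm\infty$, checking that a ``$+$'' coordinate going to $0$ reproduces precisely the two procedures (a plain degeneration of a plus to a white stone, possibly creating an uncrossing that is then absorbed into a crossing/uncrossing pair, followed by corrective flips) and that no other limit lowers the dimension by exactly one. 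Making this parametrization-level analysis agree with the diagrammatic bookkeeping, uniformly over all $D$, is where the real difficulty lies, which is why we state the result only as a conjecture.
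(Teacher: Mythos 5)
The statement you are addressing is Conjecture~\ref{conj:main}; the paper offers no proof of it, explicitly flags reasons for skepticism (the Deodhar decomposition is not a stratification, and closures of Deodhar components need not be unions of Deodhar components), and only supplies a single worked example plus the positroid analogue (Theorem~\ref{thm:positroid_boundaries}) as evidence. So there is no argument in the paper to compare yours against, and your submission is, by your own account, a plan rather than a proof. Your preliminary reductions are sound: since $\Delta_{I_b^D}$ vanishes on $\calD$ for every white-stone box $b$ of $D$, it vanishes on $\overline{\calD}\supset\overline{\calD'}\supset\calD'$, and combining this with Proposition~\ref{prop:I_b_again} (whose membership constraints depend only on the shape and the box, so the two maxima are comparable) does give $I_b^{D'}\le I_b^{D}$ for all $b$; likewise Proposition~\ref{prop:length_from_filling} and Theorem~\ref{thm:parameterization_of_deodhar} correctly convert the codimension-one hypothesis into ``$D'$ has exactly one more white stone than $D$.''

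The genuine gap is the one you name yourself: the converse combinatorial statement --- that $I_b^{D'}\le I_b^{D}$ for all $b$ together with the stone count forces $D$ to arise from $D'$ by one of the two procedures --- is not established, and nothing in the paper's toolkit (Lemma~\ref{lem:technical_for_boundaries}, Proposition~\ref{prop:only_corrective_flips_decrease_length}, Theorem~\ref{thm:corrective_flips_terminate}) is sharp enough to deliver it. Two specific failure modes deserve emphasis. First, you have not shown that this converse is even true as a statement about diagrams: the inequalities $I_b^{D'}\le I_b^{D}$ discard most of the information in $\overline{\calD'}\subset\overline{\calD}$, and a priori there could be pairs satisfying them that are not related by any single local move plus corrective flips; if so, your approach would need to extract strictly more from the closure containment before the diagrammatic analysis begins. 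Second, your proposed reconstruction via the tuple $\tau$ of Theorem~\ref{thm:corrective_flips_terminate} only shows that some flip sequence terminates in some Go-diagram below $D''$; steering it to $D$ specifically requires an argument that no alternative terminal Go-diagram is consistent with the $I_b$ data, which is precisely the nondeterminacy problem illustrated by diagrams (\ref{eqn:boundary_example_after_flip}) and (\ref{eqn:boundary_example_after_correcting}). Your honest conclusion --- that the parametrization-level degeneration analysis would have to be carried out and matched against the diagrammatics --- is the correct assessment of where the difficulty lies, but it means the statement remains, as in the paper, a conjecture.
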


There is reason to be skeptical of this conjecture. Deodhar components are in general poorly behaved. In particular, Proposition 2.5 in \cite{dudas:not_a_stratification} shows that the closure of a Deodhar component is not in general a union of Deodhar components by providing two Deodhar components $\calD$ and $\calD'$ in the same Schubert cell in the type $B$ full flag manifold such that $\overline{\calD} \cap \calD'$ is a nonempty proper subset of $\calD'$. Proposition 2.7 in \cite{dudas:not_a_stratification} disproves a conjecture for determining whether $\overline{\calD} \cap \calD' \neq \emptyset$. While Conjecture \ref{conj:main} addresses a question distinct from these two issues, the general wild behavior of the Deodhar decomposition could be reason for skepticism. As a reality check we give an example that Conjecture \ref{conj:main} accurately describes the boundary structure for Deodhar components within a positroid cell, and Theorem \ref{thm:positroid_boundaries} verifies Conjecture \ref{conj:main} accurately describes the boundary structure of positroid varieties within a Schubert cell.

\begin{example}
The following poset consists of Go-diagrams labelling Deodhar components with the positroid cell $\calP_{123456,456123} \subset Gr(3,6)$ ordered by containment of closures of Deodhar components.

\begin{displaymath}
\begin{tikzpicture}
\begin{scope}[scale=.5]
\begin{scope}[shift={(0,-3)}]
\draw[step = 1] (0,0) grid (3,3);
%
\draw[thick] (.5,.2) -- (.5,.8);
\draw[thick] (.2,.5) -- (.8,.5);
\draw[fill = white] (1.5,.5) circle (.25);
\draw[fill = white] (2.5,.5) circle (.25);
%
\draw[fill = black] (.5,1.5) circle (.25);
\draw[thick] (1.5,1.2) -- (1.5,1.8);
\draw[thick] (1.2,1.5) -- (1.8,1.5);
\draw[fill = white] (2.5,1.5) circle (.25);
%
\draw[fill = black] (.5,2.5) circle (.25);
\draw[fill = black] (1.5,2.5) circle (.25);
\draw[thick] (2.5,2.2) -- (2.5,2.8);
\draw[thick] (2.2,2.5) -- (2.8,2.5);

\begin{scope}[shift={(-4,5)}]
\draw[step = 1] (0,0) grid (3,3);
%
\draw[thick] (.5,.2) -- (.5,.8);
\draw[thick] (.2,.5) -- (.8,.5);
\draw[thick] (1.5,.2) -- (1.5,.8);
\draw[thick] (1.2,.5) -- (1.8,.5);
\draw[fill = white] (2.5,.5) circle (.25);
%
\draw[thick] (.5,1.2) -- (.5,1.8);
\draw[thick] (.2,1.5) -- (.8,1.5);
\draw[thick] (1.5,1.2) -- (1.5,1.8);
\draw[thick] (1.2,1.5) -- (1.8,1.5);
\draw[fill = white] (2.5,1.5) circle (.25);
%
\draw[fill = black] (.5,2.5) circle (.25);
\draw[fill = black] (1.5,2.5) circle (.25);
\draw[thick] (2.5,2.2) -- (2.5,2.8);
\draw[thick] (2.2,2.5) -- (2.8,2.5);
\end{scope}

\begin{scope}[shift={(0,5)}]
\draw[step = 1] (0,0) grid (3,3);
%
\draw[thick] (.5,.2) -- (.5,.8);
\draw[thick] (.2,.5) -- (.8,.5);
\draw[fill = white] (1.5,.5) circle (.25);
\draw[thick] (2.5,.2) -- (2.5,.8);
\draw[thick] (2.2,.5) -- (2.8,.5);
%
\draw[fill = black] (.5,1.5) circle (.25);
\draw[thick] (1.5,1.2) -- (1.5,1.8);
\draw[thick] (1.2,1.5) -- (1.8,1.5);
\draw[fill = white] (2.5,1.5) circle (.25);
%
\draw[thick] (.5,2.2) -- (.5,2.8);
\draw[thick] (.2,2.5) -- (.8,2.5);
\draw[fill = black] (1.5,2.5) circle (.25);
\draw[thick] (2.5,2.2) -- (2.5,2.8);
\draw[thick] (2.2,2.5) -- (2.8,2.5);
\end{scope}

\begin{scope}[shift={(4,5)}]
\draw[step = 1] (0,0) grid (3,3);
%
\draw[thick] (.5,.2) -- (.5,.8);
\draw[thick] (.2,.5) -- (.8,.5);
\draw[fill = white] (1.5,.5) circle (.25);
\draw[fill = white] (2.5,.5) circle (.25);
%
\draw[fill = black] (.5,1.5) circle (.25);
\draw[thick] (1.5,1.2) -- (1.5,1.8);
\draw[thick] (1.2,1.5) -- (1.8,1.5);
\draw[thick] (2.5,1.2) -- (2.5,1.8);
\draw[thick] (2.2,1.5) -- (2.8,1.5);
%
\draw[fill = black] (.5,2.5) circle (.25);
\draw[thick] (1.5,2.2) -- (1.5,2.8);
\draw[thick] (1.2,2.5) -- (1.8,2.5);
\draw[thick] (2.5,2.2) -- (2.5,2.8);
\draw[thick] (2.2,2.5) -- (2.8,2.5);
\end{scope}

\begin{scope}[shift={(-6,10)}]
\draw[step = 1] (0,0) grid (3,3);
%
\draw[thick] (.5,.2) -- (.5,.8);
\draw[thick] (.2,.5) -- (.8,.5);
\draw[thick] (1.5,.2) -- (1.5,.8);
\draw[thick] (1.2,.5) -- (1.8,.5);
\draw[thick] (2.5,.2) -- (2.5,.8);
\draw[thick] (2.2,.5) -- (2.8,.5);
%
\draw[thick] (.5,1.2) -- (.5,1.8);
\draw[thick] (.2,1.5) -- (.8,1.5);
\draw[thick] (1.5,1.2) -- (1.5,1.8);
\draw[thick] (1.2,1.5) -- (1.8,1.5);
\draw[fill = white] (2.5,1.5) circle (.25);
%
\draw[thick] (.5,2.2) -- (.5,2.8);
\draw[thick] (.2,2.5) -- (.8,2.5);
\draw[fill = black] (1.5,2.5) circle (.25);
\draw[thick] (2.5,2.2) -- (2.5,2.8);
\draw[thick] (2.2,2.5) -- (2.8,2.5);
\end{scope}

\begin{scope}[shift={(-2,10)}]
\draw[step = 1] (0,0) grid (3,3);
%
\draw[thick] (.5,.2) -- (.5,.8);
\draw[thick] (.2,.5) -- (.8,.5);
\draw[thick] (1.5,.2) -- (1.5,.8);
\draw[thick] (1.2,.5) -- (1.8,.5);
\draw[thick] (2.5,.2) -- (2.5,.8);
\draw[thick] (2.2,.5) -- (2.8,.5);
%
\draw[thick] (.5,1.2) -- (.5,1.8);
\draw[thick] (.2,1.5) -- (.8,1.5);
\draw[fill = white] (1.5,1.5) circle (.25);
\draw[thick] (2.5,1.2) -- (2.5,1.8);
\draw[thick] (2.2,1.5) -- (2.8,1.5);
%
\draw[fill = black] (.5,2.5) circle (.25);
\draw[thick] (1.5,2.2) -- (1.5,2.8);
\draw[thick] (1.2,2.5) -- (1.8,2.5);
\draw[thick] (2.5,2.2) -- (2.5,2.8);
\draw[thick] (2.2,2.5) -- (2.8,2.5);
\end{scope}

\begin{scope}[shift={(2,10)}]
\draw[step = 1] (0,0) grid (3,3);
%
\draw[thick] (.5,.2) -- (.5,.8);
\draw[thick] (.2,.5) -- (.8,.5);
\draw[thick] (1.5,.2) -- (1.5,.8);
\draw[thick] (1.2,.5) -- (1.8,.5);
\draw[fill = white] (2.5,.5) circle (.25);
%
\draw[thick] (.5,1.2) -- (.5,1.8);
\draw[thick] (.2,1.5) -- (.8,1.5);
\draw[fill = black] (1.5,1.5) circle (.25);
\draw[thick] (2.5,1.2) -- (2.5,1.8);
\draw[thick] (2.2,1.5) -- (2.8,1.5);
%
\draw[thick] (.5,2.2) -- (.5,2.8);
\draw[thick] (.2,2.5) -- (.8,2.5);
\draw[thick] (1.5,2.2) -- (1.5,2.8);
\draw[thick] (1.2,2.5) -- (1.8,2.5);
\draw[thick] (2.5,2.2) -- (2.5,2.8);
\draw[thick] (2.2,2.5) -- (2.8,2.5);
\end{scope}

\begin{scope}[shift={(6,10)}]
\draw[step = 1] (0,0) grid (3,3);
%
\draw[thick] (.5,.2) -- (.5,.8);
\draw[thick] (.2,.5) -- (.8,.5);
\draw[fill = white] (1.5,.5) circle (.25);
\draw[thick] (2.5,.2) -- (2.5,.8);
\draw[thick] (2.2,.5) -- (2.8,.5);
%
\draw[fill = black] (.5,1.5) circle (.25);
\draw[thick] (1.5,1.2) -- (1.5,1.8);
\draw[thick] (1.2,1.5) -- (1.8,1.5);
\draw[thick] (2.5,1.2) -- (2.5,1.8);
\draw[thick] (2.2,1.5) -- (2.8,1.5);
%
\draw[thick] (.5,2.2) -- (.5,2.8);
\draw[thick] (.2,2.5) -- (.8,2.5);
\draw[thick] (1.5,2.2) -- (1.5,2.8);
\draw[thick] (1.2,2.5) -- (1.8,2.5);
\draw[thick] (2.5,2.2) -- (2.5,2.8);
\draw[thick] (2.2,2.5) -- (2.8,2.5);
\end{scope}

\begin{scope}[shift={(0,15)}]
\draw[step = 1] (0,0) grid (3,3);
%
\draw[thick] (.5,.2) -- (.5,.8);
\draw[thick] (.2,.5) -- (.8,.5);
\draw[thick] (1.5,.2) -- (1.5,.8);
\draw[thick] (1.2,.5) -- (1.8,.5);
\draw[thick] (2.5,.2) -- (2.5,.8);
\draw[thick] (2.2,.5) -- (2.8,.5);
%
\draw[thick] (.5,1.2) -- (.5,1.8);
\draw[thick] (.2,1.5) -- (.8,1.5);
\draw[thick] (1.5,1.2) -- (1.5,1.8);
\draw[thick] (1.2,1.5) -- (1.8,1.5);
\draw[thick] (2.5,1.2) -- (2.5,1.8);
\draw[thick] (2.2,1.5) -- (2.8,1.5);
%
\draw[thick] (.5,2.2) -- (.5,2.8);
\draw[thick] (.2,2.5) -- (.8,2.5);
\draw[thick] (1.5,2.2) -- (1.5,2.8);
\draw[thick] (1.2,2.5) -- (1.8,2.5);
\draw[thick] (2.5,2.2) -- (2.5,2.8);
\draw[thick] (2.2,2.5) -- (2.8,2.5);
\end{scope}
\end{scope}

\begin{scope}[yscale=-1]
\draw[thick] (1.2,-.2) -- (-2.5,-1.8);
\draw[thick] (1.5,-.2) -- (1.5,-1.8);
\draw[thick] (1.8,-.2) -- (5.5,-1.8);
\draw[thick] (-2.8,-5.2) -- (-4.5,-6.8);
\draw[thick] (-2.5,-5.2) -- (-.7,-6.8);
\draw[thick] (-2.2,-5.2) -- (3.3,-6.8);
\draw[thick] (1.3,-5.2) -- (-4,-6.8);
\draw[thick] (1.7,-5.2) -- (7,-6.8);
\draw[thick] (5.2,-5.2) -- (-.3,-6.8);
\draw[thick] (5.5,-5.2) -- (3.7,-6.8);
\draw[thick] (5.8,-5.2) -- (7.5,-6.8);
\draw[thick] (-4.5,-10.2) -- (1.2,-11.8);
\draw[thick] (-.5,-10.2) -- (1.4,-11.8);
\draw[thick] (3.5,-10.2) -- (1.6,-11.8);
\draw[thick] (7.5,-10.2) -- (1.8,-11.8);
\end{scope}
\end{scope}
\end{tikzpicture}
\end{displaymath}
\end{example}

\begin{theorem} \label{thm:positroid_boundaries}
Let $D$ and $D'$ be \Le-diagrams in the same Ferrers shape indexing positroid cells $\calP_{u,v}$ and $\calP_{u',v}$. Then, $\calP_{u',v}$ is a codimension one boundary of $\calP_{u,v}$ if and only $D$ is obtained by $D'$ by:
\begin{itemize}
\item[(i)] Choosing a white stone in $D'$ such that replacing this stone with a plus decreases the length of the diagram's permutation by exactly one,
\item[(ii)] replacing this white stone with a plus, then
\item[(iii)] performing \Le-moves.
\end{itemize}
\end{theorem}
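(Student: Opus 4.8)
The plan is to reduce everything to the Bruhat order on permutation pairs. Write $v=v_\lambda$, so $D$ and $D'$ are \Le-diagrams of shape $\lambda$; let $u,u'$ be the permutations of their positive distinguished subexpressions in $\mathbf v$, so $D=D_u$ and $D'=D_{u'}$ with $u,u'\le v$. By Lemma \ref{lem:unique_positive_expression} the positroid cells contained in $\calS_v$ are indexed bijectively by the \Le-diagrams of shape $\lambda$, hence by $\{w\in\mathfrak S_n:w\le v\}$, and $\dim\calP_{w,v}=\ell(v)-\ell(w)$ (by Proposition \ref{prop:length_from_filling} and Theorem \ref{thm:parameterization_of_deodhar}, since a \Le-diagram has no black stones). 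The first ingredient I would establish is that the closure order on these positroid cells is the Bruhat order on the interval $[\varepsilon,v]$: the ``$\subseteq$'' containment follows from Theorem \ref{thm:diagrammatic_boundaries} once one knows $\overline{\calP_{w,v}}=\overline{\calD}$ for the Deodhar component $\calD$ of $D_w$ — which holds because $\calP_{w,v}\cap Gr_{\ge 0}=\calD\cap Gr_{\ge 0}$ by Theorem \ref{thm:positive_deodhar_components} and the positive part is Zariski dense in each closure (here $\calD$ is a product of copies of $\rr\setminus\{0\}$, having no black stones) — while the reverse containment is the known gradedness and Bruhat description of the cell poset of $Gr_{\ge0}(k,n)$ restricted to a Schubert cell, which I would cite from \cite{postnikov:total_positivity}. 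Granting this, ``$\calP_{u',v}$ is a codimension one boundary of $\calP_{u,v}$'' becomes equivalent to ``$u\lessdot u'$ in Bruhat order'', and it remains only to check that the stated procedure realizes exactly the Bruhat covering relations.

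For the ``if'' direction, suppose $D$ arises from $D'$ by (i)--(iii). Reading $D'$ in any reading order, its white stones spell the reduced positive distinguished word $\mathbf u'$ for $u'$, and changing the chosen white stone to a plus deletes one letter of $\mathbf u'$; condition (i) says the resulting word has length $\ell(u')-1$, so it is a reduced word for some $u''\lessdot u'$. The intermediate diagram is then a reduced $\circ/+$-diagram with permutation pair $(u'',v)$, and since \Le-moves preserve the permutation pair (Theorem \ref{thm:positroids_via_le_moves}(i)), the terminal diagram $D$ equals the unique \Le-diagram $D_{u''}$; thus $u=u''$, $u\lessdot u'$, and by the first paragraph $\calP_{u',v}$ is a codimension one boundary of $\calP_{u,v}$.

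For the ``only if'' direction, suppose $\calP_{u',v}$ is a codimension one boundary of $\calP_{u,v}$, so $u\lessdot u'$. By the strong exchange condition there is a single position of the reduced word $\mathbf u'$ whose deletion gives a reduced word for $u$; changing the corresponding white stone of $D'$ to a plus yields a reduced $\circ/+$-diagram $\tilde D$ with permutation pair $(u,v)$ (reading-order independent by Theorem \ref{thm:reading_order_doesnt_matter}) for which the replacement drops the permutation length by exactly one, so condition (i) is satisfied. Applying \Le-moves to $\tilde D$ terminates, by Theorem \ref{thm:positroids_via_le_moves}(iii), in the unique \Le-diagram with permutation pair $(u,v)$, namely $D_u=D$. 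Hence $D$ is obtained from $D'$ by the procedure, completing the equivalence.

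The step I expect to be the main obstacle is the geometric input quoted in the first paragraph — that inside a Schubert cell the positroid closure order is graded by dimension and coincides with Bruhat order on $[\varepsilon,v]$. The ``$\subseteq$'' half is cleanly supplied by Theorem \ref{thm:diagrammatic_boundaries} after the short density argument identifying $\overline{\calP}$ with $\overline{\calD}$ for a \Le-diagram; the reverse half, that no extra closure relations appear, is where I would rely on the established cell structure of the totally nonnegative Grassmannian rather than reprove it. Once that is in hand, both implications are routine reduced-word bookkeeping: ``change one white stone to a plus'' is ``delete one letter of the positive distinguished word,'' \Le-moves fix the permutation pair, and the strong exchange condition supplies the needed deletable letter.
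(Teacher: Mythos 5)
Your combinatorial bookkeeping is exactly the paper's: identify ``change one white stone of $D'$ to a plus'' with deleting one letter of the positive distinguished word $\mathbf{u'}$, use the exchange condition to see that Bruhat covers $u\lessdot u'$ are precisely what condition (i) produces, and use Theorem \ref{thm:positroids_via_le_moves} (\Le-moves preserve the pair; any maximal sequence terminates in the unique \Le-diagram for that pair) to conclude the diagram reached is $D$. Where you diverge is in how the geometric equivalence ``codimension-one boundary $\Leftrightarrow$ $u\lessdot u'$'' is obtained. The paper imports it wholesale from \cite{knutson:juggling} (Theorems 3.16, 5.9, 5.10): $\overline{\calP_{u',v}}\subset\overline{\calP_{u,v}}$ iff $[u',v]\subset[u,v]$, with codimension $\ell(u')-\ell(u)$. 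You instead split it, deriving the ``Bruhat implies closure'' half internally from Theorem \ref{thm:diagrammatic_boundaries} together with the identification $\overline{\calP_{w,v}}=\overline{\calD}$ for a \Le-diagram (your Zariski-density argument for that identification is fine for the Deodhar side, via the $(\rr\setminus\{0\})^m$ parameterization, but on the positroid side ``the positive part is Zariski dense in $\overline{\calP_{w,v}}$'' is itself a theorem of Knutson--Lam--Speyer, not something you can take for free), and outsourcing the ``closure implies Bruhat'' half to the cell poset of $Gr_{\geq 0}(k,n)$ cited to \cite{postnikov:total_positivity}. That last citation is the weak point: Postnikov's preprint does not establish the closure order of the totally nonnegative cells (that is Rietsch/Knutson--Lam--Speyer territory), and even granting it you would still need to pass from closures taken inside $Gr_{\geq 0}(k,n)$ to closures of the full cells $\calP_{u,v}$ in the Grassmannian, which again is precisely what the KLS results supply. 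So the fix is simply to cite \cite{knutson:juggling} as the paper does, at which point your re-derivation of the forward half via Theorem \ref{thm:diagrammatic_boundaries} becomes an optional (though pleasant) internal consistency check rather than a needed ingredient; with that substitution your argument is correct and essentially coincides with the paper's proof.
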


\begin{proof}
Let $D$ and $D'$ be \Le-diagrams in the same Ferrers shape indexing positroid cells $\calP_{u,v}$ and $\calP_{u',v}$. Combining Theorem 5.10, Theorem 5.9, and Theorem 3.16 from \cite{knutson:juggling}, $\overline{\calP_{u',v}} \subset \overline{\calP_{u,v}}$ if and only if there is a containment of Bruhat intervals $[u',v] \subset [u,v]$. In this case, the codimension of $\overline{\calP_{u',v}}$ in $\overline{\calP_{u,v}}$ is $\ell(u) - \ell(u')$. Let ${\mathbf{u}}$ be the expression obtained by omitting the identity terms in the positive distinguished expression for $u$ in ${\mathbf{v}}$. Then, $\calP_{u',v}$ is a codimension one boundary of $\calP_{u,v}$ if and only if $\ell(u) - \ell(u')$ and there is an subexpression ${\mathbf{u'}}$ of ${\mathbf{u}}$ obtained by omitting one transposition of ${\mathbf{u}}$. This subexpression ${\mathbf{u'}}$ is diagrammatically realized by replacing the white stone corresponding to the omitted transposition in the \Le-diagram associated to $u$ with a plus. Theorem \ref{thm:positroids_via_le_moves} implies that the resulting diagram can be transformed into the \Le-diagram indexing $\calP_{u',v}$ using \Le-moves.
\end{proof}

\section{Classification of Go-diagrams} \label{sec:classification}

In this section, we do not assume that stones in diagrams are colored black if and only if they are uncrossings.

The goal of this section is to give a means of verifying whether an arbitrary filling of a Ferrers shape with black stones, white stones, and pluses is a Go-diagram. For \Le-diagrams, there is a compact description of the class of \Le-diagrams as diagrams avoiding certain subdiagrams. Theorem \ref{thm:sad_injection} shows that no reasonable description of Go-diagrams in terms of forbidden subdiagrams can exist. In lieu of such a description, Theorem \ref{thm:characterization} gives an inductive characterization of the class of Go-diagrams.

We say that a rectangular diagram is a {\textit{minimal violation}} if the only square in the diagram which violates the distinguished property is in the top left corner and the pair of pipes which should uncross in this square initially cross in the bottom right corner. Restricted to $\circ/+$-diagrams, diagrams which are minimal violations are of the form
\begin{equation} \label{eqn:le-violation}
\begin{tikzpicture}
\begin{scope}[scale=.65]
\draw (0,0) -- (0,5) -- (6,5) -- (6,0) -- (0,0);
\draw (1,0) -- (1,5);
\draw (0,4) -- (6,4);
\draw (5,5) -- (5,0);
\draw (6,1) -- (0,1);
\draw (0,2) -- (1,2);
\draw (0,3) -- (1,3);
\draw (2,5) -- (2,4);
\draw (4,5) -- (4,4);
\draw (5,3) -- (6,3);
\draw (5,2) -- (6,2);
\draw (4,0) -- (4,1);
\draw (2,0) -- (2,1);

\draw[thick] (.5,.2) -- (.5,.8);
\draw[thick] (.2,.5) -- (.8,.5);
%
\begin{scope}[shift={(0,4)}]
\draw (0,0) -- (1,1);
\draw[fill = white] (.3,.7) circle (.18);
\draw[thick] (.7,.1) -- (.7,.5);
\draw[thick] (.5,.3) -- (.9,.3);
\end{scope}
\draw[thick] (5.5,4.2) -- (5.5,4.8);
\draw[thick] (5.2,4.5) -- (5.8,4.5);
%
\draw[fill = white] (5.5,.5) circle (.25);
%
\draw[fill = white] (.5,1.5) circle (.25);
\draw[fill = white] (.5,3.5) circle (.25);
\draw[fill = white] (1.5,.5) circle (.25);
\draw[fill = white] (1.5,4.5) circle (.25);
\draw[fill = white] (4.5,4.5) circle (.25);
\draw[fill = white] (4.5,.5) circle (.25);
\draw[fill = white] (5.5,1.5) circle (.25);
\draw[fill = white] (5.5,3.5) circle (.25);
%
\draw (3,.5) node {$\dots$};
\draw (3,4.5) node {$\dots$};
\draw (.5,2.35) node {$.$};
\draw (.5,2.5) node {$.$};
\draw (.5,2.65) node {$.$};
\draw (5.5,2.35) node {$.$};
\draw (5.5,2.5) node {$.$};
\draw (5.5,2.65) node {$.$};
\draw (3,2.5) node {white stones};
\draw (6.2,0) node {$.$};
\end{scope}
\end{tikzpicture}
\end{equation}
The set of Go-diagrams which are $\circ/+$-diagrams is exactly the set of \Le-diagrams. Any diagram which is not a \Le-diagram contains a minimal violation as a subdiagram. The following theorem shows that the set of minimal violations for $\bullet/\circ/+$-diagrams is much more poorly behaved by providing an injection from the set of Go-diagrams into the set of minimal violations. Since every minimal violation must appear on any list of forbidden subdiagrams for the class of Go-diagrams, this shows that Go-diagrams do not admit a reasonable description in terms of forbidden subdiagrams. This provides a negative answer to Problem 4.9 in \cite{kodama:deodhar_decomposition}.

\begin{theorem} \label{thm:sad_injection}
There is an injection from the set of valid Go-diagrams into the set of minimal violations for the class of Go-diagrams.
\end{theorem}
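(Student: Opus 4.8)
The plan is to build an explicit injection sending a valid Go-diagram $D$ to a minimal violation $\Phi(D)$, and the natural candidate is to take $D$ and attach to it a small ``gadget'' that forces exactly one distinguished-property violation in the top-left corner whose offending crossing occurs in the bottom-right corner. First I would fix a rectangular ambient shape: embed $D$ (of shape $\lambda$ sitting in a $(n-k)\times k$ box) into a strictly larger rectangle, reserving a new top row and a new left column. In the reserved cells I would place a prescribed pattern of stones and pluses — essentially a single white stone in the new top-left box together with pluses and stones running along the new row and column — chosen so that, reading in a convenient reading order, the subword contributed by $D$ is unchanged but the two pipes passing through the new top-left box cross somewhere inside (forced all the way to the bottom-right box of the enlarged rectangle by the gadget's structure), and are uncrossed only at the top-left box. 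The content of $D$ itself, being a Go-diagram, contributes no new violation, so the only violation of the enlarged diagram is the engineered one in the top-left corner, and its pipes initially cross in the bottom-right corner: this is exactly the definition of a minimal violation.

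The key steps, in order, are: (1) describe the enlargement $D \mapsto \Phi(D)$ precisely, specifying the filling of the reserved row and column and verifying the resulting shape is rectangular; (2) compute the pipe dream of $\Phi(D)$, tracking the two distinguished pipes through the gadget and showing they cross exactly once (in the SE corner) and that no other crossing/uncrossing configuration creates a violation — here I would invoke Theorem~\ref{thm:reading_order_doesnt_matter} so that I may choose whatever reading order makes the bookkeeping cleanest, e.g.\ reading the gadget last; (3) check that every box of $\Phi(D)$ other than the NW corner satisfies the distinguished property — for the boxes inside the copy of $D$ this is because $D$ is a Go-diagram and the gadget only appends letters that do not disturb the relevant lengths $\ell(u_{b^{in}}u_b s_b)$, and for the gadget boxes themselves this is a direct small computation; (4) verify injectivity by exhibiting the inverse: from a minimal violation in the image, delete the reserved top row and left column (which are recognizable because the gadget pattern is rigid) and recover $D$, noting that distinct $D$ give distinct fillings of the non-reserved part.

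The main obstacle I expect is step (2)–(3): arranging the gadget so that inserting $D$ never accidentally ``fixes'' the intended violation (e.g.\ the two special pipes getting uncrossed prematurely by a stone inside $D$) and never creates a *second* violation elsewhere. This is delicate because the behavior of the pipes through the new row/column depends on the permutation $u$ coming from $D$, not just on $\lambda$; the fix is to route the two special pipes so that they travel along the extreme boundary of the enlarged rectangle — using the new leftmost column and topmost row — where they are insulated from the interior of $D$, so their only interaction is the forced SE-corner crossing and the NW-corner uncrossing. I would also need to double-check that ``relabel stones black iff uncrossing'' does not matter here since in this section stones may be colored arbitrarily and only the distinguished property (not the Go-condition's black/white assignment) is at issue in the definition of minimal violation; concretely the injection targets the set of diagrams whose unique violating box is the NW corner with SE-corner crossing, and the gadget delivers exactly that.
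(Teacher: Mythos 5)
Your overall strategy---wrap $D$ in a rigid gadget that manufactures a single violation in the NW corner paired with a crossing in the SE corner, then invert by stripping the gadget---is the same one the paper uses. But as written the proposal has a genuine gap at the injectivity step. A minimal violation is by definition a \emph{rectangular} diagram, so before attaching any border you must pad a Go-diagram of non-rectangular shape $\lambda$ out to a rectangle (the paper pads with pluses). Once you do this, your claim that ``distinct $D$ give distinct fillings of the non-reserved part'' fails: for example, the all-plus Go-diagrams of shapes $(2,1)$ and $(2,2)$ both pad to the identical all-plus $2\times 2$ rectangle, so the shape $\lambda$---and hence $D$ itself---cannot be recovered from the padded filling. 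The paper's proof spends most of its effort on exactly this point: it inserts a second figure $\Lambda$, a reflection of the shape $\lambda$ assembled from rigid $2\times 2$ blocks (\ref{eqn:2_by_2_widget}), into the SE region of the rectangle before adding the border. From the image one first strips the border, then reads $\lambda$ off of $\Lambda$, and only then knows which boxes of the NW region constitute $D$ as opposed to padding. Without some device of this kind your map is not injective.

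A secondary issue: your gadget consists only of a new top row and left column, but to force the two special pipes to \emph{cross in the SE corner of the rectangle} you must also control the bottom and right boundary. The paper's border runs around all four sides, with a white stone (crossing) in the SE corner and white stones along the bottom and right edges carrying the two special pipes out to the extreme boundary, plus pluses in the other three corners; the stones along the top and left are then recolored black wherever needed so that the NW corner is the \emph{only} box violating the distinguished property. This part of your plan is repairable in the way you anticipate, but the shape-recovery problem above is the essential missing idea.
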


\begin{proof}
Let $D$ be a Go-diagram of shape $\lambda = (\lambda_1, \lambda_2, \dots, \lambda_{\ell})$. Let $D'$ be the diagram of inside a $\lambda_{\ell} \times \ell$ rectangle obtained by placing $D$ in the top left corner, then padding out the bottom right corner with pluses. Note that $D'$ is a Go-diagram.

Consider the the $2 \times 2$ diagram
\begin{equation} \label{eqn:2_by_2_widget}
\begin{tikzpicture}
\begin{scope}[scale = .75]
\draw[step = 1] (0,0) grid (2,2);
%
\draw[thick] (.5,.2) -- (0.5,.8);
\draw[thick] (.2,.5) -- (.8,.5);
\draw[fill = white] (1.5,.5) circle (.25);
%
\draw[thick] (1.5,1.2) -- (1.5,1.8);
\draw[thick] (1.2,1.5) -- (1.8,1.5);
\draw[fill = black] (.5,1.5) circle (.25);
\draw (2.2,0) node {$.$};
\end{scope}
\end{tikzpicture}
\end{equation}
\noindent
We build the reflection of the shape $\lambda$ over the line $y = x$ using these $2 \times 2$ blocks. Call this figure $\Lambda$. Now build a rectangular diagram $D''$ which contains $D'$ in the top left corner, $\Lambda$ in the bottom right corner, and pluses padding out the rest of the squares. The dimensions of $D''$ do not matter as long as there is enough room that no square in $D'$ is adjacent to a square in $\Lambda$. Observe that $D''$ is a valid Go-diagram.

Finally, build a one box wide border around $D''$ which has:
\begin{itemize}
\item pluses in the top left, top right, and bottom left corners,
\item a white stone in the bottom right corner,
\item white stones along the bottom and right sides,
\item white or black stones along the top and left sides, as is necessary to avoid a violation of the distinguished property.
\end{itemize}
\noindent
In this diagram, the top left square should be an uncrossing with the bottom right square and hence violates the distinguished property. As no other square in the diagram violates the distinguished property, this diagram is a minimal violation.

Given a diagram of this form, one can recover the diagram $D$ it came from. To do so, first delete a one square wide strip of boxes from the boundary of the diagram. Then, examine the bottom right portion of this diagram to find a Ferrers shape built out of copies of the $2 \times 2$ subdiagram (\ref{eqn:2_by_2_widget}). The boxes of this same Ferrers shape in the top left corner are the diagram $D$. Since this map is reversible, it is an injection from the set of Go-diagrams to the set of minimal violations for the class of Go-diagrams.
\end{proof}

\begin{example}
Consider the Go-diagram
\begin{displaymath}
\begin{tikzpicture}
\begin{scope}[scale = .65]
\draw[step = 1] (0,0) grid (3,2);
\draw[step = 1] (0,-1) grid (1,0);
%
%
\draw[thick] (.5,-.2) -- (0.5,-.8);
\draw[thick] (.2,-.5) -- (.8,-.5);
\draw[thick] (.5,.2) -- (0.5,.8);
\draw[thick] (.2,.5) -- (.8,.5);
\draw[fill = white] (1.5,.5) circle (.25);
\draw[fill = white] (2.5,.5) circle (.25);
%
\draw[fill = black] (.5,1.5) circle (.25);
\draw[fill = white] (1.5,1.5) circle (.25);
\draw[thick] (2.5,1.2) -- (2.5,1.8);
\draw[thick] (2.2,1.5) -- (2.8,1.5);
\draw (3.2,-1) node {$.$};
\end{scope}
\end{tikzpicture}
\end{displaymath}
\noindent
The image of this Go-diagram under the injection described in Theorem \ref{thm:sad_injection} is
\begin{displaymath}
\begin{tikzpicture}
\begin{scope}[scale = .65]
\draw[step = 1] (0,0) grid (10,9);
\begin{scope}[shift={(1,6)}]
\draw[line width = 4] (0,-1) -- (0,2) -- (3,2) -- (3,0) -- (1,0) -- (1,-1) -- (0,-1) -- (0,2);
\draw[step = 1] (0,0) grid (3,2);
\draw[step = 1] (0,-1) grid (1,0);
%
%
\draw[thick] (.5,-.2) -- (0.5,-.8);
\draw[thick] (.2,-.5) -- (.8,-.5);
\draw[thick] (.5,.2) -- (0.5,.8);
\draw[thick] (.2,.5) -- (.8,.5);
\draw[fill = white] (1.5,.5) circle (.25);
\draw[fill = white] (2.5,.5) circle (.25);
%
\draw[fill = black] (.5,1.5) circle (.25);
\draw[fill = white] (1.5,1.5) circle (.25);
\draw[thick] (2.5,1.2) -- (2.5,1.8);
\draw[thick] (2.2,1.5) -- (2.8,1.5);
\end{scope}
\begin{scope}[shift={(3,1)}]
\draw[line width = 4, step = 2] (0,0) grid (6,2);
\draw[line width = 4, step = 2] (2,0) grid (6,6);
\draw[line width = 4] (1,0) -- (0,0) -- (0,2) -- (1,2);
\draw[line width = 4] (2,5) -- (2,6) -- (6,6) -- (6,0) -- (5,0);
\draw[step = 1] (0,0) grid (2,2);
\draw[thick] (.5,.2) -- (0.5,.8);
\draw[thick] (.2,.5) -- (.8,.5);
\draw[fill = white] (1.5,.5) circle (.25);
%
\draw[thick] (1.5,1.2) -- (1.5,1.8);
\draw[thick] (1.2,1.5) -- (1.8,1.5);
\draw[fill = black] (.5,1.5) circle (.25);
\begin{scope}[shift={(2,0)}]
\draw[step = 1] (0,0) grid (2,2);
\draw[thick] (.5,.2) -- (0.5,.8);
\draw[thick] (.2,.5) -- (.8,.5);
\draw[fill = white] (1.5,.5) circle (.25);
%
\draw[thick] (1.5,1.2) -- (1.5,1.8);
\draw[thick] (1.2,1.5) -- (1.8,1.5);
\draw[fill = black] (.5,1.5) circle (.25);
\end{scope}
\begin{scope}[shift={(4,0)}]
\draw[step = 1] (0,0) grid (2,2);
\draw[thick] (.5,.2) -- (0.5,.8);
\draw[thick] (.2,.5) -- (.8,.5);
\draw[fill = white] (1.5,.5) circle (.25);
%
\draw[thick] (1.5,1.2) -- (1.5,1.8);
\draw[thick] (1.2,1.5) -- (1.8,1.5);
\draw[fill = black] (.5,1.5) circle (.25);
\end{scope}
\begin{scope}[shift={(2,2)}]
\draw[step = 1] (0,0) grid (2,2);
\draw[thick] (.5,.2) -- (0.5,.8);
\draw[thick] (.2,.5) -- (.8,.5);
\draw[fill = white] (1.5,.5) circle (.25);
%
\draw[thick] (1.5,1.2) -- (1.5,1.8);
\draw[thick] (1.2,1.5) -- (1.8,1.5);
\draw[fill = black] (.5,1.5) circle (.25);
\end{scope}
\begin{scope}[shift={(4,2)}]
\draw[step = 1] (0,0) grid (2,2);
\draw[thick] (.5,.2) -- (0.5,.8);
\draw[thick] (.2,.5) -- (.8,.5);
\draw[fill = white] (1.5,.5) circle (.25);
%
\draw[thick] (1.5,1.2) -- (1.5,1.8);
\draw[thick] (1.2,1.5) -- (1.8,1.5);
\draw[fill = black] (.5,1.5) circle (.25);
\end{scope}
\begin{scope}[shift={(2,4)}]
\draw[step = 1] (0,0) grid (2,2);
\draw[thick] (.5,.2) -- (0.5,.8);
\draw[thick] (.2,.5) -- (.8,.5);
\draw[fill = white] (1.5,.5) circle (.25);
%
\draw[thick] (1.5,1.2) -- (1.5,1.8);
\draw[thick] (1.2,1.5) -- (1.8,1.5);
\draw[fill = black] (.5,1.5) circle (.25);
\end{scope}
\begin{scope}[shift={(4,4)}]
\draw[step = 1] (0,0) grid (2,2);
\draw[thick] (.5,.2) -- (0.5,.8);
\draw[thick] (.2,.5) -- (.8,.5);
\draw[fill = white] (1.5,.5) circle (.25);
%
\draw[thick] (1.5,1.2) -- (1.5,1.8);
\draw[thick] (1.2,1.5) -- (1.8,1.5);
\draw[fill = black] (.5,1.5) circle (.25);
\end{scope}
\end{scope}
\draw[thick] (.5,.2) -- (0.5,.8); \draw[thick] (.2,.5) -- (.8,.5);
\begin{scope}[shift={(1,1)}] \draw[thick] (.5,.2) -- (0.5,.8); \draw[thick] (.2,.5) -- (.8,.5); \end{scope}
\begin{scope}[shift={(2,1)}] \draw[thick] (.5,.2) -- (0.5,.8); \draw[thick] (.2,.5) -- (.8,.5); \end{scope}
\begin{scope}[shift={(1,2)}] \draw[thick] (.5,.2) -- (0.5,.8); \draw[thick] (.2,.5) -- (.8,.5); \end{scope}
\begin{scope}[shift={(2,2)}] \draw[thick] (.5,.2) -- (0.5,.8); \draw[thick] (.2,.5) -- (.8,.5); \end{scope}
\begin{scope}[shift={(1,3)}] \draw[thick] (.5,.2) -- (0.5,.8); \draw[thick] (.2,.5) -- (.8,.5); \end{scope}
\begin{scope}[shift={(2,3)}] \draw[thick] (.5,.2) -- (0.5,.8); \draw[thick] (.2,.5) -- (.8,.5); \end{scope}
\begin{scope}[shift={(3,3)}] \draw[thick] (.5,.2) -- (0.5,.8); \draw[thick] (.2,.5) -- (.8,.5); \end{scope}
\begin{scope}[shift={(4,3)}] \draw[thick] (.5,.2) -- (0.5,.8); \draw[thick] (.2,.5) -- (.8,.5); \end{scope}
\begin{scope}[shift={(1,4)}] \draw[thick] (.5,.2) -- (0.5,.8); \draw[thick] (.2,.5) -- (.8,.5); \end{scope}
\begin{scope}[shift={(2,4)}] \draw[thick] (.5,.2) -- (0.5,.8); \draw[thick] (.2,.5) -- (.8,.5); \end{scope}
\begin{scope}[shift={(3,4)}] \draw[thick] (.5,.2) -- (0.5,.8); \draw[thick] (.2,.5) -- (.8,.5); \end{scope}
\begin{scope}[shift={(4,4)}] \draw[thick] (.5,.2) -- (0.5,.8); \draw[thick] (.2,.5) -- (.8,.5); \end{scope}
\begin{scope}[shift={(2,5)}] \draw[thick] (.5,.2) -- (0.5,.8); \draw[thick] (.2,.5) -- (.8,.5); \end{scope}
\begin{scope}[shift={(3,5)}] \draw[thick] (.5,.2) -- (0.5,.8); \draw[thick] (.2,.5) -- (.8,.5); \end{scope}
\begin{scope}[shift={(4,5)}] \draw[thick] (.5,.2) -- (0.5,.8); \draw[thick] (.2,.5) -- (.8,.5); \end{scope}
\begin{scope}[shift={(4,6)}] \draw[thick] (.5,.2) -- (0.5,.8); \draw[thick] (.2,.5) -- (.8,.5); \end{scope}
\begin{scope}[shift={(4,7)}] \draw[thick] (.5,.2) -- (0.5,.8); \draw[thick] (.2,.5) -- (.8,.5); \end{scope}
\begin{scope}[shift={(5,7)}] \draw[thick] (.5,.2) -- (0.5,.8); \draw[thick] (.2,.5) -- (.8,.5); \end{scope}
\begin{scope}[shift={(6,7)}] \draw[thick] (.5,.2) -- (0.5,.8); \draw[thick] (.2,.5) -- (.8,.5); \end{scope}
\begin{scope}[shift={(7,7)}] \draw[thick] (.5,.2) -- (0.5,.8); \draw[thick] (.2,.5) -- (.8,.5); \end{scope}
\begin{scope}[shift={(8,7)}] \draw[thick] (.5,.2) -- (0.5,.8); \draw[thick] (.2,.5) -- (.8,.5); \end{scope}
\begin{scope}[shift={(0,8)}] \draw[thick] (.5,.2) -- (0.5,.8); \draw[thick] (.2,.5) -- (.8,.5); \end{scope}
\begin{scope}[shift={(9,8)}] \draw[thick] (.5,.2) -- (0.5,.8); \draw[thick] (.2,.5) -- (.8,.5); \end{scope}
\begin{scope}[shift={(1,0)}] \draw[fill = white] (.5,.5) circle (.25); \end{scope}
\begin{scope}[shift={(2,0)}] \draw[fill = white] (.5,.5) circle (.25); \end{scope}
\begin{scope}[shift={(3,0)}] \draw[fill = white] (.5,.5) circle (.25); \end{scope}
\begin{scope}[shift={(4,0)}] \draw[fill = white] (.5,.5) circle (.25); \end{scope}
\begin{scope}[shift={(5,0)}] \draw[fill = white] (.5,.5) circle (.25); \end{scope}
\begin{scope}[shift={(6,0)}] \draw[fill = white] (.5,.5) circle (.25); \end{scope}
\begin{scope}[shift={(7,0)}] \draw[fill = white] (.5,.5) circle (.25); \end{scope}
\begin{scope}[shift={(8,0)}] \draw[fill = white] (.5,.5) circle (.25); \end{scope}
\begin{scope}[shift={(9,0)}] \draw[fill = white] (.5,.5) circle (.25); \end{scope}
\begin{scope}[shift={(9,1)}] \draw[fill = white] (.5,.5) circle (.25); \end{scope}
\begin{scope}[shift={(9,2)}] \draw[fill = white] (.5,.5) circle (.25); \end{scope}
\begin{scope}[shift={(9,3)}] \draw[fill = white] (.5,.5) circle (.25); \end{scope}
\begin{scope}[shift={(9,4)}] \draw[fill = white] (.5,.5) circle (.25); \end{scope}
\begin{scope}[shift={(9,5)}] \draw[fill = white] (.5,.5) circle (.25); \end{scope}
\begin{scope}[shift={(9,6)}] \draw[fill = white] (.5,.5) circle (.25); \end{scope}
\begin{scope}[shift={(9,7)}] \draw[fill = white] (.5,.5) circle (.25); \end{scope}
\begin{scope}[shift={(0,1)}] \draw[fill = black] (.5,.5) circle (.25); \end{scope}
\begin{scope}[shift={(0,2)}] \draw[fill = black] (.5,.5) circle (.25); \end{scope}
\begin{scope}[shift={(0,3)}] \draw[fill = black] (.5,.5) circle (.25); \end{scope}
\begin{scope}[shift={(0,4)}] \draw[fill = black] (.5,.5) circle (.25); \end{scope}
\begin{scope}[shift={(0,5)}] \draw[fill = black] (.5,.5) circle (.25); \end{scope}
\begin{scope}[shift={(0,6)}] \draw[fill = black] (.5,.5) circle (.25); \end{scope}
\begin{scope}[shift={(0,7)}] \draw[fill = black] (.5,.5) circle (.25); \end{scope}
\begin{scope}[shift={(1,8)}] \draw[fill = black] (.5,.5) circle (.25); \end{scope}
\begin{scope}[shift={(2,8)}] \draw[fill = white] (.5,.5) circle (.25); \end{scope}
\begin{scope}[shift={(3,8)}] \draw[fill = black] (.5,.5) circle (.25); \end{scope}
\begin{scope}[shift={(4,8)}] \draw[fill = black] (.5,.5) circle (.25); \end{scope}
\begin{scope}[shift={(5,8)}] \draw[fill = black] (.5,.5) circle (.25); \end{scope}
\begin{scope}[shift={(6,8)}] \draw[fill = black] (.5,.5) circle (.25); \end{scope}
\begin{scope}[shift={(7,8)}] \draw[fill = black] (.5,.5) circle (.25); \end{scope}
\begin{scope}[shift={(8,8)}] \draw[fill = black] (.5,.5) circle (.25); \end{scope}
\draw (10.2,0) node {$.$};
\end{scope}
\end{tikzpicture}
\end{displaymath}
\end{example}

All the minimal violations obtained via the injection of Theorem \ref{thm:sad_injection} have the feature that, in the associated pipe dream, the pipes involved in the violation of the distinguished property take only one turn each. However, this is not true in general; it is possible for the pipes involved in the violation of the distinguished property to take arbitrarily many turns. For instance, one can arrange copies of the $2 \times 2$ block (\ref{eqn:2_by_2_widget}) in a serpentine pattern. 
\begin{displaymath}
\begin{tikzpicture}
\begin{scope}[scale = .65]
\draw (0,0) -- (11,0) -- (11,9) -- (0,9) -- (0,0);
\draw[step=1] (11,0) grid (9,3);
\draw[step=1] (9,1) grid (7,5);
\draw[step=1] (7,3) grid (5,5);
\draw[line width = 4pt] (11,1) -- (7,1) -- (7,5);
\draw[line width = 4pt] (9,1) -- (9,5) -- (5,5) -- (5,3) -- (9,3);
\draw[line width = 4pt] (9,1) -- (11,1) -- (11,3) -- (7,3);
\draw[step=1] (0,9) grid (2,6);
\draw[step=1] (2,8) grid (4,6);
\draw[line width = 4pt] (0,8) -- (4,8) -- (4,6) -- (0,6) -- (0,8) -- (2,8) -- (2,6);
%
%
%
\begin{scope}[xshift = 9cm, yshift = 1cm]
\draw[thick] (.5,.2) -- (0.5,.8);
\draw[thick] (.2,.5) -- (.8,.5);
\draw[fill = white] (1.5,.5) circle (.25);
%
\draw[thick] (1.5,1.2) -- (1.5,1.8);
\draw[thick] (1.2,1.5) -- (1.8,1.5);
\draw[fill = black] (.5,1.5) circle (.25);
\end{scope}
\begin{scope}[xshift = 7cm, yshift = 1cm]
\draw[thick] (.5,.2) -- (0.5,.8);
\draw[thick] (.2,.5) -- (.8,.5);
\draw[fill = white] (1.5,.5) circle (.25);
%
\draw[thick] (1.5,1.2) -- (1.5,1.8);
\draw[thick] (1.2,1.5) -- (1.8,1.5);
\draw[fill = black] (.5,1.5) circle (.25);
\end{scope}
\begin{scope}[xshift = 7cm, yshift = 3cm]
\draw[thick] (.5,.2) -- (0.5,.8);
\draw[thick] (.2,.5) -- (.8,.5);
\draw[fill = white] (1.5,.5) circle (.25);
%
\draw[thick] (1.5,1.2) -- (1.5,1.8);
\draw[thick] (1.2,1.5) -- (1.8,1.5);
\draw[fill = black] (.5,1.5) circle (.25);
\end{scope}
\begin{scope}[xshift = 5cm, yshift = 3cm]
\draw[thick] (.5,.2) -- (0.5,.8);
\draw[thick] (.2,.5) -- (.8,.5);
\draw[fill = white] (1.5,.5) circle (.25);
%
\draw[thick] (1.5,1.2) -- (1.5,1.8);
\draw[thick] (1.2,1.5) -- (1.8,1.5);
\draw[fill = black] (.5,1.5) circle (.25);
\end{scope}
\begin{scope}[xshift = 0cm, yshift = 6cm]
\draw[thick] (.5,.2) -- (0.5,.8);
\draw[thick] (.2,.5) -- (.8,.5);
\draw[fill = white] (1.5,.5) circle (.25);
%
\draw[thick] (1.5,1.2) -- (1.5,1.8);
\draw[thick] (1.2,1.5) -- (1.8,1.5);
\draw[fill = black] (.5,1.5) circle (.25);
\end{scope}
\begin{scope}[xshift = 2cm, yshift = 6cm]
\draw[thick] (.5,.2) -- (0.5,.8);
\draw[thick] (.2,.5) -- (.8,.5);
\draw[fill = white] (1.5,.5) circle (.25);
%
\draw[thick] (1.5,1.2) -- (1.5,1.8);
\draw[thick] (1.2,1.5) -- (1.8,1.5);
\draw[fill = black] (.5,1.5) circle (.25);
\end{scope}
%
%
\draw[thick] (9.5,.2) -- (9.5,.8);
\draw[thick] (9.2,.5) -- (9.8,.5);
\draw[fill = white] (10.5,.5) circle (.25);
%
%
\draw[thick] (.5,8.2) -- (.5,8.8);
\draw[thick] (.2,8.5) -- (.8,8.5);
\draw[thick] (1.5,8.2) -- (1.5,8.8);
\draw[thick] (1.2,8.5) -- (1.8,8.5);
%
%
\draw (4.25,5.75) node {$.$};
\draw (4.5,5.5) node {$.$};
\draw (4.75,5.25) node {$.$};
\begin{scope}[xshift=1cm, yshift=0cm]
\draw (4.25,5.75) node {$.$};
\draw (4.5,5.5) node {$.$};
\draw (4.75,5.25) node {$.$};
\end{scope}
\draw (5,6) node {$.$};
\begin{scope}[xshift=0cm, yshift=1cm]
\draw (4.25,5.75) node {$.$};
\draw (4.5,5.5) node {$.$};
\draw (4.75,5.25) node {$.$};
\end{scope}
\begin{scope}[xshift = -1cm, yshift = -1cm]
\begin{scope}[xshift=1cm, yshift=0cm]
\draw (4.25,5.75) node {$.$};
\draw (4.5,5.5) node {$.$};
\draw (4.75,5.25) node {$.$};
\end{scope}
\draw (5,6) node {$.$};
\begin{scope}[xshift=0cm, yshift=1cm]
\draw (4.25,5.75) node {$.$};
\draw (4.5,5.5) node {$.$};
\draw (4.75,5.25) node {$.$};
\end{scope}
\end{scope}
\begin{scope}[xshift = -1cm, yshift = -2cm]
\begin{scope}[xshift=1cm, yshift=0cm]
\draw (4.25,5.75) node {$.$};
\draw (4.5,5.5) node {$.$};
\draw (4.75,5.25) node {$.$};
\end{scope}
\draw (5,6) node {$.$};
\begin{scope}[xshift=0cm, yshift=1cm]
\draw (4.25,5.75) node {$.$};
\draw (4.5,5.5) node {$.$};
\draw (4.75,5.25) node {$.$};
\end{scope}
\end{scope}
\begin{scope}[xshift = -2cm, yshift = -1cm]
\begin{scope}[xshift=1cm, yshift=0cm]
\draw (4.25,5.75) node {$.$};
\draw (4.5,5.5) node {$.$};
\draw (4.75,5.25) node {$.$};
\end{scope}
\draw (5,6) node {$.$};
\begin{scope}[xshift=0cm, yshift=1cm]
\draw (4.25,5.75) node {$.$};
\draw (4.5,5.5) node {$.$};
\draw (4.75,5.25) node {$.$};
\end{scope}
\end{scope}
\begin{scope}[xshift = 2cm, yshift=2cm]
\begin{scope}[xshift = -1cm, yshift = -2cm]
\begin{scope}[xshift=1cm, yshift=0cm]
\draw (4.25,5.75) node {$.$};
\draw (4.5,5.5) node {$.$};
\draw (4.75,5.25) node {$.$};
\end{scope}
\draw (5,6) node {$.$};
\begin{scope}[xshift=0cm, yshift=1cm]
\draw (4.25,5.75) node {$.$};
\draw (4.5,5.5) node {$.$};
\draw (4.75,5.25) node {$.$};
\end{scope}
\end{scope}
\begin{scope}[xshift = -2cm, yshift = -1cm]
\begin{scope}[xshift=1cm, yshift=0cm]
\draw (4.25,5.75) node {$.$};
\draw (4.5,5.5) node {$.$};
\draw (4.75,5.25) node {$.$};
\end{scope}
\draw (5,6) node {$.$};
\begin{scope}[xshift=0cm, yshift=1cm]
\draw (4.25,5.75) node {$.$};
\draw (4.5,5.5) node {$.$};
\draw (4.75,5.25) node {$.$};
\end{scope}
\end{scope}
\end{scope}
\end{scope}
\end{tikzpicture}
\end{displaymath}
\noindent
Here, we've highlighted to subdiagrams (\ref{eqn:2_by_2_widget}) to make the pattern clearer. All boxes not drawn are filled with white stones. One may check that this example is a minimal violation.

In lieu of a good description of Go-diagrams in terms of forbidden subdiagrams, we offer an algorithmic characterization of when a filling of a Ferrers shape with black stones, white stones, and pluses is a Go-diagram. Algorithm \ref{alg:partner} provides a method of producing a {\textit{partner}} square to any square in the diagram. A $\bullet/\circ/+$-diagram will be a Go-diagram if and only if a square has a partner if and only if it's filled with a black stone. In general, the partner of a black stone will be different than the white stone it serves as an uncrossing pair to. This notion of partner has two advantages over crossing/uncrossing pairs:
\begin{itemize}
\item Replacing all black stones and their partners with pluses simultaneously yields a reduced $\circ/+$-diagram for the same pair of permutations.
\item For a black stone in box $b$, replacing all black stones in $b^{in}$ and their partners with pluses simultaneously does not alter the location of $b$'s partner.
\end{itemize}
The following example shows that these properties are not enjoyed by crossing/uncrossing pairs.

\begin{example}
Consider the following Go-diagram, where the boxes containing one crossing/uncrossing pair have been shaded blue (dark gray in grayscale) and those containing the other have been shaded yellow (light gray).
\begin{equation} \label{eqn:shaded_box_example}
\begin{tikzpicture}
\begin{scope}[scale = .75]
\draw [fill=yellow] (0,3) rectangle (1,2);
\draw [fill=yellow] (2,2) rectangle (3,1);
\draw [fill=blue] (1,3) rectangle (2,2);
\draw [fill=blue] (2,1) rectangle (3,0);
\draw (0,0) -- (0,3) -- (3,3) -- (3,0) -- (0,0);
\draw (0,1) -- (3,1);
\draw (0,2) -- (3,2);
\draw (1,0) -- (1,3);
\draw (2,0) -- (2,3);
\draw[fill = black] (.5,2.5) circle (.25);
\draw[fill = black] (1.5,2.5) circle (.25);
\draw[thick] (2.5,2.2) -- (2.5,2.8);
\draw[thick] (2.2,2.5) -- (2.8,2.5);
\draw[thick] (.5,1.2) -- (0.5,1.8);
\draw[thick] (.2,1.5) -- (.8,1.5);
\draw[fill = white] (1.5,1.5) circle (.25);
\draw[fill = white] (2.5,1.5) circle (.25);
\draw[fill = white] (.5,.5) circle (.25);
\draw[thick] (1.5,.2) -- (1.5,.8);
\draw[thick] (1.2,.5) -- (1.8,.5);
\draw[fill = white] (2.5,.5) circle (.25);
\end{scope}
\end{tikzpicture}
\end{equation}
\noindent
If we undo the blue crossing uncrossing pair, the diagram transforms into the following. Note that the location of the white stone which was part of the yellow pair has moved.
\begin{displaymath}
\begin{tikzpicture}
\begin{scope}[scale = .75]
\draw [fill=yellow] (0,3) rectangle (1,2);
\draw [fill=yellow] (1,2) rectangle (2,1);
\draw (0,0) -- (0,3) -- (3,3) -- (3,0) -- (0,0);
\draw (0,1) -- (3,1);
\draw (0,2) -- (3,2);
\draw (1,0) -- (1,3);
\draw (2,0) -- (2,3);
\draw[fill = black] (.5,2.5) circle (.25);
\draw[thick] (1.5,2.2) -- (1.5,2.8);
\draw[thick] (1.2,2.5) -- (1.8,2.5);
\draw[thick] (2.5,2.2) -- (2.5,2.8);
\draw[thick] (2.2,2.5) -- (2.8,2.5);
\draw[thick] (.5,1.2) -- (0.5,1.8);
\draw[thick] (.2,1.5) -- (.8,1.5);
\draw[fill = white] (1.5,1.5) circle (.25);
\draw[fill = white] (2.5,1.5) circle (.25);
\draw[fill = white] (.5,.5) circle (.25);
\draw[thick] (1.5,.2) -- (1.5,.8);
\draw[thick] (1.2,.5) -- (1.8,.5);
\draw[thick] (2.5,.2) -- (2.5,.8);
\draw[thick] (2.2,.5) -- (2.8,.5);
\end{scope}
\end{tikzpicture}
\end{displaymath}
\noindent
If we undo the yellow crossing/uncrossing pair, the diagram becomes the following. Note that in this case, the location of the blue black stone has moved.
\begin{displaymath}
\begin{tikzpicture}
\begin{scope}[scale = .75]
\draw [fill=blue] (1,2) rectangle (2,1);
\draw [fill=blue] (2,1) rectangle (3,0);
\draw (0,0) -- (0,3) -- (3,3) -- (3,0) -- (0,0);
\draw (0,1) -- (3,1);
\draw (0,2) -- (3,2);
\draw (1,0) -- (1,3);
\draw (2,0) -- (2,3);
\draw[thick] (.5,2.2) -- (.5,2.8);
\draw[thick] (.2,2.5) -- (.8,2.5);
\draw[fill = white] (1.5,2.5) circle (.25);
\draw[thick] (2.5,2.2) -- (2.5,2.8);
\draw[thick] (2.2,2.5) -- (2.8,2.5);
\draw[thick] (.5,1.2) -- (0.5,1.8);
\draw[thick] (.2,1.5) -- (.8,1.5);
\draw[fill = black] (1.5,1.5) circle (.25);
\draw[thick] (2.5,1.2) -- (2.5,1.8);
\draw[thick] (2.2,1.5) -- (2.8,1.5);
\draw[fill = white] (.5,.5) circle (.25);
\draw[thick] (1.5,.2) -- (1.5,.8);
\draw[thick] (1.2,.5) -- (1.8,.5);
\draw[fill = white] (2.5,.5) circle (.25);
\end{scope}
\end{tikzpicture}
\end{displaymath}
\noindent
In either case, after undoing the last crossing/uncrossing pair and performing \Le-moves if necessary, we arrive at the following \Le-diagram.
\begin{displaymath}
\begin{tikzpicture}
\begin{scope}[scale = .75]
\draw (0,0) -- (0,3) -- (3,3) -- (3,0) -- (0,0);
\draw (0,1) -- (3,1);
\draw (0,2) -- (3,2);
\draw (1,0) -- (1,3);
\draw (2,0) -- (2,3);
\draw[thick] (.5,2.2) -- (.5,2.8);
\draw[thick] (.2,2.5) -- (.8,2.5);
\draw[fill = white] (1.5,2.5) circle (.25);
\draw[thick] (2.5,2.2) -- (2.5,2.8);
\draw[thick] (2.2,2.5) -- (2.8,2.5);
\draw[thick] (.5,1.2) -- (0.5,1.8);
\draw[thick] (.2,1.5) -- (.8,1.5);
\draw[thick] (1.5,1.2) -- (1.5,1.8);
\draw[thick] (1.2,1.5) -- (1.8,1.5);
\draw[thick] (2.5,1.2) -- (2.5,1.8);
\draw[thick] (2.2,1.5) -- (2.8,1.5);
\draw[fill = white] (.5,.5) circle (.25);
\draw[thick] (1.5,.2) -- (1.5,.8);
\draw[thick] (1.2,.5) -- (1.8,.5);
\draw[thick] (2.5,.2) -- (2.5,.8);
\draw[thick] (2.2,.5) -- (2.8,.5);
\end{scope}
\end{tikzpicture}
\end{displaymath}
Simultaneously replacing the stones the blue and yellow squares in (\ref{eqn:shaded_box_example}) with pluses yields
\begin{displaymath}
\begin{tikzpicture}
\begin{scope}[scale = .75]
\draw (0,0) -- (0,3) -- (3,3) -- (3,0) -- (0,0);
\draw (0,1) -- (3,1);
\draw (0,2) -- (3,2);
\draw (1,0) -- (1,3);
\draw (2,0) -- (2,3);
\draw[thick] (.5,2.2) -- (.5,2.8);
\draw[thick] (.2,2.5) -- (.8,2.5);
\draw[thick] (1.5,2.2) -- (1.5,2.8);
\draw[thick] (1.2,2.5) -- (1.8,2.5);
\draw[thick] (2.5,2.2) -- (2.5,2.8);
\draw[thick] (2.2,2.5) -- (2.8,2.5);
\draw[thick] (.5,1.2) -- (0.5,1.8);
\draw[thick] (.2,1.5) -- (.8,1.5);
\draw[fill = white] (1.5,1.5) circle (.25);
\draw[thick] (2.5,1.2) -- (2.5,1.8);
\draw[thick] (2.2,1.5) -- (2.8,1.5);
\draw[fill = white] (.5,.5) circle (.25);
\draw[thick] (1.5,.2) -- (1.5,.8);
\draw[thick] (1.2,.5) -- (1.8,.5);
\draw[thick] (2.5,.2) -- (2.5,.8);
\draw[thick] (2.2,.5) -- (2.8,.5);
\draw (3.2,0) node {$,$};
\end{scope}
\end{tikzpicture}
\end{displaymath}
\noindent
which is not a reduced diagram for the same permutation.
\end{example}

The problem of black stones moving around when undoing crossing/uncrossing pairs can be solved by undoing these crossing uncrossing pairs as black stones increase in the $\prec$ partial order. So, in our example we first undo the blue crossing/uncrossing pair, then undo the yellow one. The problem of the white stones involved in crossing/uncrossing pairs moving around is however unavoidable. The following algorithm provides an inductive procedure to compute the {\textit{partner}} of a box in a $\bullet/\circ/+$-diagram.

\begin{algorithm} \label{alg:partner}
Given a box $b = (i_b,j_b)$, suppose all boxes containing black stones in $b^{in}$ aside from $b$ have been assigned partners.
\begin{itemize}
\item[1.] If there is no black stone or plus to the right of $b$ in row $i_b$ or no black stone or plus below $b$ in column $j_b$, then $b$ has no partner.
\item[2.] Trace right from $b$ row $i_b$ until you hit a black stone or a plus in a box $c = (i_b,j_c)$ and down from $b$ down in column $j_b$ until you hit a black stone or plus in a box $d = (i_d,j_b)$.
\item[3.] If any of the following situations occur, $b$ has no partner.
\begin{itemize}
\item[3.1.] There is no box $e = (i_d,j_c)$.
\item[3.2.] There is a plus or a black stone in a square $(i,j_c)$ with $i_b < i < i_d$.
\item[3.3.] There is a plus or a black stone in a square $(i_d,j)$ with $j_c < j < j_b$.
\end{itemize}
\item[4.] Otherwise, if $e$ contains a white stone, $e$ is $b$'s partner. Let $P_b$ be the path from $b$ right to $c$ then down to $e$ and let $Q_b$ be the path from $b$ down to $d$ then right to $e$.
\item[5.] If $e$ contains a plus or a black stone, construct a path $P_b$ starting at $b$ traveling to the right via the following procedure:
\begin{itemize}
\item[5.1.] If $P_b$ hits a plus while traveling right, switch from traveling right to down;
\item[5.2.] If $P_b$ hits a plus while traveling down, switch from traveling down to right;
\item[5.3.] If $P_b$ hits a black stone while traveling right, jump to that black stone's partner and continue traveling down;
\item[5.4.] If $P_b$ hits a black stone while traveling down, jump to that black stone's partner and continue traveling right;
\item[5.5.] If $P_b$ hits a white stone that was partnered with some other black stone in $b^{in}$ while traveling right, switch from traveling right to down;
\item[5.6.] If $P_b$ hits a white stone that was partnered with some other black stone in $b^{in}$ while traveling down, switch from traveling down to right.
\end{itemize}
\item[6.] Construct a path $Q_b$ starting at $b$ and traveling down following the same rules.
\item[7.] If $P_b$ and $Q_b$ meet and the first square they meet in (the largest square they meet in the $\prec$ partial order) contains a white stone, that square is $b$'s partner.
\item[8.] Otherwise, $b$ has no partner.
\end{itemize}
\end{algorithm}

\begin{example} \label{ex:partners}
Consider the following diagram, which is a Go-diagram.
\begin{displaymath}
\begin{tikzpicture}
\begin{scope}[scale = .75]
\draw[step=1] (0,1) grid (5,5);
\draw (5.3,4.5) node {$1$};
\draw (5.3,3.5) node {$2$};
\draw (5.3,2.5) node {$3$};
\draw (5.3,1.5) node {$4$};
\draw (4.5,.7) node {$5$};
\draw (3.5,.7) node {$6$};
\draw (2.5,.7) node {$7$};
\draw (1.5,.7) node {$8$};
\draw (.5,.7) node {$9$};
%
%
\draw[thick] (.5,1.2) -- (.5,1.8);  \draw[thick] (.2,1.5) -- (.8,1.5);
\draw[fill = white] (1.5,1.5) circle (.25);
\draw[fill = white] (2.5,1.5) circle (.25);
\draw[fill = white] (3.5,1.5) circle (.25);
\draw[fill = white] (4.5,1.5) circle (.25);
%
\draw[fill = black] (.5,2.5) circle (.25);
\draw[fill = white] (1.5,2.5) circle (.25);
\draw[fill = white] (2.5,2.5) circle (.25);
\draw[thick] (3.5,2.2) -- (3.5,2.8);  \draw[thick] (3.2,2.5) -- (3.8,2.5);
\draw[fill = white] (4.5,2.5) circle (.25);
%
\draw[fill = black] (.5,3.5) circle (.25);
\draw[fill = white] (1.5,3.5) circle (.25);
\draw[thick] (2.5,3.2) -- (2.5,3.8);  \draw[thick] (2.2,3.5) -- (2.8,3.5);
\draw[fill = black] (3.5,3.5) circle (.25);
\draw[thick] (4.5,3.2) -- (4.5,3.8);  \draw[thick] (4.2,3.5) -- (4.8,3.5);
%
\draw[fill = black] (.5,4.5) circle (.25);
\draw[fill = white] (1.5,4.5) circle (.25);
\draw[thick] (2.5,4.2) -- (2.5,4.8);  \draw[thick] (2.2,4.5) -- (2.8,4.5);
\draw[fill = white] (3.5,4.5) circle (.25);
\draw[thick] (4.5,4.2) -- (4.5,4.8);  \draw[thick] (4.2,4.5) -- (4.8,4.5);
\end{scope}
\end{tikzpicture}
\end{displaymath}
None of the boxes in row $4$ and in columns $5$ and $8$ will have partners, since none of these boxes have a plus or a black stone both below them and to their right. The boxes $(2,7), (3,6),$ and $(3,7)$ do not have partners for similar reasons.

It is straight forward to see that the black stone in box $(2,6)$ is partnered with the white stone in box $(3,5)$; the black stone in box $(3,9)$ is partnered with the white stone in box $(4,6)$; and the black stone in box $(2,9)$ is partnered with the white stone in box $(3,7)$.

The box in $(1,7)$ has pluses to its right and below it. However, from these pluses, if we try to trace down from $(1,5)$ and right from $(2,7)$ to where they meet in $(2,5)$, we notice there is a black stone along the line from $(2,7)$ to $(2,5)$. Since the construction only allowed for white stones along these lines, $(1,7)$ does not have a partner.

The box $(2,6)$ has boxes with pluses or black stones below it and to its right. Tracing right from $(2,6)$ and down from $(1,5)$, everything is fine. Since there is a plus in $(2,5)$, we must construct paths $P$ and $Q$ as dictated by the construction.

\begin{displaymath}
\begin{tikzpicture}
\begin{scope}[scale = .75]
\draw[step=1] (0,1) grid (5,5);
\draw (5.3,4.5) node {$1$};
\draw (5.3,3.5) node {$2$};
\draw (5.3,2.5) node {$3$};
\draw (5.3,1.5) node {$4$};
\draw (4.5,.7) node {$5$};
\draw (3.5,.7) node {$6$};
\draw (2.5,.7) node {$7$};
\draw (1.5,.7) node {$8$};
\draw (.5,.7) node {$9$};
%
\draw[thick] (.5,1.2) -- (.5,1.8);  \draw[thick] (.2,1.5) -- (.8,1.5);
\draw[fill = white] (1.5,1.5) circle (.25);
\draw[fill = white] (2.5,1.5) circle (.25);
\draw[fill = white] (3.5,1.5) circle (.25);
\draw[fill = white] (4.5,1.5) circle (.25);
%
\draw[fill = black] (.5,2.5) circle (.25);
\draw[fill = white] (1.5,2.5) circle (.25);
\draw[fill = white] (2.5,2.5) circle (.25);
\draw[thick] (3.5,2.2) -- (3.5,2.8);  \draw[thick] (3.2,2.5) -- (3.8,2.5);
\draw[fill = white] (4.5,2.5) circle (.25);
%
\draw[fill = black] (.5,3.5) circle (.25);
\draw[fill = white] (1.5,3.5) circle (.25);
\draw[thick] (2.5,3.2) -- (2.5,3.8);  \draw[thick] (2.2,3.5) -- (2.8,3.5);
\draw[fill = black] (3.5,3.5) circle (.25);
\draw[thick] (4.5,3.2) -- (4.5,3.8);  \draw[thick] (4.2,3.5) -- (4.8,3.5);
%
\draw[fill = black] (.5,4.5) circle (.25);
\draw[fill = white] (1.5,4.5) circle (.25);
\draw[thick] (2.5,4.2) -- (2.5,4.8);  \draw[thick] (2.2,4.5) -- (2.8,4.5);
\draw[fill = white] (3.5,4.5) circle (.25);
\draw[thick] (4.5,4.2) -- (4.5,4.8);  \draw[thick] (4.2,4.5) -- (4.8,4.5);
%
\draw[line width=3pt, red] (3.81,4.5) -- (4.2,4.5);
\draw[line width=3pt, red] (4.2,4.5) arc (90:0:.3cm);
\draw[line width=3pt, red] (4.5,4.2) -- (4.5,3.8);
\draw[line width=3pt, red] (4.8,3.5) arc (270:180:.3cm);
\draw[line width=3pt, red] (4.8,3.5) -- (5.1,3.5);
%
\draw[line width=3pt, red] (3.5,4.18) -- (3.5,3.5) -- (4.5,2.5) -- (5.1,2.5);
\end{scope}
\end{tikzpicture}
\end{displaymath}

\noindent
Since these paths do not meet, $(2,6)$ does not have a partner.

Finally, the box $(1,9)$ has boxes with pluses or black stones below it and to its right. Drawing out the paths $P$ and $Q$ as described, we obtain the following. Note that the path $Q$ takes a turn in box $(4,6)$ because that box was partnered with the black stone in $(3,9)$.

\begin{displaymath}
\begin{tikzpicture}
\begin{scope}[scale = .75]
\draw[step=1] (0,1) grid (5,5);
\draw (5.3,4.5) node {$1$};
\draw (5.3,3.5) node {$2$};
\draw (5.3,2.5) node {$3$};
\draw (5.3,1.5) node {$4$};
\draw (4.5,.7) node {$5$};
\draw (3.5,.7) node {$6$};
\draw (2.5,.7) node {$7$};
\draw (1.5,.7) node {$8$};
\draw (.5,.7) node {$9$};
%
%
\draw[thick] (.5,1.2) -- (.5,1.8);  \draw[thick] (.2,1.5) -- (.8,1.5);
\draw[fill = white] (1.5,1.5) circle (.25);
\draw[fill = white] (2.5,1.5) circle (.25);
\draw[fill = white] (3.5,1.5) circle (.25);
\draw[fill = white] (4.5,1.5) circle (.25);
%
\draw[fill = black] (.5,2.5) circle (.25);
\draw[fill = white] (1.5,2.5) circle (.25);
\draw[fill = white] (2.5,2.5) circle (.25);
\draw[thick] (3.5,2.2) -- (3.5,2.8);  \draw[thick] (3.2,2.5) -- (3.8,2.5);
\draw[fill = white] (4.5,2.5) circle (.25);
%
\draw[fill = black] (.5,3.5) circle (.25);
\draw[fill = white] (1.5,3.5) circle (.25);
\draw[thick] (2.5,3.2) -- (2.5,3.8);  \draw[thick] (2.2,3.5) -- (2.8,3.5);
\draw[fill = black] (3.5,3.5) circle (.25);
\draw[thick] (4.5,3.2) -- (4.5,3.8);  \draw[thick] (4.2,3.5) -- (4.8,3.5);
%
\draw[fill = black] (.5,4.5) circle (.25);
\draw[fill = white] (1.5,4.5) circle (.25);
\draw[thick] (2.5,4.2) -- (2.5,4.8);  \draw[thick] (2.2,4.5) -- (2.8,4.5);
\draw[fill = white] (3.5,4.5) circle (.25);
\draw[thick] (4.5,4.2) -- (4.5,4.8);  \draw[thick] (4.2,4.5) -- (4.8,4.5);
%
\draw[line width=3pt, red] (.81,4.5) -- (2.2,4.5);
\draw[line width=3pt, red] (2.2,4.5) arc (90:0:.3cm);
\draw[line width=3pt, red] (2.5,4.2) -- (2.5,3.8);
\draw[line width=3pt, red] (2.5,3.8) arc (180:270:.3cm);
\draw[line width=3pt, red] (2.8,3.5) -- (3.5,3.5) -- (4.5,2.5) -- (4.5,1.81);
%
\draw[line width=3pt, red] (.5,4.19) -- (.5,3.5) -- (2.5,2.5) -- (3.2,2.5);
\draw[line width=3pt, red] (3.2,2.5) arc (90:0:.3cm);
\draw[line width=3pt, red] (3.5,2.2) -- (3.5,1.8);
\draw[line width=3pt, red] (3.5,1.8) arc (180:270:.3cm);
\draw[line width=3pt, red] (3.8,1.5) -- (4.19,1.5);
\end{scope}
\end{tikzpicture}
\end{displaymath}
Since these two paths meet in a white stone at $(4,5)$, box $(1,9)$ is partnered to box $(4,5)$.
\end{example}

\begin{proposition} \label{prop:collecting_facts}
Suppose $b = (i_b,j_b)$ is a box in a $\bullet/\circ/+$-diagram $D$ which has a partner.
\begin{itemize}
\item[(i)] $b$'s partner is in $b^{in}$.
\item[(ii)] If the paths $P_b$ and $Q_b$ in Algorithm \ref{alg:partner} are constructed, all boxes encountered along these paths are in $b^{in}$.
\item[(iii)] $b$'s partner is a white stone.
\item[(iv)] No other box shares a partner with $b$.
\item[(v)] Let $c = (i_b,j_c)$, $d = (i_d,j_b)$, and $e = (i_d,j_c)$ be as in Algorithm \ref{alg:partner}. If there is a plus or a black stone in $e$, one of $c$ or $d$ must contain a black stone.
\item[(vi)] Let $a \in D$ be incomparable to $b$ in the $\prec$ partial order. Suppose $a$ has a partner $p$. Then $p$ is not along the paths $P_b$ or $Q_b$ from $b$ constructed in Algorithm \ref{alg:partner}.
\end{itemize}
\end{proposition}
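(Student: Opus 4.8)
The plan is to establish all six parts by an interlocking induction, each part read off from Algorithm~\ref{alg:partner}. Concretely: parts (i), (ii), (iii) are proved for a box $b$ by induction on $|b^{in}|$; then (v) is proved directly; and finally (iv) and (vi) are proved together. Recall that when $b$ has a partner the algorithm returns either the box $e$ at step~4 or the first box at which $P_b$ and $Q_b$ meet at step~7, and in either case the returned box lies on both $P_b$ and $Q_b$ (in the step-4 case we take $P_b$, $Q_b$ to be the L-shaped paths defined there). Part (iii) is then immediate: a partner is declared only in clauses~4 and~7, each of which requires the box in question to contain a white stone.

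For (i) and (ii): tracing right strictly decreases the column label and tracing down strictly increases the row label, so $c=(i_b,j_c)$ has $j_c<j_b$ and $d=(i_d,j_b)$ has $i_d>i_b$; hence $e=(i_d,j_c)$ satisfies $j_c<j_b$ and $i_d>i_b$, i.e.\ $e\in b^{in}$. This settles (i) when the algorithm halts at step~4. Otherwise (i) follows from (ii), so it suffices to prove (ii). The path $P_b$ starts at $b$ in row $i_b$ moving right, so it begins in $b^{in}$, and every subsequent move keeps it there: a rightward step fixes the row and lowers the column, a downward step fixes the column and raises the row, a turn at a plus (clauses~5.1, 5.2) does not move the path, a jump at a black stone $f$ to the partner of $f$ (clauses~5.3, 5.4) lands in $f^{in}\subseteq b^{in}$ since $f\in b^{in}\setminus\{b\}$ and (i) holds for $f$ by induction, and a turn at a white stone partnered with a black stone $f'\in b^{in}\setminus\{b\}$ (clauses~5.5, 5.6) occurs at a box of $f'^{in}\subseteq b^{in}$ by the inductive (i). The same applies to $Q_b$, proving (ii) and hence (i).

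For (v), suppose toward a contradiction that both $c$ and $d$ hold pluses, $e$ holds a plus or a black stone, and $b$ nevertheless has a partner. Having cleared step~3, every box strictly between $b$ and $c$ in row $i_b$, between $c$ and $e$ in column $j_c$, between $b$ and $d$ in column $j_b$, and between $d$ and $e$ in row $i_d$ holds a white stone. Tracking $P_b$ (right to $c$, then down column $j_c$ to $e$) and $Q_b$ (down to $d$, then right along row $i_d$ to $e$), one checks that the only boxes on the initial segments of both paths are $b$ and $e$, so $e$ is the first box at which $P_b$ and $Q_b$ meet; since $e$ does not hold a white stone, clause~7 fails and $b$ has no partner, a contradiction, so one of $c$, $d$ must be a black stone. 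The one point needing care here is that an edge white stone might itself be the partner of an earlier black stone of $b^{in}$ and so reroute a path before it reaches $e$; such a partnered white stone would have to sit strictly inside the rectangle spanned by $b,c,d,e$, and one rules this out by tracing the paths the algorithm used to assign it, which cannot reach the edges of that rectangle.

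For (vi) --- the step I expect to be \emph{the main obstacle} --- we induct on the number of boxes of $D$ strictly below $b$ in $\prec$. Let $a$ be incomparable to $b$ with partner $p$; by (iii) and (i) for $a$, $p$ is a white stone in $a^{in}$, and after treating the two incomparable positions separately we may assume $a$ lies strictly northeast of $b$, i.e.\ $i_a<i_b$ and $j_a<j_b$. Suppose $p$ lies on $P_b$ (the $Q_b$ case is symmetric). Since $a\not\preccurlyeq b$ we have $a\notin b^{in}$, so clauses~5.5 and~5.6 do not fire when $P_b$ reaches $p$ and $P_b$ passes straight through it; following $P_b$ from $b$ up to $p$ and using the inductive hypothesis (and (iv) for pairs of black stones strictly below $b$, to know the detours cannot coincide with $p$) to control the detours forced by clauses~5.3 through~5.6, one shows that $P_b$ must pass through a box of $a^{in}$ in a manner incompatible with the paths the algorithm already committed to for $a$. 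Reconciling the recursive ``jump to a partner'' rule with the incomparability of $a$ and $b$ is the technical heart of the argument. Given (vi), part (iv) follows: if $b\neq b'$ share the partner $q$ (a white stone by (iii)) and are incomparable, then (vi) with $a=b'$ puts $q$ off $P_b\cup Q_b$, contradicting that $q$ lies on both; and if $b\prec b'$, then when processing $b'$ the box $b$ is a black stone of $(b')^{in}$ already assigned $q$, so when a path reaches $q$ clause~5.5 or~5.6 forces it to turn rather than stop (ruling out $q$ being returned at step~7), while the possibility that $q=e'$ is returned at step~4 is excluded by comparing the location of $b$ with $c',d',e'$ together with the fact that the boxes flanking $e'$ are white stones.
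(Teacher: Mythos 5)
Your treatment of (i), (ii), (iii), and (v) is sound and in places more careful than the paper's, which dismisses (i)--(iii) as immediately apparent from the algorithm and handles (v) exactly as you do: with $c$, $d$ both pluses the two paths are forced to first meet at $e$, which is not a white stone, so clause 7 fails and $b$ has no partner. Your worry in (v) about an edge white stone rerouting a path is legitimate and can be closed cleanly: such a stone would be the partner of a black stone $f\in b^{in}\setminus\{b\}$, and since partners lie in $f^{in}$, the box $f$ would have to sit in row $i_b$ strictly between $b$ and $c$ (or in column $j_b$ strictly between $b$ and $d$), where by the choice of $c$ and $d$ every box holds a white stone.

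The genuine gap is (vi), which you correctly flag as the heart of the matter but do not prove. ``One shows that $P_b$ must pass through a box of $a^{in}$ in a manner incompatible with the paths the algorithm already committed to for $a$'' states the goal rather than supplying an argument, and note that merely entering $a^{in}$ is no contradiction: $a^{in}\cap b^{in}$ is typically nonempty for incomparable $a,b$. The idea you are missing is that the path construction is \emph{reversible}: clauses 5.1--5.6 determine the next square and travel direction from the current ones, and equally determine the previous ones. The partner $p$ of $a$ is where $P_a$ and $Q_a$ meet, arriving traveling right and traveling down respectively; if $P_b$ or $Q_b$ passes through $p$, it arrives in one of those same two directions, so traced backward from $p$ it coincides with the reversal of $P_a$ or of $Q_a$. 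Hence either $a$ lies on a path from $b$, giving $a\in b^{in}$ by your part (ii), or $b$ lies on a path from $a$, giving $b\in a^{in}$; either way incomparability is contradicted. Without this observation your induction on boxes below $b$ has nothing to close it, and your proof of (iv), which leans on (vi) for the incomparable case and on a separate, somewhat fragile case analysis for $b\prec b'$, inherits the gap; the paper gets (iv) in one line from the same reversibility, since running the construction backward from a white stone determines at most one source box.
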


\begin{proof}
Points (i), (ii), and (iii) are immediately apparent from Algorithm \ref{alg:partner}. To prove point (iv), note that Algorithm \ref{alg:partner} is reversible.

For (v), suppose the box $e$ contains a black stone. Then, we must construct the paths $P_b$ and $Q_b$ in Algorithm \ref{alg:partner}. If both $c$ and $d$ contain pluses, then these paths first meet at the box $e$. So, $e$ is the partner of $b$, which contradicts point (iii) in this proposition. We remark that the case of point (v) where $e$ contains a plus is an artifact of the distinguished property for subwords. The case where $e$ contains a black stone is an artifact of the fact the crossings and uncrossings must alternate. 

For (vi), suppose the path $P_b$ goes through $p$. As remarked in the proof of (iii), the construction of these paths is reversible. So, $P_b$ agrees with $P_a$ or $Q_a$ eventually and thus goes through $a$ eventually, either before or after $b$. But, everything along paths from $b$ is in $b^{in}$ and everything along paths from $a$ is in $a^{in}$ by point (ii). This contradicts the incomparability of $a$ and $b$. \qedhere
\end{proof}

Observe that in Example \ref{ex:partners} the partners $(2,9)$ and $(3,7)$ do not constitute a crossing/uncrossing pair in the pipe dream associated to the diagram. However, if we change the boxes $(3,9)$ and $(4,6)$ to pluses, the black stone in $(2,9)$ and the white stone in $(3,8)$ will be a crossing/noncrossing pair. In fact, for any black stone in a box $b$ in this diagram, if we flip all of the black stones and their partners in $b^{in}$ to pluses, $b$ and its partner form a crossing/uncrossing pair in the new diagram. This observation generalizes.

Consider a $\bullet/\circ/+$-diagram $D$ such a square has a partner if and only if it is filled with a black stone. We'll see shortly that such diagrams are exactly Go-diagrams. Let $b$ be a box in a diagram $D$. Let $f(D,b)$ be the diagram obtained by replacing all black stones in boxes $c \prec b$ and all white stones in the partners of these boxes with pluses. Consider the following pair of properties:

\begin{itemize}
\item[(P1)] $v^{f(D,b)}_{b^{in}} = v^{D}_{b^{in}}$.
\item[(P2)] $b$ has a partner $p$ if and only if the boxes $b$ and $p$ form a crossing/uncrossing pair in $f(D,b)$.
\end{itemize}

\begin{lemma} \label{lem:inductive_step}
Let $D$ be a $\bullet/\circ/+$-diagram such that a square has a partner if and only if it is filled with a black stone. Let $b$ be a box in $D$ such that properties (i) and (ii) hold for all $c \prec b$. Then, properties (P1) and (P2) hold for $b$.
\end{lemma}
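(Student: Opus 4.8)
The plan is to treat the two properties separately; (P1) is essentially free, and all the substance is in (P2). For (P1), note that passing from $D$ to $f(D,b)$ only rewrites the contents of some boxes (certain black stones, and their partners, become pluses) and leaves the underlying Ferrers shape — hence every transposition label $s_c$ — untouched. Reading the labels of all boxes of $b^{in}$ in a fixed reading order therefore produces literally the same word in $D$ and in $f(D,b)$, so $v^{f(D,b)}_{b^{in}} = v^{D}_{b^{in}}$. For (P2) I would first dispose of the degenerate cases: if $b$ is a plus or a white stone in $D$ it has no partner, and one checks there is no crossing/uncrossing pair of $f(D,b)$ with northwest box $b$ — automatically so if $b$ is a plus, since then its tile is an elbow, and, if $b$ is a white stone, by running Algorithm \ref{alg:partner} backwards, which shows such a pair would force $b$ to be black. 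So it remains to treat a black stone $b$, where what must be shown is that the output of Algorithm \ref{alg:partner} at $b$, when it exists, is exactly the box $p$ at which the two pipes of $f(D,b)$ meeting at $b$ last meet before reaching $b$, that these pipes cross at $p$ and uncross at $b$, and that when the algorithm returns ``no partner'' no such box exists.

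The heart of the argument is the translation claim: \emph{the paths $P_b$ and $Q_b$ that Algorithm \ref{alg:partner} constructs inside $D$ trace exactly the two pipe segments of $f(D,b)$ that enter $b$ from below and from the right}. Granting this, (P2) follows both ways. Forwards: $P_b$ and $Q_b$ first meet at the box the algorithm calls $p$; by the claim this is the first box southeast of $b$ where the two pipes re-meet, and since $p$ is required to be a white stone — a crossing tile in $f(D,b)$ — the pipes cross there, flow northwest without meeting again, and uncross at $b$, so $(p,b)$ is a crossing/uncrossing pair of $f(D,b)$. Backwards: any crossing/uncrossing pair of $f(D,b)$ with northwest box $b$ is recovered by the algorithm, because its construction is reversible (the observation used in the proof of Proposition \ref{prop:collecting_facts}(iv)); and the failure branches in Steps 3 and 5--8 are precisely the configurations where the two pipe segments escape along the boundary of the diagram or first re-meet at a plus or at a black stone, so that no crossing/uncrossing pair is completed.

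To prove the translation claim I would induct along $P_b$ (and symmetrically $Q_b$), tracking the current box and direction of travel against the pipe of $f(D,b)$. In $f(D,b)$ a pipe passes straight through any white stone that is not the partner of a black stone $\prec b$ (such stones are still crossing tiles), turns at a plus (an elbow), and — the only subtle point — at a black stone $c \prec b$ or at that stone's partner $p_c$ behaves exactly as Steps 5.3--5.6 prescribe. This last point is where the inductive hypothesis enters: in $f(D,b)$ both $c$ and $p_c$ have become pluses, so the crossing-at-$p_c$/uncrossing-at-$c$ of the relevant pair of pipes has been undone, and by (P2) for $c$ the pair $(p_c,c)$ really was a crossing/uncrossing pair in $f(D,c)$; undoing it reroutes a pipe entering $c$ to turn there, follow the now-clean staircase segment joining $c$ to $p_c$, turn again at $p_c$, and continue in the complementary direction — which is precisely ``jump to the partner and switch direction.'' One also needs that the additional flips made in going from $f(D,c)$ to $f(D,b)$ do not disturb this local picture; processing the flips in increasing $\prec$-order and invoking Proposition \ref{prop:collecting_facts}(ii) and (vi) (the partners of boxes incomparable to $c$ — and of boxes below $c$, already flipped — do not lie on $P_c$ or $Q_c$) should take care of it. The hard part will be exactly this verification: matching the six continuation rules and the ``no partner'' branches with the local behaviour of pipes after undoing the relevant crossing/uncrossing pairs, together with the bookkeeping of which further flips can land on the relevant staircases. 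By contrast (P1) and the reversibility half of (P2) are routine.
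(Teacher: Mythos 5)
Your plan for (P2) — the ``translation claim'' that $P_b$ and $Q_b$ trace the two pipes of $f(D,b)$ entering $b$, proved by induction along the paths using (P2) for boxes $c \prec b$ to justify the jump-to-partner steps and Proposition \ref{prop:collecting_facts}(ii),(vi) to rule out interference — is exactly the paper's argument, down to matching the six continuation rules against the local behaviour of pipes after undoing the already-partnered pairs. That half of the proposal is sound, though you leave the verification as a plan rather than carrying it out.

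The genuine gap is your treatment of (P1). You dismiss it as free on the grounds that $f(D,b)$ does not change the Ferrers shape, so ``reading the labels of all boxes of $b^{in}$'' gives the same word. But $v^{D}_{b^{in}}$ here is not the shape permutation: it is the permutation obtained by multiplying the transpositions labelling the boxes of $b^{in}$ that \emph{contain stones} in $D$ — the quantity appearing in the distinguished condition $\ell(v^{D}_{b^{in}} s_b) < \ell(v^{D}_{b^{in}})$ used in the proof of Theorem \ref{thm:characterization}, i.e.\ the $u_{b^{in}}u_b$ of Definition \ref{def:go-diagram}. Since $f(D,b)$ turns stones into pluses, the word read off genuinely shortens, and (P1) is the nontrivial assertion that the resulting group element is nevertheless unchanged. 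This is the substantive half of the lemma: it holds because each (black stone, partner) pair, undone in increasing $\prec$-order, is a crossing/uncrossing pair of the partially flipped diagram at the moment it is undone — which is precisely what your translation claim delivers — and undoing a crossing/uncrossing pair preserves the permutation. Your machinery for (P2) would prove (P1) as a byproduct, but as written your proposal asserts (P1) for the wrong reason and never establishes it; if (P1) really were about the shape permutation, the whole reduction in Theorem \ref{thm:characterization} from the distinguished condition to crossing/uncrossing pairs in $f(D,b)$ would not go through.
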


\begin{proof}
Let $b$ be a box in $D$ and suppose that properties (P1) and (P2) hold for all boxes in $b^{in}$ aside from $b$. Let $c$ be the box directly to the right of $b$ if such a box exists. We may flip all of the the black stones in $c^{in}$ and their partners to pluses to obtain a diagram $D'$ without changing the permutation $v_{c^{in}}$. At this point, we have flipped all black stones and their partners in $b^{in}$ aside from those in $b$'s column. We proceed to flip the black stones in this column and their partners starting from the bottom of the column.

Let $d$ be the lowest box containing a black stone in the same column as $b$. From point (vi) in Proposition \ref{prop:collecting_facts}, the only squares along the paths $P_d$ and $Q_d$ from $d$ to its partner which were flipped in passing to $D'$ are black stones in $d^{in}$ and partners of these stones. In the pipe dream of $f(D,d)$, follow the pipes coming out of the box $d$ down and to the right. The pipe going to the right turns downward at the first plus it encounters; such a plus could have come from either a plus or a black in the original diagram $D$. Likewise, the pipe going down from $d$ turns right at the first plus it encountered. Point 5.2 in Algorithm  \ref{alg:partner} and point (vi) in Proposition \ref{prop:collecting_facts} guarantee that, after these initial turns these pipes continue without turning until they meet at some square $e$. If $e$ contained a white stone in $D$, it still contains a white stone in $f(D,d)$. In this case, $d$ and $e$ were partnered in $D$ and they form a crossing/uncrossing pair in $f(D,d)$.

If the square $e$ contained a plus or black stone in $D$, it will contain a plus in $f(D,d)$. So, the pipes originating at $d$ will continue to travel down and right according to the rules:
\begin{itemize}
\item[1.] If they hit a plus that was a plus in $D$ while traveling right, switch from traveling right to traveling down.
\item[2.] If they hit a plus that was a plus in $D$ while traveling down, switch from traveling right to traveling right.
\item[3.] If they hit a plus that was a black stone in $D$, our inductive assumption tells us this plus was a crossing uncrossing pair with its partner. So, if they hit a plus that was a black stone in $D$ while traveling right, continue to its partner, then switch to traveling downward.
\item[4.] If they hit a plus that was a black stone in $D$ while traveling down, continue to its partner, then switch to traveling to the right.
\item[5.] If they hit a plus that was a white stone in $D$ while traveling right, switch from traveling right to traveling down. Proposition \ref{prop:collecting_facts} point (vi) guarantees such a white stone in $D$ had to be the partner of some square in $d^{in}$.
\item[6.] If they hit a plus that was a white stone in $D$ while traveling right, switch from traveling right to traveling down.
\end{itemize}
\noindent 
This list of rules agrees with points 5.1--5.6 in Algorithm \ref{alg:partner}. So, the pipes originating at $d$ next share a square at the same point that the paths $P_d$ and $Q_d$ from Algorithm \ref{alg:partner} meet. This square contains a white stone and is $d$'s partner. So, $d$ and $d$'s partner from $D$ form a crossing/uncrossing pair in $f(D,d)$. Then, flipping $d$ and $d$'s partner both to be pluses leaves the permutation unchanged.

Continuing in this way, we may flip all of the black stones in the same column as $b$ and their partners to pluses without altering the permutation. So, $v^{f(D,b)}_{b^{in}} = v^{D}_{b^{in}}$. In the case where $b$ contains a black stone, the same agrument as above shows that $b$ and its partner from $D$ form a crossing/uncrossing pair in $f(D,b)$.
\end{proof}

\begin{theorem} \label{thm:characterization}
A $\bullet/\circ/+$-diagram $D$ is a Go-diagram if and only if all boxes containing black stones have partners and all boxes with partners are filled with black stones. Changing all black stones and their partners to pluses simultaneously yields a reduced diagram for the pair of permutations determined by $D$.
\end{theorem}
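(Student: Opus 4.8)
The argument rests entirely on Lemma \ref{lem:inductive_step}, and my plan is first to distill it into a \emph{box-local equivalence}. Fix a box $b$ and suppose that for every $c \prec b$ the box $c$ contains a black stone exactly when it has a partner. Since $f(D,b)$ alters only boxes of $b^{in}$, the proof of Lemma \ref{lem:inductive_step} invokes this hypothesis only for boxes lying in $b^{in}$, so it applies and yields properties (P1) and (P2) at $b$. From (P2), $b$ has a partner if and only if $b$ is the uncrossing square of a crossing/uncrossing pair of $f(D,b)$: indeed Proposition \ref{prop:collecting_facts}(i) puts the partner in $b^{in}$, and since pipes in a pipe dream travel only to the northwest, the crossing of any crossing/uncrossing pair automatically lies southeast of the uncrossing, i.e.\ in $b^{in}$. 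Now choose a reading order in which $b^{in}\setminus b$ is read first, then $b$, then the rest of the diagram; the product of the stones read before $b$ in $f(D,b)$ is $u_{b^{in}}^{f(D,b)}u_b^{f(D,b)}$, and the proof of Lemma \ref{lem:inductive_step} (this is the content of (P1), read for the pipe-dream permutation rather than the shape permutation) gives $u_{b^{in}}^{f(D,b)}u_b^{f(D,b)} = u_{b^{in}}^{D}u_b^{D}$. Since ``$b$ is an uncrossing'' means precisely that appending $s_b$ to this product decreases length, we obtain, under the hypothesis on the boxes $c\prec b$, the box-local equivalence
\[
b \text{ has a partner} \iff \ell(u_{b^{in}}^{D}u_b^{D}s_b) < \ell(u_{b^{in}}^{D}u_b^{D}),
\]
whose right-hand side is exactly the condition of Definition \ref{def:go-diagram} at $b$.

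The characterization is then an induction along the partial order $\prec$. For the ``only if'' direction, assume $D$ is a Go-diagram and that every $c\prec b$ is black exactly when it has a partner; the box-local equivalence applies at $b$, and combining it with Definition \ref{def:go-diagram} at $b$ shows $b$ has a partner exactly when $b$ is black, closing the induction. For the ``if'' direction, assume every box of $D$ is black exactly when it has a partner; then the hypothesis of the box-local equivalence holds at every $b$, so for every $b$ the box $b$ is black if and only if $\ell(u_{b^{in}}^{D}u_b^{D}s_b) < \ell(u_{b^{in}}^{D}u_b^{D})$, which is the definition of a Go-diagram.

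For the final sentence, let $D$ be a Go-diagram and let $b_0$ be its top-left box, so $b_0^{in}=D$. Changing every black stone and its partner to a plus produces $f(D,b_0)$ when $b_0$ is not a black stone, and otherwise produces the diagram obtained from $f(D,b_0)$ by one further flip of $b_0$ and its partner; call the result $g(D)$. The permutation $v$ is read off the Ferrers shape and is unchanged. For the pipe-dream permutation, the proof of Lemma \ref{lem:inductive_step} shows $u_{b_0^{in}}^{f(D,b_0)} = u_{b_0^{in}}^{D}$, so $f(D,b_0)$ has the same $u$ as $D$; and when $b_0$ is black, (P2) says $b_0$ and its partner form a crossing/uncrossing pair of $f(D,b_0)$, and undoing a crossing/uncrossing pair does not change the permutation read off the pipe dream. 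Hence $g(D)$ determines the same pair of permutations as $D$. Finally $g(D)$ is a $\circ/+$-diagram whose white stones are the white stones of $D$ other than the partners of black stones (one partner per black stone, all distinct by Proposition \ref{prop:collecting_facts}(iv)), so it has $\#(\circ\text{ in }D)-\#(\bullet\text{ in }D)$ of them; by Proposition \ref{prop:length_from_filling} this number is $\ell(u)$, and a $\circ/+$-diagram whose stone word has $\ell(u)$ letters is reduced. Thus $g(D)$ is a reduced diagram for the pair of permutations of $D$.

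The main obstacle is getting the box-local equivalence exactly right. Two points need care: that the proof of Lemma \ref{lem:inductive_step} really only uses its hypothesis for the boxes of $b^{in}$ (so that in the ``only if'' induction it can be fed the inductive hypothesis instead of the conclusion being proved), and that the crossing/uncrossing pair furnished by (P2) always has $b$ as its \emph{uncrossing} square, so that the length inequality comes out with the sign matching Definition \ref{def:go-diagram}; the latter is what the remark about pipes flowing only northwest secures. The remaining ingredients — that undoing a crossing/uncrossing pair preserves the pipe-dream permutation, and the count of white stones of $g(D)$ — are routine given Propositions \ref{prop:length_from_filling} and \ref{prop:collecting_facts}.
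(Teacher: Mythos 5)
Your proof is correct and follows the same route as the paper's: establish (P1) and (P2) box by box via Lemma \ref{lem:inductive_step} and an induction along $\prec$, then translate the crossing/uncrossing condition in $f(D,b)$ into the length inequality of Definition \ref{def:go-diagram}. In two respects you are actually more careful than the published argument: you make explicit that Lemma \ref{lem:inductive_step} only consumes its ``black iff partnered'' hypothesis on boxes of $b^{in}$ (without which the inductive appeal to the lemma would be circular, since that hypothesis is part of what is being proved), and you supply a proof of the final sentence on reducedness via Proposition \ref{prop:length_from_filling}, which the paper's proof never addresses.
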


\begin{proof}
Let $D$ be a Go-diagram and let $b$ be a box containing a black stone such that no box in $b^{in}$ contains a black stone. Since there are no black stones in $b^{in}$, property (P1) holds for $b$. Consider the diagram obtained by restricting $D$ to the subdiagram $b^{in} \cup b$ and replacing the black stone in $b$ with a plus. This diagram contains no black stones, and it is not a \Le-diagram. So, it contains some subdiagram violating the \Le-condition, of the form (\ref{eqn:le-violation}). Necessarily, $b$ is the top left corner of this subdiagram. Then, the bottom right corner of this subdiagram is $b$'s partner in $D$. Evidently, these two squares also form a crossing/uncrossing pair. So, property (P2) holds for $b$. Then, applying Lemma \ref{lem:inductive_step} inductively, properties (P1) and (P2) hold for all squares in $D$.

Now, let $b$ be any square in $b$. We want to show that $b$ contains a black stone if and only if it has a partner in $D$. From the distinguished property, $b$ contains a black stone if and only if
\begin{equation} \label{eqn:distinguished}
\ell\left(v^{D}_{b^{in}} s_b\right) < \ell\left(v^{D}_{b^{in}}\right).
\end{equation}
Then, (P1) says that (\ref{eqn:distinguished}) holds if and only if
\begin{displaymath}
\ell\left(v^{f(D,b)}_{b^{in}} s_b\right) < \ell\left(v^{f(D,b)}_{b^{in}}\right).
\end{displaymath}
This inequality holds if and only if $b$ forms a crossing/uncrossing pair with some box $p$ in $f(D,b)$. Property (P2) says that $b$ forms a crossing/uncrossing pair with $p$ if and only if $p$ is $b$'s partner in $D$.

Now, suppose $D$ is a diagram such that a square contains a black stone if and only if it has a partner. Let $b$ be a box in $D$ such that no boxes in $b^{in}$ contain black stones. Then, from point (v) in Proposition \ref{prop:collecting_facts}, $b$'s partner is defined by a diagram of the following form.
\begin{displaymath}
\begin{tikzpicture}
\begin{scope}[scale=.75]
\draw (0,0) -- (0,5) -- (6,5) -- (6,0) -- (0,0);
\draw (1,0) -- (1,5);
\draw (0,4) -- (6,4);
\draw (5,5) -- (5,0);
\draw (6,1) -- (0,1);
\draw (0,2) -- (1,2);
\draw (0,3) -- (1,3);
\draw (2,5) -- (2,4);
\draw (4,5) -- (4,4);
\draw (5,3) -- (6,3);
\draw (5,2) -- (6,2);
\draw (4,0) -- (4,1);
\draw (2,0) -- (2,1);

\draw[thick] (.5,.2) -- (.5,.8);
\draw[thick] (.2,.5) -- (.8,.5);
%
\draw (.5,4.5) node {$b$};
%
\draw[thick] (5.5,4.2) -- (5.5,4.8);
\draw[thick] (5.2,4.5) -- (5.8,4.5);
%
\draw[fill = white] (5.5,.5) circle (.25);
%
\draw[fill = white] (.5,1.5) circle (.25);
\draw[fill = white] (.5,3.5) circle (.25);
\draw[fill = white] (1.5,.5) circle (.25);
\draw[fill = white] (1.5,4.5) circle (.25);
\draw[fill = white] (4.5,4.5) circle (.25);
\draw[fill = white] (4.5,.5) circle (.25);
\draw[fill = white] (5.5,1.5) circle (.25);
\draw[fill = white] (5.5,3.5) circle (.25);
%
\draw (3,.5) node {$\dots$};
\draw (3,4.5) node {$\dots$};
\draw (.5,2.65) node {$\vdots$};
\draw (5.5,2.65) node {$\vdots$};
\draw (6.2,0) node {$,$};
\end{scope}
\end{tikzpicture}
\end{displaymath}
\noindent
where the interior of the diagram could be filled with anything. So, $b$ and its partner form a crossing/uncrossing pair. Then, applying Lemma \ref{lem:inductive_step} inductively, properties (P1) and (P2) hold for all squares in $D$.

Now, let $b$ be any box in $D$. To verify that $D$ is a Go-diagram we must check that $b$ contains a black stone if and only if (\ref{eqn:distinguished}) holds for $b$. Since (P1) holds for every box in $D$, $b$ satisfies (\ref{eqn:distinguished}) if and only if 
\begin{displaymath}
\ell\left(v^{f(D,b)}_{b^{in}} s_b\right) < \ell\left(v^{f(D,b)}_{b^{in}}\right).
\end{displaymath}
\noindent
This condition holds if and only if $b$ forms a crossing/uncrossing pair with some box $p$ in $f(D,b)$. Then, Property (P2) says that $b$ forms a crossing/uncrossing pair with $p$ in $f(D,b)$ if and only if $p$ is $b$'s partner in $D$. From our assumption, $b$ has a partner in $D$ if and only $b$ contains a black stone.
\end{proof}

Theorem \ref{thm:characterization} may be used to give intriguing partial lists of forbidden subdiagrams for the class of Go-diagrams. 
Though Theorem \ref{thm:sad_injection} demonstrates that there is no finite characterization of Go-diagrams in terms of forbidden subdiagrams, such tests can still be valuable as a quick reality check for whether a diagram is or is not a Go-diagram.

\begin{corollary} \label{cor:forbidden_go_diagram}
Any Go-diagram avoids subdiagrams of the form
%

\begin{equation} \label{eqn:forbidden_go_diagram}
\begin{tikzpicture}
\begin{scope}[scale=.75]
\draw (0,0) -- (0,4) -- (6,4) -- (6,0) -- (0,0);
\draw (1,0) -- (1,1) -- (0,1);
\draw (0,3) -- (1,3) -- (1,4);
\draw (5,4) -- (5,3) -- (6,3);
\draw (6,1) -- (5,1) -- (5,0);
%
%
\draw (0,0) -- (1,1);
\draw[fill = black] (.3,.7) circle (.18);
\draw[thick] (.7,.1) -- (.7,.5);
\draw[thick] (.5,.3) -- (.9,.3);
%
\begin{scope}[shift={(0,3)}]
\draw (0,0) -- (1,1);
\draw[fill = white] (.3,.7) circle (.18);
\draw[thick] (.7,.1) -- (.7,.5);
\draw[thick] (.5,.3) -- (.9,.3);
\end{scope}
%
\begin{scope}[shift={(5,3)}]
\draw (0,0) -- (1,1);
\draw[fill = black] (.3,.7) circle (.18);
\draw[thick] (.7,.1) -- (.7,.5);
\draw[thick] (.5,.3) -- (.9,.3);
\end{scope}
%
\draw[fill = white] (5.5,.5) circle (.25);
\draw (3,2) node {white stones};
\draw (6.2,0) node {$,$};
\end{scope}
\end{tikzpicture}
\end{equation}
\noindent
where the boxes with slashes in them indicate that the box could be filled with the items on either side of the slash.
\end{corollary}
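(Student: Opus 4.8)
\emph{Proof approach.} I will argue by contradiction, using Theorem~\ref{thm:characterization} together with the partner construction of Algorithm~\ref{alg:partner}. Suppose $D$ is a Go-diagram that contains a subdiagram $R$ of the form~(\ref{eqn:forbidden_go_diagram}), and let $a$ be the top-left box of $R$, which is filled with a white stone or a plus. Since $D$ is a Go-diagram, Theorem~\ref{thm:characterization} tells us that every black stone of $D$ has a partner, so the inductive hypothesis needed to run Algorithm~\ref{alg:partner} at $a$ is satisfied. The plan is to show that the algorithm produces a partner for $a$; since $a$ is not a black stone, this contradicts Theorem~\ref{thm:characterization}, which asserts that in a Go-diagram a box has a partner only if it contains a black stone.

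\emph{Key steps.} First I observe that everything needed to run Algorithm~\ref{alg:partner} at $a$ is local to $R$: because $a$ is the $\preccurlyeq$-maximal box of $R$ we have $R\subseteq a^{in}$, and the right-trace and down-trace from $a$ in Step~2, together with the rectangle scanned in Step~3, all stay inside $R$. Concretely, every box of $R$ other than its four corners is a white stone, so tracing right from $a$ in its row first meets a black stone or plus exactly at the top-right corner $c$ of $R$, and tracing down from $a$ in its column first meets a black stone or plus exactly at the bottom-left corner $d$ of $R$ --- this is precisely where the ``black stone or plus'' slashes at those two corners are used. In particular such boxes exist, so Step~1 does not declare $a$ partnerless. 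Next, the box $e=(i_d,j_c)$ is precisely the bottom-right corner of $R$: it exists and contains a white stone, and the boxes strictly between $a$ and $e$ along the bottom row and the right column of $R$ are all white stones, so none of the obstructions 3.1--3.3 occurs. Hence Step~4 applies and names $e$ as the partner of $a$, giving the desired contradiction; therefore no Go-diagram contains a subdiagram of the form~(\ref{eqn:forbidden_go_diagram}).

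\emph{Main obstacle.} I do not expect a serious difficulty here, as the entire content is packaged in Theorem~\ref{thm:characterization} and Algorithm~\ref{alg:partner}. The one point that must be handled with care is choosing to analyze the $\preccurlyeq$-maximal corner $a$ rather than, say, the white-stone corner $e$: it is exactly this choice that makes the traces and scans of Algorithm~\ref{alg:partner} depend only on the local data of $R$, so that the rest of $D$ plays no role beyond ensuring (via Theorem~\ref{thm:characterization}) that the algorithm is well-defined on $a$. Once this is in place, each of the eight admissible fillings of the slashed boxes --- white-or-plus at $a$, black-or-plus at $c$ and at $d$, white at $e$, white elsewhere --- is seen to trigger Step~4 and yield the contradiction, and the argument is complete.
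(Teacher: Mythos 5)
Your proof is correct and follows essentially the same route as the paper: both apply Step 4 of Algorithm \ref{alg:partner} at the top-left corner of the pattern to produce a partner for a box that is not a black stone, contradicting Theorem \ref{thm:characterization}. The paper's write-up additionally observes that the interior of the rectangle is irrelevant to this argument (only the boundary row/column traces matter) and then shows that any such unspecified-interior violation contains an all-white-interior one of the form (\ref{eqn:forbidden_go_diagram}); that extra reduction strengthens the corollary's usefulness as a test but is not needed for the literal statement, so your omission of it is not a gap.
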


\begin{proof}
From Theorem \ref{thm:characterization} and Point 4 in Algorithm \ref{alg:partner}, it is obvious a Go diagram must avoid subdiagrams of the form

\begin{equation} \label{eqn:obvious_forbidden_diagram}
\begin{tikzpicture}
\begin{scope}[scale=.75]
\draw (0,0) -- (0,5) -- (6,5) -- (6,0) -- (0,0);
\draw (1,0) -- (1,5);
\draw (0,4) -- (6,4);
\draw (5,5) -- (5,0);
\draw (6,1) -- (0,1);
\draw (0,2) -- (1,2);
\draw (0,3) -- (1,3);
\draw (2,5) -- (2,4);
\draw (4,5) -- (4,4);
\draw (5,3) -- (6,3);
\draw (5,2) -- (6,2);
\draw (4,0) -- (4,1);
\draw (2,0) -- (2,1);

\draw (0,0) -- (1,1);
\draw[fill = black] (.3,.7) circle (.18);
\draw[thick] (.7,.1) -- (.7,.5);
\draw[thick] (.5,.3) -- (.9,.3);
%
\begin{scope}[shift={(0,4)}]
\draw (0,0) -- (1,1);
\draw[fill = white] (.3,.7) circle (.18);
\draw[thick] (.7,.1) -- (.7,.5);
\draw[thick] (.5,.3) -- (.9,.3);
\end{scope}
%
\begin{scope}[shift={(5,4)}]
\draw (0,0) -- (1,1);
\draw[fill = black] (.3,.7) circle (.18);
\draw[thick] (.7,.1) -- (.7,.5);
\draw[thick] (.5,.3) -- (.9,.3);
\end{scope}
%
\draw[fill = white] (5.5,.5) circle (.25);
%
\draw[fill = white] (.5,1.5) circle (.25);
\draw[fill = white] (.5,3.5) circle (.25);
\draw[fill = white] (1.5,.5) circle (.25);
\draw[fill = white] (1.5,4.5) circle (.25);
\draw[fill = white] (4.5,4.5) circle (.25);
\draw[fill = white] (4.5,.5) circle (.25);
\draw[fill = white] (5.5,1.5) circle (.25);
\draw[fill = white] (5.5,3.5) circle (.25);
%
\draw (3,.5) node {$\dots$};
\draw (3,4.5) node {$\dots$};
\draw (.5,2.65) node {$\vdots$};
\draw (5.5,2.65) node {$\vdots$};
\draw (6.2,0) node {$,$};
\end{scope}
\end{tikzpicture}
\end{equation}
\noindent
where it is not specified what the interior of the diagram is filled with. We show the existence of such a subdiagram $D$ implies the existence of a diagram of the form (\ref{eqn:forbidden_go_diagram}). Suppose there is a plus or a black stone in the interior of this diagram. Choose a plus or black stone in a box $b$ in the interior of $D$ such that there are no interior pluses or black stones to the right of $b$. Now, let $c$ be the highest box in the same column as $b$ containing a black stone or plus. Then, the square whose bottom left corner is $c$ and whose top right corner is the top right corner is the top right corner of $D$ is a subdiagram of the form (\ref{eqn:forbidden_go_diagram}).
\end{proof}

\end{document}